\documentclass[reqno, 11pt]{amsart} 

\usepackage[utf8]{inputenc}
\usepackage[T1]{fontenc}

\DeclareUnicodeCharacter{03A6}{\ensuremath{\Phi}}
\DeclareUnicodeCharacter{03B1}{\ensuremath{\alpha}}
\DeclareUnicodeCharacter{03B2}{\ensuremath{\beta}}
\DeclareUnicodeCharacter{03B3}{\ensuremath{\gamma}}
\DeclareUnicodeCharacter{03C0}{\ensuremath{\pi}}
\DeclareUnicodeCharacter{03C8}{\ensuremath{\psi}}
\DeclareUnicodeCharacter{03C9}{\ensuremath{\omega}}
\DeclareUnicodeCharacter{03B6}{\ensuremath{\zeta}}
\DeclareUnicodeCharacter{03C3}{\ensuremath{\sigma}}
\DeclareUnicodeCharacter{03C6}{\ensuremath{\phi}}
\DeclareUnicodeCharacter{03A8}{\ensuremath{\Psi}}
\DeclareUnicodeCharacter{0394}{\ensuremath{\Delta}}
\DeclareUnicodeCharacter{03B7}{\ensuremath{\eta}}
\DeclareUnicodeCharacter{03BC}{\ensuremath{\mu}}
\DeclareUnicodeCharacter{03C1}{\ensuremath{\rho}}
\DeclareUnicodeCharacter{03BB}{\ensuremath{\lambda}}
\DeclareUnicodeCharacter{03B9}{\ensuremath{\iota}}
\DeclareUnicodeCharacter{03BE}{\ensuremath{\xi}}
\DeclareUnicodeCharacter{03C4}{\ensuremath{\tau}}
\DeclareUnicodeCharacter{03C7}{\ensuremath{\chi}}
\DeclareUnicodeCharacter{03B5}{\ensuremath{\varepsilon}}
\DeclareUnicodeCharacter{03B4}{\ensuremath{\delta}}
\DeclareUnicodeCharacter{2202}{\ensuremath{\partial}}

\usepackage[english]{babel}

\usepackage[margin=1.2in]{geometry}
\usepackage{varwidth}
\usepackage{enumitem}
\usepackage{xcolor}

\usepackage{soul} 

\usepackage{amsmath, amsthm, amssymb}
\usepackage[all]{xy}

\usepackage[bookmarks = true]{hyperref}

\usepackage{cleveref}
\usepackage{tikz-cd}

\usepackage{mathtools}

\usepackage{listings}
\usepackage{textcomp} 

\tikzset{
symbol/.style={
draw=none,
every to/.append style={
edge node={node [sloped, allow upside down, auto=false]{$#1$}}}
}
}

\numberwithin{equation}{section}

\theoremstyle{plain}
\newtheorem{theorem}{Theorem}[section]
\newtheorem{lemma}[theorem]{Lemma}
\newtheorem{corollary}[theorem]{Corollary}
\newtheorem{proposition}[theorem]{Proposition}
\newtheorem{conjecture}[theorem]{Conjecture}
\newtheorem{question}[theorem]{Question}

\theoremstyle{definition}
\newtheorem{definition}[theorem]{Definition}

\theoremstyle{remark}
\newtheorem{remark}[theorem]{Remark}
\newtheorem{example}[theorem]{Example}

\newcommand{\overbar}[1]{\mkern 1.5mu\overline{\mkern-1.5mu#1\mkern-1.5mu}\mkern 1.5mu}

\newcommand{\ov}{\overline}

\newcommand{\mr}{\mathrm}
\newcommand{\wt}{\widetilde}

\newcommand{\tr}{\mathrm{tr}}

\newcommand{\diag}{\operatorname{diag}}

\newcommand{\Herm}{\mr{Herm}}
\newcommand{\Sp}{\operatorname{Sp}}

\newcommand{\tensor}{\otimes}

\newcommand{\iso}{\cong}

\newcommand{\lr}{\longrightarrow}

\newcommand{\mbA}{\mathbb{A}}

\newcommand{\mbC}{\mathbb{C}}

\newcommand{\mbR}{\mathbb{R}}

\newcommand{\mbZ}{\mathbb{Z}}

\newcommand{\mcA}{\mathcal{A}}

\newcommand{\mcC}{\mathcal{C}}
\newcommand{\mcD}{\mathcal{D}}

\newcommand{\mcF}{\mathcal{F}}
\newcommand{\mcG}{\mathcal{G}}

\newcommand{\mcP}{\mathcal{P}}
\newcommand{\mcQ}{\mathcal{Q}}

\newcommand{\mcS}{\mathcal{S}}
\newcommand{\mcT}{\mathcal{T}}

\newcommand{\mfA}{\mathfrak{A}}

\newcommand{\mfF}{\mathfrak{F}}

\newcommand{\mfb}{\mathfrak{b}}

\newcommand{\mfg}{\mathfrak{g}}

\newcommand{\mfl}{\mathfrak{l}}
\newcommand{\mfm}{\mathfrak{m}}

\newcommand{\mfs}{\mathfrak{s}}
\newcommand{\mft}{\mathfrak{t}}
\newcommand{\mfu}{\mathfrak{u}}

\newcommand{\Sym}{\mr{Sym}}
\newcommand{\Skew}{\mr{Skew}}

\newcommand{\rs}{\mr{rs}}

\newcommand{\mfgl}{\mfg\mfl}
\newcommand{\mfsl}{\mfs\mfl}
\newcommand{\mfsu}{\mfs\mfu}
\newcommand{\simto}{\overset{\sim}{\to}}
\newcommand{\simlr}{\overset{\sim}{\lr}}

\newcommand{\bbA}{\mathbb A}

\newcommand{\bbC}{\mathbb C}

\newcommand{\bbR}{\mathbb R}

\newcommand{\calA}{\mathcal A}

\newcommand{\calF}{\mathcal F}
\newcommand{\calG}{\mathcal G}

\newcommand{\calQ}{\mathcal Q}

\newcommand{\lie}[1]{ \mathfrak{#1}}

\newcommand{\isomto}{\stackrel{\sim}{\longrightarrow}}

\DeclareMathOperator{\Spec}{Spec}

\DeclareMathOperator{\SL}{SL}
\DeclareMathOperator{\GL}{GL}
\DeclareMathOperator{\SO}{SO}

\DeclareMathOperator{\End}{End}

\DeclareMathOperator{\Aut}{Aut}

\DeclareMathOperator{\Mp}{Mp}

\newcommand{\Orb}{\mathrm{Orb}}

\newcommand{\inv}{\mathrm{inv}}

\newcommand{\mn}{\medskip \noindent}

\title{Jacquet--Rallis transfer for $\GL_2$}

\date{\today}

\author{Andreas Mihatsch}
\address{School of Mathematical Sciences, Zhejiang University, 866 Yuhangtang Rd, Hangzhou, 310058, P. R. China.}
\email{mihatsch@zju.edu.cn}

\author{Siddarth Sankaran}
\address{Department of Mathematics, University of Manitoba, Winnipeg, Manitoba, Canada.}
\email{siddarth.sankaran@umanitoba.ca}

\author{Tonghai Yang}
\address{Department of Mathematics, University of Wisconsin, Madison, WI 53706, USA.}
\email{thyang@math.wisc.edu}

\date{\today}

\begin{document}

\begin{abstract}
We study archimedean smooth transfer for the Jacquet--Rallis relative trace formula comparison,  in particular, identities between orbital integrals on $\GL_n$ and its unitary forms. We work with Lie algebras and a $(\mfg, K)$-module setting. Our main result states that for $n = 2$, meaning $\GL_2$ acting on $\mfgl_3$, every polynomial type Schwartz function has a transfer to the unitary side and vice versa. Our proof relies on the Weil representation and the study of invariant differential operators. It also suggests certain structural properties of the $(\mfg, K)$-modules in question which we formulate as conjectures.
\end{abstract}

\maketitle
\tableofcontents

\section{Introduction}

In this paper, we study archimedean smooth transfer for the Jacquet--Rallis relative trace formula comparison. We work with Lie algebras and a $(\mfg, K)$-module setting. Our main theorem states that for $n = 2$, meaning $\GL_2$ acting on $\mfgl_3$, every polynomial type Schwartz function has a transfer to the unitary side and vice versa. We begin by formulating this result more precisely.

\subsection{Main result}
We first recall the general setting which was introduced by Jacquet--Rallis \cite{JR}. We view $\GL_n$ embedded in $\GL_{n+1}$ via $g\mapsto \left(\begin{smallmatrix} g & \\ & 1\end{smallmatrix}\right)$, which in turn acts by conjugation on $\lie{gl}_{n+1}$. For an $n$-dimensional hermitian $\mbC$-vector space $V$, we similarly consider $U(V)$ acting by conjugation on $\mfu(V\oplus \mbC)$. The categorical quotients of these representations can both be identified with $\mcQ' := \mbA^{2n+1}_\mbR$. Taking $\mbR$-points, this leads to the \emph{matching bijection} for regular semi-simple orbits,
\begin{equation}\label{eq:intro_matching}
[\mfgl_{n+1}(\mbR)_\rs] \ = \ \mcQ'(\mbR)_\rs\ =\ \coprod_{p+q = n} [\mfu(V_{p,q}\oplus \mbC)(\mbR)_\rs],
\end{equation}
where $V_{p,q}$ is a choice of hermitian space with signature $(p,q)$. We write $\mcS(W)$ for the space of Schwartz functions on a real vector space $W$. Given any $Φ\in \mcS(\mfgl_{n+1}(\mbR))$, its $\GL_n(\mbR)$-orbital integral is a smooth function
$$\Orb(-,Φ) \in \mcC^\infty(\mcQ'(\mbR)_\rs).$$
Similarly, the $U(V)(\mbR)$-orbital integral of some $Ψ \in \mcS(\mfu(V\oplus \mbC)(\mbR))$ defines a smooth function
$$\Orb(-,Ψ) \in \mcC^\infty(\mcQ'(\mbR)_\rs)$$
by extending it as zero to the complement of $[\mfu(V\oplus \mbC)(\mbR)_\rs]$ in \eqref{eq:intro_matching}. We say a function $Φ\in \mcS(\lie{gl}_{n+1}(\bbR))$ and a tuple  $(Ψ_{p,q})_{p+q = n}$, where $Ψ_{p,q}\in \mcS(\mfu(V_{p,q}\oplus \mbC)(\mbR))$, are \emph{transfers} if $\Orb(-,Φ) =\sum \Orb(-,Ψ_{p,q})$. The smooth transfer conjecture from \cite{JR}, \cite[\S3.1]{Zhang_GGP} or \cite[Conjecture 3.1]{Xue} states that every $Φ$ should have a transfer $(Ψ_{p,q})_{p+q = n}$ and vice versa:
\begin{conjecture}\label{conj:intro_transfer_all_Schwartz}
There is an equality of spaces of smooth functions on $\mcQ'(\mbR)_\rs$,
$$\Orb\big(-,\mcS(\mfgl_{n+1}(\mbR))\big) = \bigoplus_{p + q = n} \Orb\big(-, \mcS(\mfu(V_{p,q}\oplus \mbC)(\mbR))\big).$$
\end{conjecture}
The strongest known general statement about this conjecture, due to Hang Xue \cite[Theorem 3.3]{Xue}, states that on each side, the set of functions admitting transfers is dense. The analog of Conjecture \ref{conj:intro_transfer_all_Schwartz} in the non-archimedean setting is a theorem due to W. Zhang \cite{Zhang_GGP}. These two (purely local) results play an important role in the proof of the global Gan--Gross--Prasad Conjecture \cite{Zhang_GGP, Xue, BLZZ, BCZ} which is also the application that motivated \cite{JR}.

In this paper, we take a $(\mfg, K)$-module perspective by restricting to polynomial type Schwartz functions as described below. Our results suggest that this is a natural perspective from the representation theory point of view. We also expect that it will prove useful for the study of derivatives of orbital integrals, extending results on ``partial Gaussian test functions'' of Wei Zhang \cite{Zhang_AFL}. This should ultimately expand the arithmetic intersection theory from \cite{Zhang_AFL, MZ, ZZ, LMZ} in the complex multiplication cycle direction.

We endow $\mfgl_{n+1}$ with the quadratic form $Y\mapsto \tr(Y^2)$. The standard choice of Siegel Gaussian for this form is
$$φ(Y) := \exp(-2π\tr(YY^t)) \ \in \ \mcS(\mfgl_{n+1}(\mbR)),$$
leading us to consider the space of \emph{polynomial type} Schwartz functions
$$\mcS'_n := \mbC[\mfgl_{n+1}]\cdot φ.$$
A similar construction on the unitary side defines spaces $\mcT'_{p,q}$ of polynomial type Schwartz functions on $\mfu(V_{p,q}\oplus \mbC)(\mbR)$. We conjecture that transfer is already defined at the level of polynomial Schwartz functions:
\begin{conjecture}[see Conjecture \ref{conj:polynomial_Schwartz}]\label{conj:intro_transfer_poly_Schwartz}
There is an equality of spaces of smooth functions on $\mcQ'(\mbR)_\rs$
$$\Orb\big(-,\mcS'_n\big) = \bigoplus_{p + q = n} \Orb\big(-, \mcT'_{p,q}\big).$$
\end{conjecture}
The main result of this article is the following theorem:
\begin{theorem}\label{thm:intro_main}
Conjecture \ref{conj:intro_transfer_poly_Schwartz} holds when $n \leq 2$.
\end{theorem}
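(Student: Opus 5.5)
The plan is to reduce both sides of Conjecture \ref{conj:intro_transfer_poly_Schwartz} to the same explicit space of functions on $\mcQ'(\mbR)_\rs$, using invariant differential operators and the Weil representation.

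\textbf{Case $n=0$.} Here there is nothing to integrate and both sides are polynomial times Gaussian on $\mcQ'(\mbR) = \mbR$. The two Gaussians agree under the matching, so the statement is immediate.

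\textbf{General strategy for $n = 1, 2$.} First, project to invariants. Orbital integrals factor through the $K$-averaging (for $K = O(n)$, resp.\ $U(p)\times U(q)$), so only the $K$-invariant part of $\mcS'_n$ and of $\mcT'_{p,q}$ matters. Second, decompose $\mfgl_{n+1} = \mfgl_n \oplus \mbR^n \oplus \mbR^n \oplus \mbR$, and similarly on the unitary side. The Gaussian $φ$ is then a product of Gaussians on these pieces, and the polynomial factors split accordingly.

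Third, use the Weil representation of the dual pair attached to the quadratic forms. These are $\tr(Y^2)$ restricted to $\mfgl_n$ or $\mfu(V)$, together with the pairing on $\mbR^n\times\mbR^n$ (resp.\ the hermitian form on $V$). The aim is to show that the Lie algebra of the dual group acts on $\mcS'_n$ and $\mcT'_{p,q}$ by operators commuting with the $\GL_n$- resp.\ $U(V)$-action. These operators are built from multiplication by invariant polynomials and from invariant constant-coefficient differential operators such as $\Delta$. Consequently they descend to explicit differential operators $D$ on $\mcC^\infty(\mcQ'(\mbR)_\rs)$ with
$$\Orb(-, D\Phi) = D\,\Orb(-,\Phi).$$
Together with multiplication by the invariant polynomials $\mbC[\mcQ']$, these generate an algebra $\mcA$. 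Both images $\Orb(-,\mcS'_n)$ and $\bigoplus\Orb(-,\mcT'_{p,q})$ are $\mcA$-modules.

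Fourth, show that each side is generated as an $\mcA$-module by finitely many explicit functions, namely orbital integrals of low-degree $K$-invariant polynomials times $φ$. For $n\le 2$ the $K$-types involved are few. I would establish generation by a degree filtration argument: modulo $\mcA$-images of lower-degree elements, every invariant polynomial reduces to a finite list of generators.

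Finally, compute the orbital integrals of these generators explicitly. For $n=1$ they are essentially one-variable integrals: $\GL_1$ acting on $\mbR^2$ by $(t,t^{-1})$ gives Bessel/$K_0$-type functions. For $U(1)$ they are indicator functions on the matching regions times a Gaussian. The task is then to match the generators on the two sides as combinations with $\mcA$-coefficients, using the known transfer of the Gaussian itself (the ``Gaussian transfers to Gaussians'' identity, proved via the Weil representation as in Zhang's partial Gaussian computations).

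\textbf{Main obstacle.} I expect the hardest step to be $n=2$, specifically the generation and matching statement in step four. Orbital integrals on $\GL_2$ have logarithmic singularities along the non-regular locus, while unitary orbital integrals are supported in signature regions. So one must show that the $\mcA$-module generated by the $\GL_2$ generators produces exactly the correct jump and log behaviour and nothing more, and conversely.

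My first approach would be to characterize both images as the set of $\mcA$-finite functions satisfying the same holonomic system of differential equations with prescribed local expansions (germs) along the discriminant divisor. I would then check that the two sides have the same dimension of solutions near each wall, using explicit hypergeometric-type computations in the two relevant variables.
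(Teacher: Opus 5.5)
\textbf{The gap: no proof of the signature $(1,1)$ transfer.} Your architecture matches the paper's. It uses the algebra generated by the two infinitesimal Weil representations and the invariant polynomials (the paper's $\mfA$), finite generation of the coinvariants by a degree filtration, matching of generators, and compatibility of transfer with $\mfA$. The gap is your final step, where the real content of $n=2$ lies. The unitary sides are cyclic, generated by the Gaussians $\psi_{p,q}$. The $\GL_2$-side is generated by $\Orb(-,\Phi_{2,0})$, $\Orb(-,\Phi_{1,1})$ and $\Orb(-,\Phi_{0,2})$, where $\Phi_{1,1}=(y_{12}-y_{21})\varphi$. The definite signatures are covered by Theorem~\ref{thm:pos_def}. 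The identity $\Orb(-,\Phi_{1,1})=c\cdot\Orb(-,\psi_{1,1})$ on $\mcQ_{\rs,(1,1)}$ is new, and you give no way to prove it.

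The ``Gaussian transfers to Gaussians'' identity you invoke is not available, and is in fact false as stated. The Gaussian $\varphi$ is $O(n)$-invariant while $\eta$ is nontrivial on $O(n)$, so $\Orb(-,\varphi)=0$, whereas $\Orb(-,\psi_{p,q})\neq 0$. ``Compute explicitly'' also does not work head-on: the two sides are a triple and a double integral in $(\alpha,\beta,\gamma)$ that are hard to identify directly. The paper instead proceeds in three steps.
(i) It evaluates both sides on the hyperplane $\{\beta=0\}$, where they reduce to the same one-variable integral.
(ii) It notes both are annihilated by $\omega(0,H)\vert_{\mcQ}$, since the $\mbR^2\times\mbR_2$- and $V_{1,1}$-components are Siegel Gaussians of weight $0$ in signature $(2,2)$. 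It then shows (Proposition~\ref{prop:unique_extension}) that a solution which is analytic in $\beta$, and whose $\beta$-derivatives decay rapidly in $\gamma$, is determined by its restriction to $\{\beta=0\}$. For a solution vanishing on $\{\beta=0\}$, induction gives $\gamma\partial_\gamma f^{(k)}=-\tfrac{k+1}{2}f^{(k)}$ for $f^{(k)}=\partial_\beta^k f\vert_{\beta=0}$, which is incompatible with rapid decay unless $f^{(k)}=0$.
(iii) It proves analyticity in $\beta$. This is direct on the unitary side. On the $\GL_2$-side it uses H\"ormander's analytic regularity theorem for four invariant operators annihilating $\Orb(-,\Phi_{1,1})$ whose principal symbols have no common nonzero real zero.

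\textbf{Misplaced obstacle and the $\eta$-twist.} Your stated main obstacle is misplaced. The generation statements are purely algebraic; on the $\GL_2$-side they need, beyond the $\mfgl_2$-operators and the Weil representations, the weight components of the cubic invariant $16\,wyv$. Once three matched pairs of generators are in hand, no analysis of logarithmic singularities or jumps along the discriminant is needed. Your alternative, characterizing the full images by a holonomic system with prescribed germs, is not precise enough to check. The images are infinite-dimensional $\mfA$-modules, not solution spaces of one system, and a workable version would have to be run on the single generator $\Orb(-,\Phi_{1,1})$, which is essentially the argument above.

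The same $\eta$-twist also affects your first step. On the $\GL_n$-side the orbital integral factors through the $(O(n),\eta)$-isotypic quotient, not through $O(n)$-averaging. So your generators must be $\eta$-equivariant, not ``$K$-invariant polynomials times $\varphi$''. For instance, $\Phi_{1,1}$ is $(O(2),\eta)$-equivariant and anti-invariant under transposition, which by Lemma~\ref{lem:transposition} confines $\Orb(-,\Phi_{1,1})$ to signature $(1,1)$.
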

The case $n = 1$ is quick to establish, see \S\ref{s:n_equal_1}. Moreover, the main result from \cite{MSY} shows that every function $Ψ_{n,0}\in \mcT'_{n,0}$ from the positive definite hermitian space has a transfer to $\mcS'_n$ (Theorem \ref{thm:pos_def}). A sign switch argument extends this to $\mcT'_{0,n}$. So Theorem \ref{thm:intro_main} adds the converse direction for signatures $(2,0)$ and $(0,2)$, as well as establishes the $(1,1)$-case.

\subsection{Strategy of proof}

Our approach is based on the Weil representation and the study of invariant differential operators. We begin by explaining this on the $\GL_n$-side. The structure of the problem is to describe $\Orb(-,\mcS'_n)$, which amounts to starting from the reductive dual pair
$$\big(\mr{O}(\mfgl_{n+1}(\mbR),\,\tr(Y^2)),\ \Mp_2\!\big),$$
restricting along the representation $\GL_n(\mbR)\to \mr{O}(\mfgl_{n+1}(\mbR))$, and taking $\GL_n(\mbR)$-coinvariants. This is similar to a seesaw situation, with the difference that $\GL_n(\mbR) \subset \Sp_{2(n+1)^2}(\mbR)$ does not give rise to a reductive dual pair because it does not equal its double centralizer. More precisely, the ring of invariant polynomials
$$\mcA' := \mbC[\mfgl_{n+1}]^{\GL_n}$$
is a polynomial ring in $2n+1$ variables, but these variables do not correspond to tensors defining a linear algebraic group dual to $\GL_n$. We instead pass to the infinitesimal Weil representation
\begin{equation}\label{eq:intro_Weil}
ω:\mfsl_2(\mbC) \lr \End_{\GL_n(\mbR)}\big(\mcS(\mfgl_{n+1}(\mbR))\big).
\end{equation}
The motivation for this is that we can view both $ω(\mfsl_2(\mbC))$ and $\mcA'$ as subsets of the ring $\mbC\{\mfgl_{n+1}\}^{\GL_n}$ of invariant algebraic differential operators on $\mfgl_{n+1}$. The slogan is that we study the ``non-linear seesaw''
\begin{equation}\label{eq:intro_seesaw_GLn}
\xymatrix{O(\mfgl_{n+1}) \ar@{-}[rrd] \ar@{-}[d] & & \mbC\langle ω(\mfsl_2(\mbC)),\,\, \mcA'\rangle \ar@{-}[d]\\
\GL_n \ar@{-}[rru] & & \ \mfsl_2.}
\end{equation}
Let $\mfA'$ denote the upper right corner, i.e. the (non-commutative) subalgebra of $\bbC\{ \lie{gl}_{n+1} \}^{\GL_n}$ generated by $\omega(\lie{sl}_2(\bbC))$ and $\calA'$. Note that $\mbC\{\mfgl_{n+1}\}$ acts on $\mcS'_n$ naturally by differentiation. Since $\calQ' = \Spec(\calA')$, any invariant differential operator on $\lie{gl}_{n+1}$ restricts to a differential operator on $\calA'$, inducing a ring homomorphism
\begin{equation}\label{eq:intro_restrict_diffop}
\mbC\{\mfgl_{n+1}\}^{\GL_n}\ \lr\ \ \mbC\{\mcQ'\}.
\end{equation}
In particular, the space $\Orb(-,\mcS'_n)$ is an $\lie A'$-module. Our first result for $n=2$ exhibits concrete generators:
\begin{theorem}[see Corollary \ref{cor:main_S_2}]\label{thm:intro_GL2_generators} There exist three explicit functions $\Phi_{2,0}, \Phi_{1,1}, \Phi_{0,2} \in \mcS'_2$ such that $\Orb(-,\mcS'_2)$ is generated as $\mfA'$-module by
$$\Orb(-,Φ_{2,0}),\quad \Orb(-,Φ_{1,1}),\quad\text{and}\quad \Orb(-,Φ_{0,2}).$$
\end{theorem}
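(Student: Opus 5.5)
Since every operator in $\mfA'$ preserves $\mcS'_2$, the inclusion of the $\mfA'$-span in $\Orb(-,\mcS'_2)$ is automatic, and I will prove the reverse containment by reducing to a structural statement about $\mcS'_2$ itself, before taking orbital integrals. The map $Φ\mapsto \Orb(-,Φ)$ is $\mfA'$-equivariant by \eqref{eq:intro_restrict_diffop}. It therefore suffices to show that
$$\mcS'_2 = \mfA'\cdot\{Φ_{2,0},Φ_{1,1},Φ_{0,2}\} + N,$$
where $N$ is contained in the kernel of $\Orb$. The natural candidate for $N$ is the span of $X\cdot Φ$ for $X\in \mfgl_2(\mbR)$ acting by the derivative of conjugation, since $\GL_2(\mbR)$-coinvariants are killed by orbital integrals. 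So the target is a statement about $\GL_2$-coinvariants of the $(\mfg,K)$-type module $\mcS'_2$ as an $\mfA'$-module.

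First I decompose $\mcS'_2=\mbC[\mfgl_3]\cdotφ$ under the compact group $K=O(2)\subset\GL_2(\mbR)$ and the $K$-type structure of the Weil representation. I use the Fock model, in which $ω(\mfsl_2)$ acts through the Euler operator together with multiplication by and contraction against the quadratic form $\tr(Y^2)$, written in Fock coordinates. Modulo $\mfgl_2$-coinvariants, only $K$-invariant and, more precisely, $\GL_2$-relevant vectors survive. The raising operator of $ω(\mfsl_2)$ and multiplication by $\mcA'$ generate a great deal; the key point is that elements of $\mcA'$ commute with $\GL_2$. Second, I filter $\mcS'_2$ by polynomial degree. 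On the associated graded, $\mcA'$ acts by multiplication, and $ω$ of the raising operator acts by multiplication by a quadratic. I then compute the coinvariants $\big(\mbC[\mfgl_3]/\mfgl_2\big)$ as a module over $\mcA'[\text{raising operator}]$, and I expect it to be generated by three classes. These three classes are pinned down by the three components $p+q=2$ of \eqref{eq:intro_matching}, which matches the paper's naming $Φ_{2,0},Φ_{1,1},Φ_{0,2}$, since the orbital integrals near the boundary detect the signature. This amounts to a Kostant-type freeness computation for $\GL_2$ acting on $\mfgl_3=\mfgl_2\oplus V\oplus V^*\oplus \mbC$, done by hand with explicit invariants (traces of $A$, $A^2$, the entry $d$, and $wA^iv$ for $i=0,1$).

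Third, I lift from the graded statement to the filtered one by induction on degree. The error terms lie in lower filtration and are handled inductively, using the lowering operator of $ω(\mfsl_2)$ to reduce the degree where needed.

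The main obstacle is the second step. The coinvariants of a polynomial ring under a non-reductive-looking action interacting with the $\mfsl_2$-operators are not obviously finitely generated by three elements. One must also check that no extra classes survive that have vanishing orbital integrals on one component but not another. My fallback is to work directly with the functions $\Orb(-,Φ)$ on $\mcQ'(\mbR)_\rs$. There I would show, using explicit formulas (Bessel-type integrals in $n=2$), that the $\mfA'$-module they span is detected by asymptotics along the singular locus, and that three generators suffice.
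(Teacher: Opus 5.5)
There are two genuine gaps. Your overall strategy matches the paper's: factor $\Orb$ through a coinvariant quotient, work in the Fock model, and induct on degree.

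\textbf{The kernel $N$ is too small.} With $N=\langle \mfgl_2\cdot\mcS'_2\rangle$, your target identity is false. Modulo $\mfgl_2$, only vectors of $SO(2)$-weight $0$ survive. That quotient still splits under $O(2)/SO(2)$ into an $\eta$-trivial part and an $\eta$-isotypic part.
\begin{itemize}
\item The $\Phi_{p,q}$ are $(O(2),\eta)$-equivariant, and $\mfA'$ commutes with $O(2)$. So $\mfA'\cdot\{\Phi_{2,0},\Phi_{1,1},\Phi_{0,2}\}$ stays in the $\eta$-isotypic part.
\item The Gaussian $\varphi$ has nonzero image in the $\eta$-trivial part. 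The untwisted functional $\Phi\mapsto\int_{\GL_2(\mbR)}\Phi(g^{-1}Y)\,dg$ kills $\mfgl_2\cdot\mcS'_2$ and all $\eta$-equivariant functions, but is positive on $\varphi$.
\end{itemize}
Hence $\varphi\notin \mfA'\cdot\{\Phi_{p,q}\}+N$, and the $\mfgl_2$-coinvariants are not generated by three classes. You must pass to $(O(2),\eta)$-coinvariants as in \eqref{eq:def_bar_S_n}. In the paper, this is exactly what kills $\varphi$ and the $\sigma$-invariant real part of $z_+(z'_-)^2$. Your remark that ``only $K$-invariant vectors survive'' has it backwards: those are precisely the vectors the $\eta$-twist annihilates.

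\textbf{The finite-generation step is only expected, not proved.} The ``Kostant-type freeness'' heuristic does not supply a mechanism, and working over $\mcA'[\text{raising operator}]$ is not enough. The paper needs:
\begin{itemize}
\item both factors of $\omega(\mfsl_2\times\mfsl_2)$, with raising and lowering operators;
\item the explicit $\mfgl_2(\mbC)$-operators $\xi_1,\dots,\xi_4$ in Fock coordinates;
\item the $SO(2)$-weight argument via $\xi_2$.
\end{itemize}
Even after all of these reductions, infinitely many classes survive, namely $z_+^a(z'_-)^{2a}$, $z_+^a(\zeta'_-)^{2a}$, $z_-^a(z'_+)^{2a}$, $z_-^a(\zeta'_+)^{2a}$ for $a\geq 1$, and $\zeta^k z'_+\zeta'_-$.

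The missing idea is to use the cubic invariant $p=16\,wyv$ through its biweight components $p^{(i,j)}$. These lie in $\mfA$ because the weight operators $\omega(H,0)$ and $\omega(0,H)$ do. For instance,
\[
z_+^a(z'_-)^{2a}=64\pi^3\,\iota(p^{(1,2)})\big(z_+^{a-1}(z'_-)^{2a-2}\big)-z_+^{a-1}z_-(z'_+)^2(z'_-)^{2a-2},
\]
and $\iota(p^{(1,2)})(\mathbf 1)$ ties $z_+(z'_-)^2$ to $z_-(z'_+)^2$. Only this cuts the list down to $\zeta$, $z_+(z'_-)^2$ and $z_+(\zeta'_-)^2$, as in Theorem \ref{thm:generators_S2}.

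These three classes must then be matched explicitly with $\Phi_{1,1}$, $\Phi_{2,0}$ and $\Phi_{0,2}$. This uses $\iota^{-1}(\zeta)=4\pi(y_{12}-y_{21})\varphi$ and discards the $\sigma$-invariant real parts. Your proposal exhibits no functions at all. The claim that ``boundary asymptotics detect the signature'' is not an argument for generation, and the analytic fallback is likewise unsubstantiated.
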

As previously discussed, the functions $Φ_{2,0}$ and $\Phi_{0,2}$ arise from our prior work \cite{MSY}; for details as well as the construction of $\Phi_{1,1}$, see the discussion preceding \Cref{cor:main_S_2}.

On the unitary side, we start with the quadratic spaces $\big(\mfu(V_{p,q}\oplus \mbC),\ -\tr(X^2)\big)$ and consider analogous commutator diagrams
\begin{equation}\label{eq:intro_seesaw_U}
\xymatrix{O(\mfu(V_{p,q}\oplus \mbC)) \ar@{-}[rrd] \ar@{-}[d] & & \mfA' \ar@{-}[d]\\
U(V_{p,q}) \ar@{-}[rru] & & \ \mfsl_2.}
\end{equation}
 Here, we have written $\mfA'$ again because the subalgebra of $\mbC\{\mfu(V_{p,q}\oplus \mbC)\}$ generated by $\mbC[\mfu(V_{p,q}\oplus \mbC)]^{U(V_{p,q})}$ and the infinitesimal Weil representation on $\mcS(\mfu(V_{p,q}\oplus \mbC)(\mbR))$ is naturally identified with the ring $\mfA'$ from before,  see \Cref{prop:inv_diff_op_isomorphic}. Our second result is the following unitary analog of Theorem \ref{thm:intro_GL2_generators}.
\begin{theorem}[see Corollary \ref{cor:main_T_11}]\label{thm:intro_U_generators}
For every signature $(p,q)$ with $p + q = 2$, $\Orb(-,\mcT'_{p,q})$ is a cyclic $\mfA'$-module generated by the orbital integral $\Orb(-,ψ_{p,q})$ of the Siegel Gaussian $ψ_{p,q}$.
\end{theorem}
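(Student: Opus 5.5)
The plan is to show that the orbital integral map $\mcT'_{p,q}\to \Orb(-,\mcT'_{p,q})$ is a surjection of $\mfA'$-modules, and that the source is already generated by $ψ_{p,q}$, up to operators whose orbital integrals vanish. The argument therefore splits into two reductions.

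\textbf{Step 1: equivariance.} First I would check that $\Orb(-,\,\cdot\,)$ intertwines the action of $\mfA'$ on $\mcS(\mfu(V_{p,q}\oplus\mbC)(\mbR))$ with its action on $\mcC^\infty(\mcQ'(\mbR)_\rs)$ through \eqref{eq:intro_restrict_diffop}. For multiplication by invariant polynomials $a\in\mcA'$ this is immediate. For $ω(\mfsl_2(\mbC))$, every element is a $U(V_{p,q})$-invariant differential operator of degree at most two. Its restriction to $\mcQ'$ is computed by a radial-part formula, which we may take from \Cref{prop:inv_diff_op_isomorphic} and the identification of $\mfA'$ used there. Compatibility with orbital integrals then follows by differentiating under the integral sign, which is valid on the regular semisimple locus.

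\textbf{Step 2: generation.} Next I would show that $\mcT'_{p,q}=\mbC[\mfu(V_{p,q}\oplus\mbC)]\cdotψ_{p,q}$ is spanned, modulo functions with identically zero orbital integrals, by $\mfA'\cdotψ_{p,q}$. Write $W=\mfu(V_{p,q}\oplus\mbC)$.
\begin{itemize}
\item I would decompose $\mbC[W]$ under $K=U(V_{p,q})\cap$ (maximal compact) using harmonic polynomials and Howe duality for the pair $(\mr O(W),\mfsl_2)$. This gives the standard fact that $\mbC[W]ψ=\bigoplus_h U(\mfsl_2)\,h\,ψ$, where $h$ runs over $\mr O(W)$-harmonic polynomials. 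The statement must be applied $U$-equivariantly.
\item Orbital integrals factor through $U(V_{p,q})$-coinvariants. For compact $U(V_{p,q})$ (signatures $(2,0)$ and $(0,2)$), these coinvariants are the invariants. One then needs $\mbC[W]^{U}ψ=\mfA'\cdotψ$, which follows by induction on degree: $\mbC[W]^U=\mcA'$ is generated by $2n+1=5$ invariants. The $ω(\mfsl_2)$-action is used to absorb Gaussian factors, so that $\mcA'\cdotψ\subset\mfA'ψ$ trivially.
\item The content of this bullet is therefore just that $\Orb(fψ)=\Orb(f^\natural ψ)$, where $f^\natural$ is the average of $f$ over the compact group $U$.
\end{itemize}

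\textbf{Step 3: signature $(1,1)$, the main obstacle.} Here $U(1,1)$ is noncompact, so averaging is unavailable. The space of coinvariants of $\mbC[W]ψ$ need not be spanned by invariant polynomials times $ψ$, because $ψ$ is only $K$-invariant. I would decompose $fψ$ under $K=U(1)\times U(1)$ and reduce to $K$-invariant $f$. I would then try to show that every $K$-invariant element is congruent to an element of $\mfA'ψ$ modulo $\mfu(1,1)\cdot\mcT'_{1,1}$, the span of derivatives along the $U(1,1)$-action, whose orbital integrals vanish. The approach is induction on the degree filtration: the graded piece is $\mbC[W]^K$ modulo $\mfu(1,1)$-derivatives, and one compares it with $\mathrm{gr}\,\mfA'$ acting on $ψ$. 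An explicit computation with generators of $\mbC[W]^K$, presumably finitely many invariants beyond the five generators of $\mcA'$, should show that the extra $K$-invariants are hit by $\mfp$-derivatives plus elements of $ω(\mfsl_2)\mcA'$. This leading-term computation is the step I expect to be hardest. If the filtration argument fails, the fallback is to compare dimensions: filter both sides by degree and compute the asymptotic growth of $\Orb(-,\mcT'_{1,1})$ using its germ behaviour near the boundary, then show it matches that of $\mfA'\Orb(-,ψ_{1,1})$.
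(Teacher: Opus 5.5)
Your treatment of the compact signatures $(2,0)$ and $(0,2)$ is correct and is the paper's argument (Example~\ref{ex:cylic_T_definite}). Since $\psi_{p,q}$ is $U(V_{p,q})$-invariant, $\Orb(-,f\psi_{p,q})=\Orb(-,f^\natural\psi_{p,q})$ with $f^\natural\in\mcA'\subset\mfA'$, so neither an induction on degree nor Howe duality is needed there. Your Step~1 is Proposition~\ref{prop:diff_op_orb_int_generic}.

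The genuine gap is signature $(1,1)$, which is where the content of the statement lies. Your Step~3 is a programme (``should show'', ``the step I expect to be hardest'', a fallback) rather than a proof, and it does not identify what makes the induction close. Suppose you carry it out on $\mcT_{1,1}$ in the Fock model $\mcG$; the trivial factor in $\tr(x)$ and $e$ splits off and is cyclic for $\mbC\{\tr(x),e\}$. Then the following reductions are available:
\begin{itemize}
\item the Weil operators $\omega(L,0)$, $\omega(R,0)$, $\omega(0,L)$, $\omega(0,R)$ remove the products $t^2$, $\tau_+\tau_-$, $t'_+t'_-$, $\tau'_+\tau'_-$ modulo lower degree;
\item the non-compact operators $\mu_3,\mu_4$ from $\mfu(V_{1,1})\otimes\mbC$ trade $t'_-\tau'_+$ and $t'_+\tau'_-$ for $t\tau_-$ and $t\tau_+$;
\item the $K$-weights ($\mu_1,\mu_2$) then kill every remaining monomial except $\mathbf 1$ and $t$.
\end{itemize}

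The class of $t$ (a linear form in the compact direction of $\mfsu(V_{1,1})$, times $\psi_{1,1}$) cannot be reached from $\psi_{1,1}$ by the Weil representation and $\mfu(V_{1,1})$-derivatives. It has biweight $(1/2,0)$, while the first weights occurring in $U(\omega(\mfsl_2\times\mfsl_2))\,\psi_{1,1}$ lie in $-1/2+2\mbZ$. The kernel of $\mcT_{1,1}\to\ov{\mcT_{1,1}}$ is biweight-graded, and the orbital integral of $t$ is not identically zero, so $t$ is not in that kernel either.

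The quadratic invariants already lie in $\omega(\mfsl_2\times\mfsl_2)$, so the indispensable extra input is the cubic invariant $r=8\langle u,xu\rangle$. More precisely, you need its biweight-$(1,0)$ component, which sends $\mathbf 1$ to a non-zero multiple of $t$ (Theorem~\ref{thm:generator_T11}). In $\iota(r)(\mathbf 1)$ itself this multiple of $t$ is only a lower-order term, and the cubic leading part reduces merely to $\mfA\cdot t$. A comparison of leading terms of the generators, as you propose, would therefore miss it. You must isolate the weight component, which is possible inside $\mfA$ because the weight operators lie in $\mfA$; that component raises degree by only one.

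Your dimension-growth fallback is not a viable substitute. It would require controlling $\Orb(-,\mcT'_{1,1})$ itself, for instance knowing whether the coinvariants inject into the orbital integrals, and the paper leaves that open (Question~\ref{qu:surjection_is_iso}).
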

Our third result is that the generators from Theorems \ref{thm:intro_GL2_generators} and \ref{thm:intro_U_generators} are mutual transfers.
\begin{theorem}[{\Cref{thm:main_signature_11,thm:pos_def}}]\label{thm:intro_explicit_transfer}
For every $(p,q)$ with $p+q = 2$ and with appropriate normalizations of Haar measures, $Φ_{p,q}$ and $ψ_{p,q}$ are mutual transfers.
\end{theorem}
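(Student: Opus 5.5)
Theorem \ref{thm:intro_explicit_transfer} says that for each $(p,q)$ with $p+q=2$ the orbital integrals agree: $\Orb(-,Φ_{p,q})=\Orb(-,ψ_{p,q})$ on $\mcQ'(\mbR)_\rs$. Here $\Orb(-,ψ_{p,q})$ is extended by zero outside the image of $[\mfu(V_{p,q}\oplus\mbC)(\mbR)_\rs]$. My plan is to compute both sides explicitly and compare.

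\textbf{Definite signatures.} The cases $(2,0)$ and $(0,2)$ are covered by Theorem \ref{thm:pos_def}, taken from \cite{MSY}. There $Φ_{2,0}$ is constructed precisely as a transfer of the Gaussian $ψ_{2,0}$. For $(0,2)$ I would run the sign switch argument. Replacing the hermitian form by its negative identifies $\mfu(V_{0,2}\oplus\mbC)$ with a space of signature $(2,0)$, up to the last coordinate. Correspondingly I would apply $Y\mapsto -Y$, or a twist by a suitable sign character, on $\mfgl_3$. One has to check that this automorphism is compatible with the matching bijection \eqref{eq:intro_matching} on $\mcQ'$ (it changes the signs of the odd-degree invariants) and that it sends Gaussians to Gaussians. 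Then $Φ_{0,2}$ is the image of $Φ_{2,0}$, and the identity follows formally.

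\textbf{Signature $(1,1)$.} This is the new case. The key structural input is that $\Orb(-,ψ_{1,1})$ is characterized by its equivariance. Since $ψ_{1,1}$ is the Siegel Gaussian, it satisfies the $K$-type conditions of the Weil representation: it is an eigenvector of $ω(\mathrm{k})$, for the generator $\mathrm{k}$ of $\mathfrak{so}_2$, with weight given by the signature $(1,1)$. It is also annihilated by the lowering operator. After passing through the restriction map \eqref{eq:intro_restrict_diffop}, these become explicit linear differential equations on $\mcQ'(\mbR)_\rs$. The plan is:
\begin{enumerate}
\item write these equations down in the coordinates of $\mcQ'=\mbA^5$;
\item show that, on the open region of $\mcQ'(\mbR)_\rs$ matching signature $(1,1)$, their solution space is small (ideally one-dimensional, modulo solutions that are not tempered or do not vanish on the other strata);
\item construct $Φ_{1,1}\in\mcS'_2$ so that its orbital integral satisfies the same equations;
\item fix the constant by evaluating both orbital integrals at one convenient point, or by comparing asymptotics near the boundary of the $(1,1)$ region.
\end{enumerate}
For step 3 I would take $Φ_{1,1}=P\cdotφ$ with $P$ a polynomial, guided by the requirement that $Φ_{1,1}$ have the same $\mfsl_2$-weight as $ψ_{1,1}$. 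The difference in weights, $\dim\mfgl_3=9$ versus $\dim\mfu(V\oplus\mbC)=9$ but with signature $(5,4)$ instead of $(6,3)$, forces a polynomial factor of low degree.

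\textbf{Main obstacle.} The hardest part will be vanishing: showing that $\Orb(-,Φ_{1,1})$ is zero on the regions of $\mcQ'(\mbR)_\rs$ matching the definite signatures, and that it is smooth across the walls. For this I would try a direct computation. Reduce the $\GL_2(\mbR)$-orbital integral over regular semisimple elements to an integral over a torus. Use the explicit Gaussian to evaluate it in closed form on each component of $\mcQ'(\mbR)_\rs$. Then read off vanishing where the eigenvalue field is definite, as for a $\GL_1$-type odd integrand. If the closed form is unmanageable, I would instead use uniqueness of the differential system separately on each component. Together with the vanishing of the boundary data, this would force the $(2,0)$ and $(0,2)$ components to vanish.
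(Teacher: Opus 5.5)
\textbf{The signature $(1,1)$ part has a genuine gap.} Your definite-signature argument is essentially the paper's (Theorem~\ref{thm:pos_def} plus a sign switch).

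\emph{The differential system is not available as described.} In indefinite signature the Siegel Gaussian is not a lowest weight vector. In the Fock model of $\mathfrak{t}_{1,1}$ one has $\omega(L,0)\mathbf 1=-\frac{1}{8\pi}\tau_+\tau_-\neq 0$ and $\omega(0,L)\mathbf 1=-\frac{1}{8\pi}\tau'_+\tau'_-\neq 0$, and similarly for the raising operators. So $\psi_{1,1}$ is not annihilated by the lowering operator. The only Weil-representation equations it satisfies are the two weight equations, $(\omega(H,0)+\tfrac12)\psi_{1,1}=0$ and $\omega(0,H)\psi_{1,1}=0$. The given $\Phi_{1,1}=(y_{12}-y_{21})\varphi$ satisfies the same two equations; your weight heuristic does land on this function, since it has biweight $(-\tfrac12,0)$.

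\emph{Steps (2) and (4) cannot work.} Restricted to $\mcQ$, the weight equations become two commuting second-order operators in $(\alpha,\beta,\gamma)$. Their principal symbols are coprime, so the characteristic variety is four-dimensional and the system is not holonomic. There is no reason for its solution space on $\mcQ_{\rs,(1,1)}$ to be finite-dimensional, let alone one-dimensional. Hence step (2) fails, and normalizing at a point or comparing asymptotics (step (4)) cannot finish the argument.

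\emph{The three ideas you are missing.}
\begin{enumerate}
\item Prove the identity on the whole hyperplane $\{\beta=0\}\subset\mcQ_{\rs,(1,1)}$ by explicit integration (\S\ref{s:matching_beta_0}). For $\alpha<0$ this requires Bessel function identities.
\item Show that a solution of $\omega(0,H)\vert_{\mcQ}f=0$ is determined by $f\vert_{\beta=0}$, provided each $\partial_\beta^k f$ decays rapidly in $\gamma$ and $f$ is real-analytic in $\beta$ (Proposition~\ref{prop:unique_extension}). The mechanism is that the coefficient of $\partial_\beta^2$ is $-\beta/4\pi$, which vanishes on $\{\beta=0\}$. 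Differentiating the equation $k-1$ times at $\beta=0$ gives the Euler equation $\gamma\partial_\gamma f^{(k)}=-\frac{k+1}{2}f^{(k)}$ for the Taylor coefficients of a difference of two solutions. Rapid decay in $\gamma$ then forces these coefficients to vanish.
\item Establish real-analyticity in $\beta$. This is easy for $\psi_{1,1}$ but not for $\Phi_{1,1}$. There the paper adds two third-order invariant operators, annihilating the image of $\Phi_{1,1}$ in the $\mathfrak{gl}_2$-coinvariants, which make the system elliptic on $\mcQ_{\rs,(1,1)}$. It then applies H\"ormander's analytic regularity theorem (Proposition~\ref{prop:analytic_GL2}).
\end{enumerate}
Steps 1 and 3 are where the real work lies.

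\textbf{Your ``main obstacle'' is actually the easy part.} The signature components of $\mcQ_\rs(\mathbb{R})$ are open and separated by $\{\Delta=0\}$, so there is nothing to glue across walls. Vanishing of $\Orb(-,\Phi_{1,1})$ on the definite components is a one-line symmetry argument:
\begin{itemize}
\item transposition $(y,v,w)\mapsto(y^t,w^t,v^t)$ preserves each regular semisimple orbit;
\item $\Orb(Y,\Phi)=\tau(Y)\Orb(Y,\Phi^t)$, with $\tau(Y)=(-1)^q$ by Lemma~\ref{lem:transposition};
\item $\Phi_{1,1}^t=-\Phi_{1,1}$.
\end{itemize}
Hence $\Orb(-,\Phi_{1,1})$ vanishes wherever $q$ is even. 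Your fallback via vanishing boundary data would not work in any case, since $\{\Delta<0\}$ does not meet $\{\beta=0\}$.

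\textbf{The sign switch uses the wrong involution.} The map $Y\mapsto -Y$ sends $(\alpha,\beta,\gamma)$ to $(\alpha,\beta,-\gamma)$ and therefore preserves signature. Inside $\{\Delta<0\}$, it is the sign of $\beta$ that separates $(2,0)$ from $(0,2)$. The correct map is $(y,v,w)\mapsto(y,v,-w)$, which negates $\beta$ and $\gamma$. This matches negating the hermitian form on the unitary side.
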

We have already noted that the ring $\mfA'$ occurs on both the $\GL_n$-side and the unitary side. This phenomenon is compatible with smooth transfer in the sense that if $\Phi$ and $(\Psi_{p,q})_{p+q=n}$ are transfers, then for every $\mcD \in \mfA'$ also $\mcD(Φ)$ and $(\mcD(Ψ_{p,q}))_{p+q=n}$ are transfers; while this follows from a theorem of Xue \cite[Theorem 9.1]{Xue} about the Weil representation of $\Mp_2$, the infinitesimal version here can be shown directly, see \Cref{cor:Weil_rep_transfer}. The conclusion is that Theorems \ref{thm:intro_GL2_generators} and \ref{thm:intro_U_generators} allow us to extend Theorem \ref{thm:intro_explicit_transfer} to the identity
$$\Orb(-,\mcS'_2) = \bigoplus_{p+q = 2}\Orb(-,\mcT'_{p,q}).$$
This is how we prove Theorem \ref{thm:intro_main}.

\subsection{Coinvariant modules}

We conjecture that Theorems \ref{thm:intro_GL2_generators} and \ref{thm:intro_U_generators} hold in general. That is, $\Orb(-,\mcT'_{p,q})$ should always be generated by the orbital integral of the Siegel Gaussian $ψ_{p,q}$ as $\mfA'$-module, and similarly $\Orb(-,\mcS'_n)$ should be generated by the orbital integrals of $n+1$ transferring functions. These two statements are equivalent if Conjecture \ref{conj:intro_transfer_poly_Schwartz} is true.

However, our proofs of Theorems \ref{thm:intro_GL2_generators} and \ref{thm:intro_U_generators} are purely algebraic and do not refer to orbital integrals. We instead consider the coinvariant modules
\begin{equation}\label{eq:intro_def_coinvariants}
\begin{aligned}
\ov{\mcS_n'} & := \big(\mcS_n'\ /\ \langle \mfgl_n\cdot \mcS_n'\rangle \big)_{(O(n),η)}\\[1mm]
\ov{\mcT'_{p,q}} & := \mcT'_{p,q}\ /\ \langle\mfu(V_{p,q})\cdot \mcT'_{p,q}\rangle,
\end{aligned}
\end{equation}
where $η(g) = \mr{sign}(\det(g))$ is the quadratic character that goes into the definition of the $\GL_n$-orbital integral. Then $Φ\mapsto \Orb(-,Φ)$ and $Ψ\mapsto \Orb(-,Ψ)$ factor over $\ov{\mcS'_n}$ resp. $\ov{\mcT'_{p,q}}$. The $\mfA'$-action descends to $\ov{\mcS'_n}$ and $\ov{\mcT'_{p,q}}$, and we show that $\ov{\mcS'_2}$ is generated by the images of $Φ_{2,0},Φ_{1,1}$ and $Φ_{0,2}$, while $\ov{\mcT'_{p,q}}$ is generated by the image of $ψ_{p,q}$ $(p+q = 2)$.
\begin{conjecture}\label{conj:intro_coinvariants}
(1) For every $n\geq 1$, the coinvariants $\ov{\mcS'_n}$ are generated by $n+1$ elements as $\mfA'$-module.

\mn (2) For every signature $(p,q)$, the coinvariants $\ov{\mcT'_{p,q}}$ are a cyclic $\mfA'$-module generated by the image of the Siegel Gaussian $ψ_{p,q}$.
\end{conjecture}
It would be interesting to find natural candidates $Φ_{p,q}\in \mcS'_n$ for the generators of $\ov{\mcS'_n}$ resp. transfers of the $ψ_{p,q}\in \mcT'_{p,q}$. For example, any such $Φ_{p,q}$ should be $(O(n),η)$-invariant and of the same biweight as $ψ_{p,q}$ for the two Weil representations from \S\ref{sec:Weil rep transfer} (cf. Conjecture \ref{conj:cyclic_coinvariants}).

Another interesting question is whether it could be true that the natural quotient maps
$$\ov{\mcS'_n}\ {\relbar\joinrel\twoheadrightarrow}\ \Orb(-,\mcS'_n),\qquad \ov{\mcT'_{p,q}}\ {\relbar\joinrel\twoheadrightarrow}\ \Orb(-,\mcT'_{p,q})$$
are in fact isomorphisms. This would be in line with the expected density principle for group orbital integrals from \cite[\S6.2.2]{CZ}.

\subsection{Transfer for $n = 2$}

The proofs of Theorems \ref{thm:intro_GL2_generators} and \ref{thm:intro_U_generators} are straightforward once suitable elements of $\mfA'$ and $\mfA'$ have been written down. However, we found it less immediate to obtain the transfer identity $\Orb(-,Φ_{1,1}) = \Orb(-,ψ_{1,1})$ in Theorem \ref{thm:intro_explicit_transfer}. Its two sides are given by concrete triple (resp. double) integrals parametrized by an open subset $\mcQ(\mbR)_{\rs, (1,1)}$ of $\mbR^3$ (see Part III for notation), but it seems hard to identify them directly. Our proof ultimately combines the following three techniques:

\mn (1) There is a certain hyperplane $\{β = 0\}\subset \mcQ(\mbR)_{\rs,(1,1)}$ over which we show $\Orb(-,Φ_{1,1}) = \Orb(-,ψ_{1,1})$ by standard integral substitutions.

\mn (2) Both $Φ_{1,1}$ and $ψ_{1,1}$ are annihilated by one of the weight operators from \S\ref{sec:Weil rep transfer}. It follows that $\Orb(-,Φ_{1,1})$ and $\Orb(-,ψ_{1,1})$ are solutions to a certain differential equation on $\mcQ(\mbR)_{\rs,(1,1)}$. We prove that an analytic solution to this equation is uniquely determined by its restriction to $\{β = 0\}$.

\mn (3) We finally show that $\Orb(-,Φ_{1,1})$ and $\Orb(-,ψ_{1,1})$ are sufficiently analytic. The verification for $ψ_{1,1}$ is elementary, while the one for $Φ_{1,1}$ is done with a short computer program due to the number of terms involved.

\mn We are wondering if it is true that for every $Φ \in \mcS'_n$ (resp.\ $Ψ\in \mcT'_{p,q}$) the orbital integral $\Orb(-,Φ)$ (resp.\ $\Orb(-,Ψ)$) is a real-analytic function on $\mcQ(\mbR)_\rs$. This is easily seen to be true for $Ψ\in \mcT'_{n,0}$ or $Ψ\in \mcT'_{0,n}$ (compare the proof of Theorem \ref{thm:pos_def}). But already for $\Orb(-,\mcT'_{1,1})$ or $\Orb(-,\mcS'_2)$ we only prove a partial result that suffices to apply (2). It would be desirable to improve this step with a structural argument.

We are also wondering if Conjecture \ref{conj:intro_transfer_all_Schwartz} for $n = 2$ can be proved from Theorems \ref{thm:intro_GL2_generators}, \ref{thm:intro_U_generators} and \ref{thm:intro_explicit_transfer}. Polynomial type Schwartz functions are dense in all Schwartz functions, and the information of the pairs $Φ_{p,q} \leftrightarrow ψ_{p,q}$ and the $\mfA'$-action might be sufficient for a suitable approximation argument.

\subsection{Structure of the article}

In Part I, we introduce the general setting for \eqref{eq:intro_seesaw_GLn} and \eqref{eq:intro_seesaw_U}. We note that the notation in the body of the paper will be slightly more refined than here in the introduction; it will be fixed in \S\ref{ss:reduction_trace_0}. Part II is devoted to the structure of $\Orb(-,\mcS'_2)$ and $\Orb(-,\mcT'_{1,1})$, and in particular, the proofs of Theorems \ref{thm:intro_GL2_generators} and \ref{thm:intro_U_generators}. Part III is more analytic and establishes Theorem \ref{thm:intro_explicit_transfer}.

\subsection{Acknowledgements}

        We thank Weixiao Lu and Binyong Sun for interesting conversations on the topic of this article. We thank the American Institute of Mathematics, the Max Planck Institute for Mathematics in Bonn, and the Morningside Center of Mathematics for their hospitality. Parts of this work were done during visits at these institutions.

SS is supported by a Discovery Grant from the National Science and Engineering Research Council (NSERC) of Canada. TY is partially supported by UW-Madison's Kellet Mid-Career award and a National Science Foundation grant (MS-2501617).

\part{Representation-theoretic setup}
\section{Differentiation and orbital integrals}

\subsection{Differential operators}

Let $R$ be a finite type $\mbC$-algebra. The $R$-module of differential operators of order $\leq n$ of $R$ is defined inductively by $D_0(R) = R$ and
$$D_n(R) := \{\mcD\in \End_\text{$\mbC$-vect}(R)\ \mid\ [\mcD, f]\in D_{n-1}(R)\text{ for all $f\in R$}\}.$$
Let $D(R) = \bigcup_{n\geq 0} D_n(R)$ be the ring of all differential operators. If $R = \mbC[x_1,\ldots,x_n]$ is a polynomial ring, then this is the Weyl algebra
\begin{equation}\label{eq:notation_Weyl}
\mbC\{x_1,\ldots,x_n\} := \mbC[x_1,\ldots,x_n, \partial/\partial x_1,\ldots, \partial/\partial x_n].
\end{equation}
\begin{lemma}\label{lem:restrict_diff_op}
Let $R_0\subseteq R$ be a $\mbC$-subalgebra and let $\mcD\in D_n(R)$ satisfy $\mcD(R_0)\subseteq R_0$. Then $\mcD\vert_{R_0}$ lies in $D_n(R_0)$.
\end{lemma}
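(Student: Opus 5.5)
Induction on $n$. The statement is formally the same for every $n$, so I prove it for all $\mcD \in D_n(R)$ with $\mcD(R_0)\subseteq R_0$ simultaneously, inducting on $n$. Write $\mcD_0 := \mcD\vert_{R_0} \in \End_{\text{$\mbC$-vect}}(R_0)$; this is well defined because $\mcD$ preserves $R_0$.

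\emph{Base case $n=0$.} Here $D_0(R) = R$, so $\mcD$ is multiplication by some $r \in R$. Since $1 \in R_0$, we get $r = \mcD(1) \in R_0$. Hence $\mcD_0$ is multiplication by an element of $R_0$, that is, $\mcD_0 \in D_0(R_0)$.

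\emph{Inductive step.} Let $n \geq 1$ and $f \in R_0$. The commutator $[\mcD, f] = \mcD\circ f - f\circ \mcD$ lies in $D_{n-1}(R)$ by definition of $D_n(R)$. It also preserves $R_0$: both $\mcD$ and multiplication by $f$ preserve $R_0$. By induction, $[\mcD,f]\vert_{R_0} \in D_{n-1}(R_0)$. Restriction to $R_0$ commutes with composition, so
$$[\mcD,f]\vert_{R_0} = [\mcD_0, f].$$
This holds for every $f \in R_0$, so $\mcD_0 \in D_n(R_0)$ by definition.

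There is no real obstacle. The points to check are:
\begin{itemize}
\item the subalgebra $R_0$ contains $1$, which is needed in the base case;
\item the definition of $D_n(R_0)$ only requires testing commutators against elements $f \in R_0$, so the condition for $f \in R \setminus R_0$ is never needed.
\end{itemize}
The finite-type hypothesis plays no role in the argument.
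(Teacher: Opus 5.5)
Your proof is correct and follows the paper's argument: induction on $n$, using that $[\mcD,f_0]$ preserves $R_0$ and that restriction commutes with the commutator, $[\mcD,f_0]\vert_{R_0} = [\mcD\vert_{R_0}, f_0]$. Your explicit base case, which uses $1\in R_0$ to get $\mcD(1)\in R_0$, fills in the step the paper leaves implicit.
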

\begin{proof}
For every $f_0\in R_0$, we have $[\mcD,f_0](R_0)\subseteq R_0$. By induction on $n$, we find that $[\mcD,f_0]\vert_{R_0} \in D_{n-1}(R_0)$. This commutator equals $[\mcD\vert_{R_0}, f_0]$ which shows that $\mcD\vert_{R_0}\in D_n(R_0)$ as claimed.
\end{proof}

\subsection{Invariant differential operators}

Let $G$ be a connected reductive group over $\mbR$ and let $V$ be a finite dimensional algebraic representation of $G$. Let $\mbC[V]$ be the $\mbC$-algebra of polynomial functions on $V$ and let $\mbC\{V\} = D(\mbC[V])$ be the Weyl algebra. On the one hand, both $\mbC[V]$ and $\mbC\{V\}$ can be viewed as real vector spaces and are naturally locally finite algebraic representations of $G$. We denote by $\mcA := \mbC[V]^G$ and $\mbC\{V\}^G$ the subspaces of $G$-invariant elements. By Lemma \ref{lem:restrict_diff_op}, there is a restriction map
\begin{equation}\label{eq:restriction_diff_op_map}
\begin{aligned}
\mbC\{V\}^G & \,\lr\, D(\mcA)\\
\mcD\, & \,\longmapsto\, \mcD\vert_\mcA.
\end{aligned}
\end{equation}
On the other hand, we also obtain a representation of the Lie group $G(\mbR)$ on the real vector space $V(\mbR)$. Then $G(\mbR)$ acts on $\mcC^\infty(V(\mbR))$ by translation, $(g\cdot f)(v) = f(g^{-1}v)$, and on differential operators by conjugation, $(g\cdot \mcD) = g\circ \mcD \circ g^{-1}$, while $\mbC\{V\}$ acts on smooth functions by differentiation. These actions are compatible via
$$(g\cdot \mcD)(g\cdot f) = g\cdot (\mcD(f)).$$
Let $\mcS(V(\mbR))\subset \mcC^\infty(V(\mbR))$ be the subspace of Schwartz functions. It is stable under the $\mbC\{V\}$-action and an element $\mcD \in \mbC\{V\}$ is uniquely determined by its restriction to $\mcS(V)$.
\begin{lemma}\label{lem:invariant_diff_ops}
An operator $\mcD\in \mbC\{V\}$ is $G$-invariant if and only if it is $G(\mbR)$-invariant as operator on $\mcS(V(\mbR))$.
\end{lemma}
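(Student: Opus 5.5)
The plan is to reduce both conditions to statements about the action of $G$ on the finite-dimensional pieces of the Weyl algebra, and then use Zariski density of real points.

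\textbf{Reduction to a compatibility statement.} Write $\mbC\{V\}$ as the increasing union of the finite dimensional $G$-stable subspaces $D_n(\mbC[V])$ of operators of order $\leq n$ with polynomial coefficients of degree $\leq m$. Each such subspace is a finite dimensional algebraic representation of $G$. The first step is to check that for $g\in G(\mbR)$ the algebraic action of $g$ on $\mcD$ agrees with the analytic action $g\circ\mcD\circ g^{-1}$ on $\mcS(V(\mbR))$. Both actions send a multiplication operator $f$ to the translate $g\cdot f$. Both send a constant-coefficient derivative $\partial_v$ to $\partial_{gv}$. Both are multiplicative. Since these elements generate $\mbC\{V\}$, the two actions agree. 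As an operator in $\mbC\{V\}$ is determined by its action on $\mcS(V(\mbR))$, it follows that $\mcD$ is $G(\mbR)$-invariant as an operator on Schwartz functions if and only if $g\cdot\mcD=\mcD$ in $\mbC\{V\}$ for all $g\in G(\mbR)$.

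\textbf{Passing between $G(\mbR)$ and $G$.} It then remains to show that an element of a finite dimensional algebraic representation $W$ is fixed by $G$ if and only if it is fixed by $G(\mbR)$. One direction is trivial. For the other, the fixed locus of $\mcD$ is $\{g : g\mcD=\mcD\}$, the stabilizer of $\mcD$, which is a Zariski closed subgroup of $G$ containing $G(\mbR)$. Since $G$ is connected reductive over $\mbR$, hence unirational, $G(\mbR)$ is Zariski dense in $G$. Therefore the stabilizer is all of $G$.

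\textbf{Main obstacle.} The point needing care is the precise meaning of ``$G$-invariant''. It should mean invariance under $G(\mbC)$ acting on the complexified structure, equivalently that the coaction of $\mbC[V]\otimes\mbC[G]$ fixes $\mcD$; real-point density handles this in either formulation. I expect the only nonformal input to be the Zariski density of $G(\mbR)$, which is standard (Rosenlicht/Borel) for connected groups over a field of characteristic $0$.
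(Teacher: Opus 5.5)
Your proof is correct and follows the same route as the paper, whose entire argument is that $G(\mbR)$ is Zariski dense in the connected group $G$. Your extra steps fill in what the paper leaves implicit: the reduction to finite-dimensional $G$-stable pieces, and the check on multiplication operators and on $\partial_v$ that the algebraic action agrees with conjugation $g\circ\mcD\circ g^{-1}$ on $\mcS(V(\mbR))$.
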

\begin{proof}
This is clear because $G(\mbR)$ is Zariski dense in $G$ under our assumption on $G$ being connected.
\end{proof}

\subsection{Orbital integrals}
\label{ss:orb_ints_abstractly}
 Recall that a $G(\mbC)$ orbit in $V(\mbC)$ is said to be regular semi-simple if it is Zariski closed and if its stabilizer has minimal possible dimension. Also recall that the regular semi-simple orbits form an open subvariety $V_\rs \subseteq V$ defined over $\mbR$. We assume from now on that $V_\rs$ is non-empty and that generic stabilizers are trivial. Let
$$π:V\lr \mcQ := \Spec(\mcA)$$
denote the categorical quotient. Let $\mcQ_\rs\subseteq \mcQ$ be the image of $V_\rs$. Recall that $\mcQ_\rs$ is open and that $V_\rs = π^{-1}(\mcQ_\rs)$. Fix a left Haar measure on $G(\mbR)$ and a continuous finite order character $η:G(\mbR)\to \mbC^\times$. Let $\wt{η}:V(\mbR)_\rs\to \mbC^\times$ be a locally constant function that satisfies $\wt{η}(h^{-1}x) = η(h)\wt{η}(x)$ for all $h\in G(\mbR)$, the so-called \emph{transfer factor}. Then, for every Schwartz function $Φ\in \mcS(V(\mbR))$ and every $x\in V(\mbR)_\rs$, there is an absolutely convergent orbital integral
$$\Orb(x, Φ) := \wt{η}(x)\int_{G(\mbR)} η(g)Φ(g^{-1}\cdot x)\,dg$$
which only depends on the orbit $G(\mbR)\cdot x$. The image of $V_\rs(\mbR)\to \mcQ_\rs(\mbR)$ is a union of connected components, so we can use extension by zero to view it as a smooth function
$$\Orb(-,Φ):\mcQ_\rs(\mbR)\lr \mbC.$$
Elements of $D(\mcA)$ act on $\mcC^\infty(\mcQ_\rs(\mbR))$ by differentiation.
\begin{proposition}\label{prop:diff_op_orb_int_generic}
Let $\mcD\in \mbC\{V\}^G$ be an invariant differential operator. Then, for every $Φ\in \mcS(V(\mbR))$,
$$\Orb(-, \mcD(Φ)) = (\mcD\vert_\mcA)(\Orb(-, Φ)).$$
\end{proposition}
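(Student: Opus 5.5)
The plan is to work locally near a point of $V(\mbR)_\rs$ in coordinates adapted to the fibration $π$, with orbit directions on one side and the quotient on the other. The underlying observation is that an invariant operator $\mcD$ acts on $G(\mbR)$-invariant smooth functions on $V(\mbR)_\rs$. On such functions it acts through its restriction to the quotient, because $V_\rs\to \mcQ_\rs$ is smooth with fibers principal homogeneous spaces. Concretely, since $V_\rs = π^{-1}(\mcQ_\rs)$ is smooth over $\mcQ_\rs$ and stabilizers are trivial, $π$ is a $G$-torsor over $\mcQ_\rs$, at least étale locally.

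\textbf{Step 1: compatibility with pullback.} Show that for every smooth function $f$ on $\mcQ_\rs(\mbR)$ we have
$$\mcD(f\circ π) = \big((\mcD\vert_\mcA) f\big)\circ π \quad\text{on } V(\mbR)_\rs.$$
This holds by definition when $f\in \mcA$. Both sides are local differential operators in $f$, since $\mcD(\,\cdot\circ π)$ is a differential operator on $\mcQ_\rs$ by Lemma \ref{lem:restrict_diff_op} applied after localization. Near any point, $\mcA$ generates local coordinates on $\mcQ_\rs$, because $π$ is smooth there. Two differential operators of finite order that agree on all polynomials in a coordinate system agree on all smooth functions, by a Taylor expansion argument. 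This gives Step 1.

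\textbf{Step 2: write the orbital integral as a fiber integral.} On $V(\mbR)_\rs$ define
$$F(x) := \int_{G(\mbR)} η(g)\,Φ(g^{-1}x)\,dg.$$
It satisfies $F(hx) = η(h)F(x)$, so $\wt{η}F$ is $G(\mbR)$-invariant and equals $\Orb(-,Φ)\circ π$. Since $\wt{η}$ is locally constant, it commutes with $\mcD$. Differentiating under the integral sign and using the $G(\mbR)$-invariance of $\mcD$ (Lemma \ref{lem:invariant_diff_ops}) gives
$$\mcD F(x) = \int_{G(\mbR)} η(g)\,\big(g^{-1}\cdot\mcD\big)\Phi^{(g)}(x)\,dg = \int_{G(\mbR)} η(g)\,(\mcD Φ)(g^{-1}x)\,dg.$$
Multiplying by $\wt{η}$, we get $\mcD(\Orb(-,Φ)\circ π) = \Orb(-,\mcDΦ)\circ π$. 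Combining with Step 1 yields
$$\big((\mcD\vert_\mcA)\Orb(-,Φ)\big)\circ π = \Orb(-,\mcDΦ)\circ π.$$
Since $π$ is surjective onto the relevant components, the identity holds on the image of $V(\mbR)_\rs$. Outside that image both sides vanish, by extension by zero and locality of differential operators.

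\textbf{Main obstacle.} The hardest step should be justifying differentiation under the integral sign. This requires showing that $g\mapsto (\partial^\alpha Φ)(g^{-1}x)$ is integrable, locally uniformly in $x\in V(\mbR)_\rs$. I would deduce this from the standard convergence of orbital integrals, which is locally uniform on compact subsets of the regular semisimple locus, applied to the Schwartz functions $\partial^\alpha Φ$ and to majorants of the form $(1+|y|)^{-N}$. A second, lesser point in Step 1 is making sure the restricted operator is genuinely defined on smooth functions on the real locus. I would handle this via local étale coordinates given by elements of $\mcA$, using the implicit function theorem at real points.
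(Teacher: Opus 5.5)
Your proof is correct, but it takes a different route from the paper's.

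\textbf{The paper's route.} The paper fixes a local section and a trivialization $G(\mbR)\times U\simto\pi^{-1}(U)$. It uses invariance to write $\mcD=\sum\pi^*(f_{\alpha,\beta})X^\alpha\partial^\beta$, with $X$ left-invariant vector fields on $G(\mbR)$, and reads off $\mcD|_{\mcC^\infty(U)}=\sum_\beta f_{0,\beta}\partial^\beta$. It then discards every term with $\alpha\neq 0$ because $\int_{G(\mbR)}X(\Psi)\,dg=0$. Only the transversal derivatives $\partial^\beta$ are passed through the integral.

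\textbf{Your route.} You use invariance in the form $\mcD\circ g=g\circ\mcD$ to commute $\mcD$ with the whole averaging operator $\Phi\mapsto\int\eta(g)\,(g\cdot\Phi)\,dg$. You put the algebraic content into the identity $\mcD(f\circ\pi)=((\mcD|_\mcA)f)\circ\pi$ for all smooth $f$.

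\textbf{What each buys.} Your route needs no section, no normal form for invariant operators on $G\times U$, and no vanishing of integrals of orbit derivatives. Your jet argument in Step 1 also makes explicit a step the paper passes over: the paper only notes that $\mcD|_{\mcC^\infty(U)}$ and $\mcD|_\mcA$ agree on $\mcA$. In exchange, the paper's route shows directly that the orbit-directional part of $\mcD$ is killed by $\Orb$. This is the same mechanism behind the factorization of orbital integrals through $\mfg$-coinvariants used later.

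\textbf{Two small corrections.}

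In Step 1, $f\mapsto\mcD(f\circ\pi)$ is a differential operator along $\pi$ by the chain rule. Lemma \ref{lem:restrict_diff_op} is what makes the other side, $\mcD|_\mcA$, an operator on $\mcQ_\rs$. The jet comparison only needs both sides to depend on a finite jet of $f$ at $\pi(v)$.

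In your analytic step, the $x$-derivatives of $\Phi(g^{-1}x)$ are not just $(\partial^\alpha\Phi)(g^{-1}x)$. They also carry matrix coefficients of $g^{-1}$, which are unbounded on $G(\mbR)$, so your proposed majorant does not cover them as stated. There are two ways to fix this.
\begin{itemize}
\item Since $V_\rs\to\mcQ_\rs$ is a $G$-torsor, $g$ is a regular function of the pair $(g^{-1}x,x)$. Hence these coefficients are bounded by a power of $1+|g^{-1}x|$, locally uniformly in $x$, and the Schwartz decay absorbs them.
\item Alternatively, pair with a test function on $V(\mbR)_\rs$, move $\mcD$ to its formal transpose, and apply Fubini. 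This needs only the locally uniform absolute convergence of the orbital integrals of $\Phi$ and $\mcD\Phi$.
\end{itemize}
The same polynomial factors appear in the paper's differentiation in the $u$-directions, where they are not addressed either.
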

\begin{proof}
The restriction $\mcD\vert_\mcA$ is defined by Lemma \ref{lem:restrict_diff_op}. By our assumption on stabilizers, the morphism $π^{-1}(\mcQ_\rs(\mbR))\to \mcQ_\rs(\mbR)$ is a $G(\mbR)$-bundle over its image. Working locally on open subsets $U$ of the image, we can assume that there exists a section $U\to V(\mbR)_\rs$ and hence a trivialization
\begin{equation}\label{eq:trivialize_pi}
G(\mbR)\times U \simlr π^{-1}(U).
\end{equation}
Let $X_1,\ldots,X_r \in \mfg$ be a basis for the Lie algebra of $G(\mbR)$ viewed as left-invariant vector fields on $G(\mbR)$. Working locally on $U$, we may further assume that there exist smooth coordinate functions $x_1,\ldots,x_m$ on $U$. Let $\partial_1,\ldots, \partial_m$ denote the corresponding partial derivatives. Then $\mcD$ being $G(\mbR)$-invariant implies that there exist smooth functions $f_{α,β}$ on $U$, where $α\in \mbZ_{\geq 0}^r$ and $β\in \mbZ_{\geq 0}^m$ are multi-indices, such that, with respect to \eqref{eq:trivialize_pi},
$$\mcD\vert_{π^{-1}(U)} = \sum_{α, β} π^*(f_{α,β}) X^α\partial^β.$$
We obtain that
$$\mcD\vert_{\mcC^\infty(U)} = \sum_{β} f_{(0,\ldots,0),\, β} \partial^β.$$
This restriction agrees with $\mcD\vert_{\mcA}$ because every element of $\mbC[V]$ is uniquely determined by the function it defines on $π^{-1}(U)$.

Let $X\in \mfg$ be an element viewed as left-invariant vector field on $G(\mbR)$, let $Φ\in \mcS(V(\mbR))$ be a Schwartz function, and let $x\in V(\mbR)_\rs$ be a regular semi-simple element. Let $Ψ(g) := η(g)Φ(g^{-1}\cdot x)$ be the Schwartz function obtained from $η$ and by restricting $Φ$ to the orbit $G(\mbR)\cdot x$. Since $η$ is locally constant as function on $G(\mbR)$, we find
\begin{equation}\label{eq:orb_int_vanishing_derivative}
\begin{aligned}
\Orb(x, X(Φ)) &\ =\ \wt{η}(x)\int_{G(\mbR)} η(g) (X(Φ))(g^{-1}\cdot x)\, dg\\[1mm]
&\ =\ \wt{η}(x)\int_{G(\mbR)} (X(Ψ))(g)\,dg\\
&\ =\ 0
\end{aligned}
\end{equation}
because the integral of the derivative of a Schwartz function is zero. Hence, as functions in $u\in U$ and using the coordinates from \eqref{eq:trivialize_pi}, we obtain
\begin{equation}\label{eq:orb_int_restriction_diffop}
\begin{aligned}
\Orb(u, \mcD(Φ)) &\ =\ \wt{η}(1,u) \int_{G(\mbR)} η(g) \mcD(Φ)(g,u)\,dg\\[1mm]
&\ =\ \wt{η}(1,u) \sum_{β} f_{(0,\ldots,0),\, β}(u) \Big(\partial^β \int_{G(\mbR)} η(g)Φ(g,-)\,dg\Big)(u).
\end{aligned}
\end{equation}
The transfer factor is locally constant by assumption, hence commutes with the $\partial^β$, so \eqref{eq:orb_int_restriction_diffop} agrees with $\mcD\vert_{\mcA}(\Orb(-, Φ))$ as claimed.
\end{proof}

\section{The Weil representation}

In this section, we recall the infinitesimal Weil representation for $\mfsl_2$ and fix our conventions for Fock models. A general reference is \cite[Appendix A]{FunkeMillson}, from which our presentation can be obtained by specialization (see Remark \ref{rmk:compatib_FM}). References for $\Mp_2$ are \cite[(1.6), (1.7)]{Kudla_Annals} or \cite[(11.1)]{Zhang_AFL}.

\subsection{The metaplectic group}

The fundamental group of $\SL_2(\mbR)$ is cyclic of infinite order, so there exists a unique connected double cover
\begin{equation}\label{eq:metaplictic_cover}
1\lr μ_2 \lr \Mp_2 \lr \SL_2(\mbR)\lr 1
\end{equation}
called the metaplectic group. Recall that $\Mp_2$ is a Lie group but not an algebraic group; it has no finite-dimensional faithful representations. An explicit description of $\Mp_2$ is given by the Rao cocycle \cite[p. 363]{Kudla_splitting}. We identify $\mbR^\times/\mbR^{\times,2}$ with $\{\pm 1\}$ and define $χ:\SL_2(\mbR)\lr \{\pm 1\}$ by
$$χ\left(\begin{pmatrix}
a & b \\ c & d
\end{pmatrix}\right)\ =\ \begin{cases} \text{$c\ $ mod $\mbR^{\times,2}$} & \text{if $c\neq 0$}\\
\text{$d\ $ mod $\mbR^{\times, 2}$} & \text{if $c = 0$.}
\end{cases}$$
Let $\langle\ ,\ \rangle$ denote the Hilbert symbol on $\mbR^\times/\mbR^{\times,2}$, meaning $\langle ε_1, ε_2\rangle = -1$ if and only if  $\epsilon_1$ and $\epsilon_2$ are negative. The Rao cocycle is given by
\begin{equation}\label{eq:Rao_cocycle}
c(g_1, g_2) := \langle χ(g_1g_2), -χ(g_1)χ(g_2)\rangle\cdot \langle χ(g_1), χ(g_2)\rangle.
\end{equation}
There exists a set-theoretic section $\SL_2(\mbR) \to \Mp_2$ that induces an identification $\SL_2(\mbR)\times μ_2\simto \Mp_2$, for which multiplication is given by
\begin{equation}\label{eq:mult_metaplectic}
[g_1,\, ε_1]\cdot [g_2,\, ε_2] = [g_1g_2, \ ε_1ε_2c(g_1,g_2)].
\end{equation}
We use the standard notation
$$\begin{aligned}
& A = \{m(a) \mid a\in \mbR_{>0}\}, \quad m(a) = \begin{pmatrix}
a & \\ & 1/a
\end{pmatrix},\\[1mm]
& N = \{n(b) \mid b\in \mbR\}, \mkern 31mu \quad n(b) = \begin{pmatrix}
1 & b \\ & \mkern 9mu 1 \mkern 9mu
\end{pmatrix},
\end{aligned}$$
and  $B^+ = AN$. We  also consider the lower triangular group
$$B^- = \left.\left\{\begin{pmatrix}a & \\ b & 1/a\end{pmatrix} \in \SL_2(\mbR)\ \right \vert\ a > 0\right\}.$$
It is clear from \eqref{eq:Rao_cocycle} and \eqref{eq:mult_metaplectic} that the section $g\mapsto [g,1]$ restricts to continuous group homomorphisms
$$B^{\pm}\lr \Mp_2$$
describing the unique splittings of the metaplectic cover over the simply connected subgroups $B^+$ and $B^-$. We write $[B^\pm, 1] \subset \Mp_2$ for their images.

The Lie algebra of $\Mp_2$ is naturally identified with $\mfsl_2$ via the covering map to $\SL_2(\mbR)$. The Lie algebras $\mfb^{\pm}$ of $B^{\pm}$ satisfy $\mfb^+ + \mfb^- = \mfsl_2$. Since $\Mp_2$ is connected, this implies that $[B^+, 1]$ and $[B^-,1]$ generate $\Mp_2$. Let
$$w = \begin{pmatrix} & 1\\ -1 & \end{pmatrix}$$
be the Weyl element in $\SL_2(\mbR)$. Then $[w,1]^{-1} = [w^{-1},1]$ and
$$[w^{-1},1]\cdot [B^+,1]\cdot [w,1] = [B^-,1].$$
We clearly also have $\mr{ad}([w,1])\cdot \mfb^+ = \mfb^-$. The following lemma is an immediate consequence.
\begin{lemma}\label{lem:gen_Mp2}
The metaplectic group is generated by either of $[w,1]$ and $[B^+,1]$ or $[w,1]$ and $[B^-,1]$.
\end{lemma}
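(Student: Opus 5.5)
The plan is to show that the subgroup $H\subseteq \Mp_2$ generated by $[w,1]$ and $[B^+,1]$ equals $\Mp_2$. The second pair, $[w,1]$ and $[B^-,1]$, will then follow by the same argument with the roles of $B^+$ and $B^-$ exchanged.

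First, $H$ contains $[B^-,1]$. This is because $[w,1]^{-1}=[w^{-1},1]$, so
$$[B^-,1] = [w^{-1},1]\,[B^+,1]\,[w,1] \subseteq H.$$
Symmetrically, the subgroup generated by $[w,1]$ and $[B^-,1]$ contains $[w,1]\,[B^-,1]\,[w,1]^{-1} = [B^+,1]$. So both candidate subgroups equal the subgroup $H'$ generated by $[B^+,1]$ and $[B^-,1]$, and it suffices to show $H'=\Mp_2$.

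Next, $[B^\pm,1]$ are images of continuous homomorphisms from the connected Lie groups $B^\pm$, so they are connected Lie subgroups of $\Mp_2$ with Lie algebras $\mfb^\pm$. Consider the product map
$$[B^+,1]\times[B^-,1]\lr \Mp_2,\qquad (b_1,b_2)\mapsto b_1b_2.$$
Its differential at the identity is $(X,Y)\mapsto X+Y$, from $\mfb^+\oplus\mfb^-$ onto $\mfsl_2$. This is surjective because $\mfb^++\mfb^-=\mfsl_2$. Hence the image contains an open neighbourhood of the identity, and so $H'$ contains such a neighbourhood.

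A subgroup containing a neighbourhood of the identity is open, and every open subgroup is also closed, since its complement is a union of cosets. As $\Mp_2$ is connected, this gives $H'=\Mp_2$. No step is a real obstacle. The only point needing care is that $[w,1]^{-1}=[w^{-1},1]$, which the text already records and which can be checked from $c(w,w^{-1})=1$ in \eqref{eq:Rao_cocycle}.
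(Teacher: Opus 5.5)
Your proof is correct and is essentially the paper's argument: conjugation by $[w,1]$ interchanges $[B^+,1]$ and $[B^-,1]$, and $\mfb^+ + \mfb^- = \mfsl_2$ together with connectedness of $\Mp_2$ shows that $[B^+,1]$ and $[B^-,1]$ already generate. You make the paper's one-line connectedness step explicit: the product map is a submersion at the identity, and an open subgroup of a connected group is everything. One wording nit: before you know $H'=\Mp_2$, each candidate subgroup only \emph{contains} $H'$ rather than equals it, but containment is all your reduction uses.
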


\subsection{The Weil representation}
\label{ss:Weil_rep}
Consider a real quadratic space $(V,Q)$ of dimension $n$ and signature $(p,q)$. Denote by $(\cdot, \cdot)$ the corresponding symmetric bilinear form; it is determined by $2 Q(v) = (v,v)$. Recall that $\mcS(V)$ denotes the space of Schwartz functions on $V$. Fix the additive character $ψ(t) = e^{2πit}$ on $\mbR$. Recall that the Fourier transform for $V$ is the $\mr{O}(V)$-linear map
$$\mfF:\mcS(V)\lr \mcS(V),\quad (\mfF φ)(x) = \int_V φ(y)ψ\big((x,y)\big)\,dy$$
where the measure is normalized such that $(\mfF \mfF φ)(x) = φ(-x)$. The \emph{Weil representation} of $\Mp_2$ on $\mcS(V)$ with respect to $ψ$ is the representation
$$\Omega:\Mp_2 \lr \End_{\mr{O}(V)}(\mcS(V))$$
which is given by the following formulas. Let $γ = \exp((p-q)πi/4)$ be the Weil index of $V$. Then
\begin{equation}\label{eq:Weil_group}
\begin{aligned}
\big(\Omega[m(a),1]\cdot φ\big)(x) & \ =\ |a|^{n/2}φ(ax)\quad \mkern 32mu (a > 0),\\[1mm]
\big(\Omega[n(b),1]\cdot φ\big)(x) & \ =\ ψ(bQ(x))φ(x)\quad \ \ (b\in \mbR),\\[1mm]
\big(\Omega[w,1]\cdot φ\big)\mkern 24mu & \ =\ γ\cdot \mfF φ.
\end{aligned}
\end{equation}
Note that $[w,1]^4 = [1, -1]$ which shows that $[1, -1]$ acts by $γ^4 = (-1)^{p-q}$. Thus, if $n$ is even, then $\Omega$ factors through $\SL_2(\mbR)$. If $n$ is odd, then $μ_2$ acts non-trivially.

\subsection{The infinitesimal Weil representation}
\label{ss:infinitesimal_Weil_rep}

The \emph{infinitesimal Weil representation} on $\mcS(V)$ with respect to $ψ$ is the Lie algebra representation
$$ω:\mfsl_2(\mbC) \lr \End_{\mr{O}(V)}(\mcS(V))$$
which is obtained by differentiation from \eqref{eq:Weil_group}. In order to give an explicit description, we choose standard coordinate functions $x_1,\ldots,x_n$ on $V$. That is, we assume that $Q$ is given by
$$Q = \frac{1}{2}\Big(\sum_{α = 1}^p x_α^2 - \sum_{μ = p+1}^n x_μ^2\Big).$$
In terms of such coordinates, $ω$ is defined by the differential operators
\begin{equation}\label{eq:Weil_Lie_algebra}
\begin{aligned}
ω\begin{pmatrix}
1 & \\ & -1
\end{pmatrix} &\ =\ \frac{n}{2} + \sum_{i = 1}^n x_i \frac{\partial}{\partial x_i} \\[1mm]
ω\begin{pmatrix}
& 1 \\ \ &
\end{pmatrix} &\ =\ 2πiQ\\[1mm]
ω\begin{pmatrix}
&\  \\ 1 &
\end{pmatrix} &\ =\ \frac{i}{4π}\Big(\sum_{α = 1}^p \frac{\partial^2}{\partial x_α^2} - \sum_{μ = p+1}^n \frac{\partial^2}{\partial x_μ^2}\Big).
\end{aligned}
\end{equation}
Indeed, the first two identities are obtained directly from the formulas for the action of $B$ in \eqref{eq:Weil_group}. The third identity is obtained from $\left(\begin{smallmatrix} &\ \\1 &\end{smallmatrix}\right) = \mr{ad}([w,1])\left(\left(\begin{smallmatrix} \ & -1 \\ &\end{smallmatrix}\right)\right)$ and hence
$$ω\left(\begin{smallmatrix} &\ \\1 &\end{smallmatrix}\right) = \mfF \circ ω\left(\begin{smallmatrix} \ & -1 \\ &\end{smallmatrix}\right)\circ \mfF^{-1}.$$

\subsection{Algebraicity of $ω$}
\label{ss:infinitesimal_Weil_rep_algebraic}

We observe that the infinitesimal Weil representation is a purely algebraic construction. Let $k$ be a field of characteristic different from $2$, let $V$ be an $n$--dimensional $k$-vector space, and let $Q\in k[V]$ be a non-degenerate quadratic form. That is, $Q$ is an element of $\mr{Sym}^2(V^*)$ of the form $Q(v) = B(v,v)/2$ for a non-degenerate symmetric bilinear form $B$. In particular, $B$ is an isomorphism $B:V\simto V^*$ so we also have a dual element $Q^* = B^{-1}(Q) \in \mr{Sym}^2(V)$. Let $k \{ V\}$ denote the Weyl algebra, and view $V \subset k\{ V\}$ by associating a vector $v$ to the partial derivative $\partial_v$.  Given any $λ\in k^\times$, we define a representation $\mfsl_2 \to k\{V\}$ by the formulas
\begin{equation}\label{eq:Weil_Lie_algebra_algebraic}
\begin{aligned}
ω\left(\begin{smallmatrix}
1 & \\ & -1
\end{smallmatrix}\right) &\ =\ \frac{n}{2} + \text{(degree operator)}\\[1mm]
ω\left(\begin{smallmatrix}
& 1 \\ \ &
\end{smallmatrix}\right) &\ =\ λ Q\\[1mm]
ω\left(\begin{smallmatrix}
&\  \\ 1 &
\end{smallmatrix}\right) &\ =\ -Q^*/λ.
\end{aligned}
\end{equation}
When specialized to $k = \mbR$, $λ = 2πi$, and after choosing a standard basis for $(V, Q)$, this recovers \eqref{eq:Weil_Lie_algebra}. The point of \eqref{eq:Weil_Lie_algebra_algebraic} is that these formulas are obviously invariant under field extension and preserved under isometries $(V, Q_V)\simto (W, Q_W)$.

\subsection{The Fock model} \label{ss:Fock model}

Recall that $\mbC[V] = \mbC[x_1,\ldots,x_n]$ is the ring of polynomial functions on $V$ and that
$$\mbC\{x_1,\ldots,x_n\} := \mbC[x_1,\ldots,x_n, \partial/\partial x_1,\ldots, \partial/\partial x_n]$$
denotes the Weyl algebra. We denote by $φ_S$ the \emph{Siegel Gaussian} with respect to our chosen standard basis,
\begin{equation} \label{eq:Siegel Gaussian general}
\varphi_S = e^{- \pi(x_1^2\,+\, \ldots\, +\, x_n^2)},
\end{equation}
and call $\mcP(V) := \mbC[V]\cdot φ_S$ the space of \emph{polynomial Schwartz functions}. It is clearly stable under the differentiation action of $\mbC\{V\}$. In particular, it is preserved by the infinitesimal Weil representation.

The \emph{lowering}, \emph{raising}, and \emph{weight operators} in $\mfsl_2(\mbC)$ are the three elements
\begin{equation}\label{eq:LRH}
L := \frac12 \begin{pmatrix}  1 & -i \\ -i  & -1 \end{pmatrix}, \qquad R := \frac12 \begin{pmatrix} 1 &i \\ i & -1 \end{pmatrix} , \quad\text{and}\quad H := \begin{pmatrix} 0 & -i \\ i & 0 \end{pmatrix}.
\end{equation}
They form a basis of $\mfsl_2(\mbC)$ and are related by $H = [R,L]$. The space of polynomial Schwartz functions is a direct sum of its $H$-eigenspaces, with eigenvalues in $(p-q)/2 + \mbZ$. These eigenvalues are called weights. However, the weight decomposition of a polynomial Schwartz function $f\cdot φ_S$ is not immediate to read off from $f$. For this reason, we next recall the Fock model for $\mcP(V)$ in which weights agree with the monomial degree up to the shift $(p-q)/2$.
\begin{definition}[Fock model]\label{def:Fock}
Let $z_1,\ldots,z_n$ be a second set of coordinates. The \emph{Fock model} for $\mcP(V)$ with respect to these coordinates is the vector space
$$\mcF(V) := \mbC[z_1,\ldots,z_n].$$
It is a cyclic module for the action of $\mbC\{z_1,\ldots,z_n\}$. Consider the ring isomorphism
$$ι:\mbC\{x_1,\ldots,x_n\} \simlr \mbC\{z_1,\ldots,z_n\}$$
given by the substitution rules
\begin{equation}\label{eq:Fock_model_intertwiner}
ι\left(x_i - \frac{1}{2π}\frac{\partial}{\partial x_i}\right)\, =\, \frac{1}{2π} z_i, \ \qquad\ ι\left(x_i + \frac{1}{2π}\frac{\partial}{\partial x_i}\right)\, =\, 2 \frac{\partial}{\partial z_i}.
\end{equation}
The intertwining map between polynomial Schwartz functions and Fock model is the unique $ι$-linear isomorphism
$$ι:\mcP(V)\simlr \mcF(V)$$
that sends $φ_S$ to $1$. In other words, $ι(f\cdot φ_S) = ι(f)(1)$ and $ι^{-1}(h) = ι^{-1}(h)(φ_S)$ for all $f\in \mbC[V]$ and $h\in \mcF(V)$.
\end{definition}
One can now apply $ι$ to the three operators in \eqref{eq:Weil_Lie_algebra} to express $ω(L)$, $ω(R)$ and $ω(H)$ as elements of $\mbC\{z_1,\ldots,z_n\}$. This results in
\begin{equation}\label{eq:LRH_Fock}
\begin{split}
\omega(L) &= 2 \pi \sum_{\alpha = 1}^{p} \frac{\partial^2}{\partial z_{\alpha}^2} - \frac{1}{8 \pi} \sum_{\mu = p+1}^{n} z_{\mu}^2 \\[1mm]
\omega(R) &=  -\frac1{8 \pi} \sum_{\alpha = 1}^{p}  z_{\alpha}^2  + 2 \pi \sum_{\mu = p+1}^{n} \frac{\partial^2}{\partial z_{\mu}^2}\\[1mm]
\omega(H) &= \sum_{\alpha = 1}^p z_{\alpha} \frac{\partial}{\partial z_{\alpha}} - \sum_{\mu = p+1}^n z_{\mu} \frac{\partial}{\partial z_\mu} +  \frac{p-q}{2}.
\end{split}
\end{equation}
In particular, we see that monomials in $\mcF(V)$ are eigenvectors for $ω(H)$, and that the weight of $z_1^{m_1}\cdots z_n^{m_n}$ is given by
\begin{equation}\label{eq:weight_monomial}
\frac{p-q}{2} + \sum_{α = 1}^p m_α - \sum_{μ = p+1}^n m_μ.
\end{equation}
We end this section with a simple identity relating $L, R, H$ and $Q$. Observe for this (see \eqref{eq:LRH}) that
$$R - H - L = 2i\begin{pmatrix}
& 1\\ &
\end{pmatrix}.$$
By the second identity in \eqref{eq:Weil_Lie_algebra}, we obtain
\begin{equation}\label{eq:LRHQ}
ω(R) - ω(H) - ω(L) = -4πQ
\end{equation}
as elements of $\mbC\{V\}$. Moreover, $R$, $H$ and $L$ are homogeneous of weights $2$, $0$ and $-2$, respectively. So we see that these operators are (up to sign) precisely the homogeneous components of $-4πQ$. We see from \eqref{eq:Fock_model_intertwiner} that for every pair $(x,z) = (x_i, z_i)$,
$$\begin{aligned}
-2π x^2 & = -2π (z/4π + \partial_z)^2\\
& = -z^2/8π - (1/2 + z\partial_z) - 2π\partial_z^2.
\end{aligned}$$
Taking homogeneous components in the sense of \eqref{eq:weight_monomial} and taking the sum according to $-4πQ = -2π(x_1^2 + \ldots +x_p^2 - x_{p+1}^2 -\ldots-x_n^2)$ recovers \eqref{eq:LRH_Fock} from \eqref{eq:LRHQ}.

\begin{remark}\label{rmk:compatib_FM}
Let $z_α'$ and $z'_μ$ denote the Fock model coordinates in \cite[Appendix A]{FunkeMillson}. Our conventions are obtained from those there by specialization to $\mfsl_2$ and $λ = 2πi$, and by substituting
$$z'_α = iz_α,\quad z'_μ = -iz_μ,\quad 1\leq α \leq p,\ p+1\leq μ \leq n.$$
\end{remark}

\section{The Jacquet--Rallis setting}

\subsection{The $\GL_n$-side}

Fix an integer $n\geq 1$. The group $\GL_n$ acts on $\mfgl_{n+1}$ by conjugation via $g\mapsto \diag(g,1)$. The regular semi-simple locus $\mfgl_{n+1,\rs}$ is non-empty and its elements have trivial stabilizer, so this setting fits into the framework from \S \ref{ss:orb_ints_abstractly}. We usually use the block matrix notation
\begin{equation}\label{eq:standard_notation_GL}
\begin{pmatrix}
y & v\\ w & d
\end{pmatrix},\quad y\in \mfgl_n,\ \ v\in \mbA^n,\ \ w\in (\mbA^n)^t,\ \ d\in \mbA^1
\end{equation}
to denote its elements. The ring of invariants $\mcA' = \mbC[\mfgl_{n+1}]^{\GL_n}$ is a polynomial ring in $2n+1$ variables generated by
\begin{equation}\label{eq:invariant_functions_GL}
\tr(y),\ \ \tr(\wedge^2 y),\ \ \ldots,\ \ \tr(\wedge^n y),\ \ wv,\ \ wyv,\ \ \ldots,\ \ wy^{n-1}v,\ \ d.
\end{equation}
The categorical quotient $\mcQ' = \Spec(\mcA')$ is hence $\mbA^{2n+1}$ via \eqref{eq:invariant_functions_GL}, and $\mcQ'(\mbR) = \mbR^{2n+1}$. Let
\begin{equation}\label{eq:quot_map_gl}
π:\mfgl_{n+1} \lr \mcQ'
\end{equation}
be the quotient morphism. An element $Y = (y, v, w, d) \in \mfgl_{n+1}(\mbR)$ as in \eqref{eq:standard_notation_GL} is regular semi-simple if and only if
$$\mbR[y]\cdot v = \mbR^n\quad \text{and}\quad w\cdot \mbR[y] = \mbR_n.$$
Here and later, we often write $\mbR_n := (\mbR^n)^t$ for the space of row vectors. Let $$η:= \mathrm{sgn} \circ \det \colon \GL_n(\mbR)\to \{\pm 1\}$$ be the non-trivial continuous quadratic character. We use the transfer factor already used in \cite{Zhang_GGP, MSY, Xue} which is defined as
\begin{equation}\label{eq:transfer_factor}
\begin{aligned}
\wt{η}:\mfgl_{n+1,\rs}(\mbR) & \lr \{\pm 1\}\\
(y, v, w, d) & \longmapsto \mathrm{sgn}\big(\det(v, yv, \ldots, y^{n-1}v)\big).
\end{aligned}
\end{equation}

 Fix a Haar measure $dg$ on $\GL_n(\bbR)$.
For a Schwartz function $Φ\in \mcS(\mfgl_{n+1}(\mbR))$ and a regular semi-simple element $Y\in \mfgl_{n+1,\rs}(\mbR)$, we then have the orbital integral
$$\Orb(Y, Φ) = \wt{η}(Y)\int_{\GL_n(\mbR)} Φ(g^{-1}\cdot Y) η(g)\, dg.$$
It only depends on the orbit $\GL_n(\mbR)\cdot Y$, and we have $π(\mfgl_{n+1,\rs}(\mbR)) = \mcQ'_\rs(\mbR)$, so we may view it as a smooth function
$$\Orb(-,Φ):\mcQ'_\rs(\mbR) \lr \mbC.$$

\subsection{The unitary side}
Let $V$ be an $n$-dimensional hermitian $\mbC$-vector space. Denote by $V\oplus \mbC$ its orthogonal direct sum with the standard one-dimensional hermitian space $(\mbC, \overbar x y)$. Let $\mfu(V)$ and $\mfu(V\oplus \mbC)$ denote the Lie algebras of $U(V)$ and $U(V\oplus \mbC)$. Then $U(V)$ acts on $\mfu(V\oplus \mbC)$ by conjugation, which is another incarnation of the setting in \S\ref{ss:orb_ints_abstractly}. We usually use the block matrix notation
\begin{equation}\label{eq:standard_notation_U}
\begin{pmatrix}
x & u\\ -u^* & e
\end{pmatrix},\quad x\in \mfu(V),\ \ u \in V,\ \ u^* = (u,-),\ \ e\in i\cdot \mbA^1_\mbR
\end{equation}
to denote elements of $\mfu(V\oplus \mbC)$. An element $(x,u,e)\in \mfu(V\oplus \bbC)(\mbR)$ is regular semi-simple if and only if $\mbC[x]\cdot u = V$. The ring of invariant functions $\mbC[\mfu(V\oplus \mbC)]^{U(V)}$ is a polynomial ring in $2n+1$ variables generated by
\begin{equation}\label{eq:invariant_functions_U}
\tr(x)/i,\ \tr(\wedge^2 x)/i^2,\ \ldots,\ \tr(\wedge^n x)/i^n,\ (u,u),\ (u,xu)/i,\ \ldots,\ (u,x^{n-1}u)/i^{n-1},\ e/i.
\end{equation}
The normalization by powers of $i$ is done to ensure that all these functions take values in the base ring (which is $\mbR$ for $\mbR$-points). Matching \eqref{eq:invariant_functions_GL} and \eqref{eq:invariant_functions_U} term by term provides an identification
\begin{equation}\label{eq:ident_inv_functions}
\mbC[\mfgl_{n+1}]^{\GL_n} \simlr \mbC[\mfu(V\oplus \mbC)]^{U(V)}.
\end{equation}
In this way, we have defined a quotient map
$$π_V:\mfu(V\oplus \mbC)\lr \mcQ'$$
to the same variety $\mcQ'$ as in \eqref{eq:quot_map_gl}.  Fix a Haar measure $dg$ on $U(V)(\bbR)$. Given a Schwartz function $Ψ\in \mcS(\mfu(V\oplus \mbC)(\mbR))$ and a regular semi-simple element $X\in \mfu(V\oplus \mbC)_\rs(\mbR)$, we have the orbital integral
$$\Orb(X, Ψ) = \int_{U(V)(\mbR)} Ψ(g^{-1}\cdot X) dg$$
which only depends on the orbit $U(V)(\mbR)\cdot X$. The image $π_V(\mfu(V\oplus \mbC)_\rs(\mbR))$ is open and closed in $\mcQ'_\rs(\mbR)$. We use extension by $0$ to view $\Orb(-,Ψ)$ as a smooth function
$$\Orb(-,Ψ):\mcQ'_\rs(\mbR) \lr \mbC.$$

\subsection{Transfer of orbital integrals}
We have just constructed a quotient variety $\mcQ'$ (isomorphic to $\mbA^{2n+1}$) as well as quotient morphisms
\begin{equation}
\xymatrix{
\mfgl_{n+1,\rs} \ar[rd]_{π} & & \mfu(V\oplus \mbC)_\rs \ar[ld]^{π_V}\\
& \mcQ'_\rs. &
}
\end{equation}
Elements $Y\in \mfgl_{n+1,\rs}(\mbR)$ and $X\in \mfu(V\oplus \mbC)_\rs(\mbR)$ are said to \emph{match} if $π(Y) = π_V(X)$. This sets up a bijection of orbits
\begin{equation}\label{eq:matching_bijection}
π(\mfgl_{n+1,\rs}(\mbR)) = \coprod_{p + q = n} π_{V_{p,q}}\big(\mfu(V_{p,q}\oplus \mbC)_\rs(\mbR)\big)
\end{equation}
where $V_{p,q}$ denotes a choice of hermitian $\mbC$-vector space of signature $(p,q)$, see \cite[Proposition 2.2.4.1]{Chaudouard}. We obtain the subspaces
$$\Orb(-,\mcS(\mfgl_{n+1}(\mbR))),\ \Orb(-,\mcS(\mfu(V_{p,q}\oplus \mbC)(\mbR)))\ \subset\ \mcC^\infty(\mcQ'_\rs(\mbR))$$
of those smooth functions that come as orbital integral from a Schwartz function on $\mfgl_{n+1}$ resp. $\mfu(V_{p,q}\oplus \mbC)$. This is the context of Conjecture \ref{conj:intro_transfer_all_Schwartz}.

\subsection{Polynomial Schwartz functions}
\label{ss:polynomial_Schwartz_functions}
We now give the details of the more algebraic Conjecture \ref{conj:intro_transfer_poly_Schwartz}. The quadratic form $Q(Y) = \tr(Y^2)$ on $\mfgl_{n+1}$ is $\GL_n$-invariant, i.e. lies in $\mbC[\mfgl_{n+1}]^{\GL_n}$.

We have the orthogonal decomposition
\[
\lie{gl}_{n+1} = \Sym_{n+1} \oplus \Skew_{n+1}
\]
into maximal positive (resp.\ negative) definite subspaces. The quadratic form $\bar Q(Y) = \tr(Y Y^t)$ is a Siegel majorant for $Q$, and the corresponding Siegel Gaussian, cf.\ \Cref{ss:Fock model}, can be expressed concretely as
\begin{equation}\label{eq:Siegel_GL}
φ(Y) = e^{-2π\tr(YY^t)} \in \mcS(\mfgl_{n+1}(\mbR)).
\end{equation}

\begin{definition}\label{def:pol_Schwartz_gl}
The space of \emph{polynomial Schwartz functions} on $\mfgl_{n+1}$ is the subspace
$$\mathcal S'_n := \mbC[\mfgl_{n+1}]\cdot φ \subset \mcS(\mfgl_{n+1}(\mbR)).$$
\end{definition}
Given an $n$-dimensional hermitian space $(V,h)$, we use \eqref{eq:ident_inv_functions} to view $Q$ as a quadratic form on $\mfu(V\oplus \mbC)$. It is given by $Q(X) = -\tr(X^2)$. Let $V = V_+ \oplus V_-$ be an orthogonal decomposition into a positive definite summand $V_+$ and a negative definite one $V_-$. Then $H_n := h\vert_{V_+} \oplus -h\vert_{V_-}$ is a positive definite hermitian form on $V$. We extend $H_n$ to $V\oplus \mbC$ as $H := H_n\oplus 1$. This gives rise to an orthogonal decomposition
 	\[
\lie u (V \oplus \bbC) =  \mathrm{SkewHerm}(H) \oplus \Herm(H)
\]
where $\Herm(H)$ and $\mr{SkewHerm}(H)$ denote the elements of $\mfu(V\oplus \mbC)$ which are (skew-)hermitian for $H$. (Note that $\Herm(H)$ is negative definite with respect to $-\tr(X^2)$.) Let $X \mapsto X^{\dagger}$ denote the adjoint with respect to $H$. Then the form $\ov Q(X)= \tr(X X^{\dagger})$ is a Siegel majorant for $Q$, and the resulting Siegel Gaussian is
$$ψ_V(X) = e^{-2π\tr(XX^\dagger)}.$$
\begin{definition}	The space of \emph{polynomial Schwartz functions} on $\lie u(V \oplus \bbC)$ is the subspace
$$  \mathcal T'_{V} := \mbC[\lie u(V \oplus \bbC)]\cdot \psi_V \subset \mcS(\lie u(V \oplus \bbC)(\mbR)).$$
\end{definition}
Note that the decomposition $V = V_+ \oplus V_-$ is unique up to the $U(V)(\mbR)$-action, so $\Orb(-,\mcT'_V)$ is independent of choices.

For every signature $(p,q)$, $p+q = n$, we denote by $ψ_{p,q}$ a choice of Siegel Gaussian on $\mfu(V_{p,q}\oplus \mbC)$ as just explained. Our conjecture is that smooth transfer is already defined for polynomial Schwartz functions.
\begin{conjecture}\label{conj:polynomial_Schwartz}
There is an identity of spaces of smooth functions on $\mcQ'_\rs(\mbR)$:
$$\Orb\big(-,\mathcal S'_n\big) = \bigoplus_{p + q = n} \Orb\big(-, \mathcal T'_{p,q}\big),$$
where $\mathcal T'_{p,q} = \mathcal T'_{V_{p,q}}$.
\end{conjecture}

\subsection{Reduction to $\mfsl_n\times \mbA^n\times (\mbA^n)^t$ and $\mfsu(V)\times V$}\label{ss:reduction_trace_0}
We will now eliminate a two-dimensional ``trivial'' factor of our setting. Along the way, we will also set up our notation for the rest of the article.

First, we define a subspace $\lie s_n \subset \lie{gl}_{n+1}$ by
\begin{equation}\label{eq:intro_sl_setting}
\lie s_n :=  \left\{\left. \begin{pmatrix}
y & v \\ w &
\end{pmatrix}\ \right\vert\ y \in \lie{sl}_n, \, v \in \mbA^n , \, w \in  (\mbA^n)^t \right\} \simeq 	\mfsl_n \times \mbA^n \times (\mbA^n)^t .
\end{equation}
The orthogonal complement of $\lie s_n$ is the two-dimensional subspace
\begin{equation}\label{eq:trivial_two_dim_sub}
\left.\left\{\begin{pmatrix}
a \cdot I_n & \\ & d
\end{pmatrix}\ \right\vert\ a,d\in \mbA^1 \right\}\ \subset\ \mfgl_{n+1}.
\end{equation}
Note that $\GL_n$ acts trivially on this subspace, and the two linear invariant polynomials $(\frac{2}{n})^{\frac{1}2}\tr(y), \sqrt 2\,  d\in \mcA'$, see \eqref{eq:invariant_functions_GL}, are the coordinate functions for the standard basis vectors $\frac{1}{\sqrt{2n}}\diag(I_n, 0)$ and $ \frac{1}{\sqrt 2}\diag(0_n,1)$, cf.\ \Cref{ss:infinitesimal_Weil_rep}.

We obtain the decomposition
\begin{equation*}
\begin{aligned}
\mcA & := \mbC[\lie s_n]^{\GL_n}, & \qquad \mcA' &= \mbC[\tr(y), d] \tensor_{\bbC} \mcA\\[1mm]
\mcQ & := \Spec(\mcA),&  \mcQ' &= \mbA^2 \times \mcQ.
\end{aligned}
\end{equation*}
The ring $\mcA$ is a polynomial ring in $2n-1$ variables generated by
\begin{equation}\label{eq:invariant_functions_SL}
\tr(\wedge^2 y),\ \ \ldots,\ \ \tr(\wedge^n y),\ \ wv,\ \ wyv,\ \ \ldots,\ \ wy^{n-1}v.
\end{equation}
The Siegel Gaussian is compatible with the decomposition of $\mfgl_{n+1}$ into \eqref{eq:intro_sl_setting} and \eqref{eq:trivial_two_dim_sub} in the sense that
$$
φ\left(\begin{pmatrix} y & v \\ w & d\end{pmatrix}\right) = e^{-2π(\tr(y)^2/n + d^2)}\cdot φ\left(\begin{pmatrix} y - \tr(y)/n & \ v \\ w & \end{pmatrix}\right).
$$
The first factor is $\GL_n(\mbR)$-invariant and comes by pullback from $\mcQ'(\mbR)$. We continue to write $φ$ for the restriction of the Siegel Gaussian to $\lie s_n$ and define
\begin{equation}\label{eq:def_S_n}
\mcS_n := \mbC[\lie s_n]\cdot φ.
\end{equation}
The orbital integral functional on $\mcS_n$ is now a map
$$\Orb(-,-)\colon \mcS_n \lr \mcC^\infty(\mcQ(\mbR)_\rs)$$
and we have the mutually compatible factorizations
\begin{equation}\label{eq:S_n_different_notation}
\begin{aligned}
\mcS'_n & = \big(\mbC[\tr(y), d]\cdot e^{-2π(\tr(y)^2/n+ d^2)}\big) \tensor \mcS_n\\
\Orb(-,\mcS'_n) & = \big(\mbC[\tr(y), d]\cdot e^{-2π(\tr(y)^2/n + d^2)}\big) \tensor \Orb(-,\mcS_n).
\end{aligned}
\end{equation}

The  same kind of factorizations exist on the unitary side: in the notation of \eqref{eq:standard_notation_U}, the complement of the two-dimensional trivial representation $\left.\left\{\left(\begin{smallmatrix} a \, I_n & \\ & e \end{smallmatrix}\right)\ \right\vert\ a,e\in i\cdot \mbA^1\right\} \subset \mfu(V\oplus \mbC)$ is
\begin{equation}\label{eq:intro_su_setting}
\lie t_{V}: = \left\{\left.\begin{pmatrix}
x & u \\ -u^* &
\end{pmatrix} \in \mfu(V\oplus \mbC) \ \right\vert\ \tr(x) = 0, \ u \in V \right\} \simeq \lie{su}(V) \oplus V
\end{equation}
The invariant polynomials $\mbC[\lie t_V]^{U(V)}$ are generated by
\begin{equation}\label{eq:invariant_functions_SU}
\tr(\wedge^2 x)/i^2,\ \ \ldots,\ \ \tr(\wedge^n x)/i^n,\ \ (u,u),\ \ (u,xu)/i,\ \ \ldots,\ (u,x^{n-1}u)/i^{n-1}.
\end{equation}
Matching these with \eqref{eq:invariant_functions_SL} identifies $\mbC[\lie t_V]^{U(V)}$ with $\mcA$ and provides the matching bijection
$$[\lie s_n(\mbR)]_{\rs} = \coprod_{p + q = n} [\lie t_{p,q}(\mbR)]_{\rs}.$$
of regular semisimple orbits, where we abbreviate $\lie t_{p,q} = \lie t_{V_{p,q}}$. We still use $ψ_{p,q}$ to denote the restriction of $ψ_{p,q}$ to $\lie t_{p,q}(\mbR)$ and define
\begin{equation}\label{eq:def_Tpq}
\mcT_{p,q} := \mbC[\lie t_{p,q}]\cdot ψ_{p,q}.
\end{equation}
The orbital integral on $\mcT_{p,q}$ is a functional
$$\Orb(-,-):\mcT_{p,q}\lr \mcC^\infty(\mcQ(\mbR)_\rs),$$
and $\mcT'_{p,q}$ is related to $\mcT_{p,q}$ in the same way as in \eqref{eq:S_n_different_notation}. The following conjecture is hence immediately equivalent to Conjecture \ref{conj:polynomial_Schwartz}.
\begin{conjecture}\label{conj:polynomial_Schwartz_SL}
There is an identity of spaces of orbital integral functions on $\mcQ_\rs(\mbR)$:
$$\Orb(-,\mcS_n) = \bigoplus_{p + q = n} \Orb(-,\mcT_{p,q}).$$
\end{conjecture}

For general $n$, we currently only know the following result which pertains to signatures $(n,0)$ and $(0,n)$.

\begin{theorem}[\protect{\cite[Theorems 4.15 and 6.2]{MSY}}]\label{thm:pos_def}
Every function $Ψ\in \mcT_{n,0} \oplus \mcT_{0,n}$ has a transfer to $\mcS_n$.
\end{theorem}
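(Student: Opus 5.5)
The plan is to reduce everything to the existence of a single transfer for the Siegel Gaussian $ψ_{n,0}$, using that $U(V_{n,0})(\mbR)$ is compact. We handle signature $(0,n)$ afterwards by a sign switch.

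\emph{Reduction to one function.} Let $V = V_{n,0}$ be positive definite. Then $H = h\oplus 1$ is the hermitian form of $V\oplus\mbC$, so $X^\dagger = -X$ on $\lie t_{n,0}$. Hence $ψ_{n,0}(X) = e^{-2π\tr(XX^\dagger)} = e^{2π\tr(X^2)} = e^{-2πQ(X)}$, which is $U(V)$-invariant and comes from the invariant $Q\in\mcA$. For $Ψ = f\cdotψ_{n,0}\in\mcT_{n,0}$, compactness lets us average $f$ over $U(V)(\mbR)$ without changing the orbital integral. The average $f^\natural$ is an invariant polynomial because the action on $\mbC[\lie t_{n,0}]$ is locally finite. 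Therefore
\[
\Orb(-,Ψ) = f^\natural\cdot \Orb(-,ψ_{n,0}) = \mr{vol}(U(V)(\mbR))\, f^\natural\, e^{-2πQ}\cdot \mathbf 1_{(n,0)},
\]
where $f^\natural\in\mcA$ via \eqref{eq:ident_inv_functions} and $\mathbf 1_{(n,0)}$ is the indicator of the open and closed subset $π_V(\lie t_{n,0,\rs}(\mbR))\subset\mcQ_\rs(\mbR)$. On the $\GL_n$-side, multiplication by any $a\in\mcA$ preserves $\mcS_n$ and satisfies $\Orb(-,aΦ) = a\,\Orb(-,Φ)$; this is the order-$0$ case of Proposition \ref{prop:diff_op_orb_int_generic}. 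So it suffices to produce one $Φ_{n,0}\in\mcS_n$ with
\[
\Orb(-,Φ_{n,0}) = e^{-2πQ}\,\mathbf 1_{(n,0)}
\]
up to the volume constant. Then $f^\naturalΦ_{n,0}$ transfers $f\cdotψ_{n,0}$.

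\emph{Constructing $Φ_{n,0}$.} This is the main obstacle. We look for $Φ_{n,0} = P\cdotφ$ whose orbital integral is supported exactly on the $(n,0)$ locus. On the $\GL_n$-side, the matching locus of $V_{n,0}$ is where the Hankel-type matrix $(wy^{i+j}v)_{i,j}$ is positive definite. The first attempt is to take $P$ built from $\det(v, yv,\ldots,y^{n-1}v)$ and the row analogue $\det(w;wy;\ldots;wy^{n-1})$, times suitable exponential corrections that are expressible through $\mcA$ and $Q$.

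After the substitution $g\mapsto$ (coordinates $gv$ and $wg^{-1}$), the $\eta$-twisted integral over $\GL_n(\mbR)$ becomes an integral over pairs of bases. The transfer factor $\wt{η}$ together with $η(g)$ converts it into an untwisted integral of a Gaussian over positive-definite data. Computing this by Gaussian integration in $v$ and $w$, followed by a Wishart-type integral over $y$, should give $e^{-2πQ}$ times the positivity indicator. We expect the cancellation on the indefinite components to come from the sign character integrating to zero. We would verify $n=1$ by hand first to fix the polynomial and the constants.

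\emph{Signature $(0,n)$.} Replacing the hermitian form $h$ by $-h$ identifies $\lie t_{n,0}$ with $\lie t_{0,n}$ and carries Siegel Gaussian to Siegel Gaussian. It changes the invariants \eqref{eq:invariant_functions_SU} by the signs $(u,x^ju)\mapsto-(u,x^ju)$. On the $\GL_n$-side the same sign change on $\mcQ$ is realized by the linear automorphism $(y,v,w)\mapsto(y,v,-w)$ of $\lie s_n$. This map preserves $φ$ and $\mcS_n$, commutes with the $\GL_n$-action and fixes the transfer factor. So $Φ_{n,0}$ composed with it transfers $ψ_{0,n}$. Since $\mcT_{n,0}\oplus\mcT_{0,n}$ is handled summand by summand, this completes the argument.
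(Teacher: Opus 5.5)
Your averaging reduction over the compact group $U(V_{n,0})(\mathbb R)$ and your sign switch $(y,v,w)\mapsto(y,v,-w)$ are correct, and they are exactly the two easy steps of the paper's proof. The gap is the middle step. The existence of some $\Phi_{n,0}\in\mathcal S_n$ with $\Orb(-,\Phi_{n,0})=e^{-2\pi Q}\cdot\mathbf 1_{(n,0)}$ is the entire substance of the theorem. The paper does not prove it here; it imports it from \cite[Theorem 4.15]{MSY}. You replace it with a heuristic (``should give'', ``we expect''). As written, you have therefore only proved the implication ``$\Phi_{n,0}$ exists $\Rightarrow$ theorem''.

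Beyond being incomplete, the sketch would not work as stated, for three reasons.

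First, suppose $P$ is the product of $\det(v,yv,\ldots,y^{n-1}v)$ and its row analogue. Then $P=\det(wy^{i+j}v)_{i,j}$ is the Hankel determinant, which is a $\GL_n$-invariant polynomial. Hence $\Orb(-,P\varphi)=P\cdot\Orb(-,\varphi)=0$, because $\varphi$ is $O(n)$-invariant while the integrand is $\eta$-twisted and $\eta|_{O(n)}$ is nontrivial. The polynomial must instead be $(O(n),\eta)$-equivariant. Examples are $v+w$ for $n=1$ (this is $\Phi_{1,0}$), or the cubic in \eqref{eq:Phi_KM} for $n=2$, which equals $\det(s,ys)+\det(s^t;s^ty)$ with $s=v+w^t$.

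Second, ``exponential corrections expressible through $\mathcal A$ and $Q$'' are not permitted. $\mathcal S_n=\mathbb C[\mathfrak s_n]\cdot\varphi$ consists only of polynomial multiples of $\varphi=e^{-2\pi\tr(YY^t)}$. The nontrivial content of the theorem is precisely that the Gaussian $e^{-2\pi Q}$ in the indefinite form $Q$ arises exactly as the orbital integral of such a polynomial multiple.

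Third, the vanishing on the components of other signatures and the exact value on the $(n,0)$ component are the hard computations, and you only assert them.

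To close the gap, either cite \cite[Theorem 4.15]{MSY} for $\Phi_{n,0}$, or write down an explicit $(O(n),\eta)$-equivariant polynomial and actually carry out the orbital integral computation on every signature component.
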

\begin{proof}
First \cite[Theorem 4.15]{MSY} constructs a Schwartz function $Φ_{n,0}\in \mbC[\mfgl_{n+1}]\cdot φ$
such that
$$\Orb(Y, Φ_{n,0}) = \begin{cases}
e^{-2π\tr(Y^2)} & \text{if $Y$ matches to signature $(n,0)$}\\
0 & \text{otherwise.}
\end{cases}$$
Normalizing the Haar measure on the compact group $U(V_{n,0})(\mbR)$ by requiring its total volume to be $1$, this expresses that the restriction of $Φ_{n,0}$ to $\mfs_n(\mbR)$ is a transfer of $ψ_{n,0}$.

Next, a comparison of the invariants \eqref{eq:invariant_functions_SL} and \eqref{eq:invariant_functions_SU} reveals that if a regular semisimple tuple $(y,v,w)$ matches with a tuple $(x, u) \in \lie t_V = \lie{su}(V) \oplus V$ for a hermitian space $V$, then $(y,v,-w)$ matches with the same element $(x,u)$ viewed for the hermitian space $-V$. In particular, the map $(y,v,w) \mapsto (y, v , -w)$ interchanges orbits matching to signature $(p,q)$ with those matching to signature $(q,p)$. It also leaves the Siegel majorant $Y\mapsto \tr(YY^t)$ invariant. Defining $Φ_{0,n}$ by $Φ_{0,n}(y, v, w) := Φ_{n,0}(y, v, -w)$ hence gives a transfer of $ψ_{0,n}$. Here, we also normalized the Haar measure on $U(V_{0,n})(\mbR)$ to have total volume $1$.

Finally, given any $f \in \mbC[\mfsu(V_{n,0}) \times V_{n,0}]$, the averaged polynomial $f_0 = \int_{U(n)(\mbR)} g\cdot f\, dg$ satisfies $\Orb(-,fψ_{n,0}) = \Orb(-,f_0ψ_{n,0})$ because $ψ_{n,0}$ is $U(n)$-invariant. As $f_0$ is also $U(n)$-invariant, we can view it as an element of $\mbC[\lie s_n]^{\GL_n}$ via \eqref{eq:ident_inv_functions}. (Note that there is no difference between invariance for the algebraic groups or their $\mbR$-points here.) Then $f_0 Φ_{n,0}$ is a transfer of $f ψ_{n,0}$. The same argument applies to $\mcT_{0,n}$.
\end{proof}

\subsection{Orbital integrals and Weil representation}
\label{sec:Weil rep transfer}

Let $V$ be an $n$-dimensional hermitian $\mbC$-vector space. Recall that the identification $\mbC[\mfs_n]^{\GL_n}\simto \mbC[\mft_V]^{U(V)}$ was defined by matching the generators in \eqref{eq:invariant_functions_SL} and \eqref{eq:invariant_functions_SU}. We will now show that this identification can also be obtained by a linear transformation over $\mbC$, which will have consequences for invariant differential operators and orbital integrals.

\begin{proposition}\label{prop:inv_diff_op_isomorphic}
There exists a $\mbC$-linear isomorphism $\mfs_n(\mbC)\simto \mft_V(\mbC)$ which is equivariant for an isomorphism of algebraic groups $\GL_{n,\mbC}\simto U(V)_\mbC$. The resulting isomorphism
$$ρ:\mbC\{\mfs_n\}^{\GL_n} \simlr \mbC\{\mft_V\}^{U(V)}$$
extends the identification $ρ:\mbC[\mfs_n]^{\GL_n} \simlr \mbC[\mft_V]^{U(V)}$ given by \eqref{eq:invariant_functions_SL} and \eqref{eq:invariant_functions_SU}. In particular, for every invariant differential operator $\mcD\in \mbC\{\mfs_n\}^{\GL_n}$,
$$ρ(\mcD)\vert_{\mbC[\mft_V]^{U(V)}} = ρ\big(\mcD\vert_{\mbC[\mfs_n]^{\GL_n}}\big)$$
and for any pair of transferring data $Φ \in \mcS(\mfs_n(\mbR))$ and $(Ψ_{p,q})_{p+q = n} \in \bigoplus_{p+q = n} \mcS(\mft_{p,q}(\mbR))$,
\begin{equation}\label{eq:transfer_inv_diff_op}
\Orb(-,\mcD(Φ)) = \sum_{p + q = n} \Orb(-,ρ(\mcD)(Ψ_{p,q})).
\end{equation}
\end{proposition}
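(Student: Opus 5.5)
We first construct the complex linear isomorphism explicitly. Choose a basis of $V$ in which the hermitian form is $h(a,b)=\ov a^t J b$ with $J=\diag(1,\ldots,1,-1,\ldots,-1)$ of signature $(p,q)$. Then $U(V)_\mbC\cong \GL_{n,\mbC}$ via $g\mapsto g$, where we use $U(V)(\mbC)=\{(g,g')\in \GL_n\times\GL_n : g' = J(g^t)^{-1}J\}$ and project to the first factor. Likewise $\mfu(V\oplus\mbC)\otimes\mbC\cong \mfgl_{n+1}(\mbC)$. Concretely, $X=\left(\begin{smallmatrix}x&u\\-u^*&e\end{smallmatrix}\right)$ with $x^*J=-Jx$ and $u^*=\ov u^tJ$ complexifies to a pair $(x,u,\bar u)$ in which $\bar u$ becomes an independent row vector. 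We then send
$$(x,u,u')\longmapsto (y,v,w)=(c_1x,\ c_2u,\ c_3u'J),$$
where the constants $c_i\in\mbC^\times$ are to be determined. This map is visibly $\GL_n$-equivariant for the conjugation action on the $\GL$ side and the complexified $U(V)$-action on the unitary side.

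Next we compare invariants. Under the map, $\tr(\wedge^k y)=c_1^k\tr(\wedge^kx)$ and $wy^jv=c_3c_1^jc_2\,(\text{complexified }(u,x^ju))$, up to the sign convention coming from $-u^*$. Matching these with \eqref{eq:invariant_functions_SU} forces $c_1=\pm 1/i$ and $c_2c_3=\pm1$, with all signs forced consistently. In particular, $c_1=-i$ together with a suitable choice of $c_2c_3$ works for every $j$ simultaneously, since the factor $i^{-j}$ is absorbed by $c_1^j$. Hence the induced pullback on polynomial rings restricts to the identification $ρ$ of \eqref{eq:invariant_functions_SL} with \eqref{eq:invariant_functions_SU}.

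A linear isomorphism of complex vector spaces induces an isomorphism of Weyl algebras $\mbC\{\mfs_n\}\simto\mbC\{\mft_V\}$, given by transporting functions and derivations. Since the linear map intertwines the group isomorphism, this isomorphism carries $\GL_n$-invariants to $U(V)$-invariants, using Lemma \ref{lem:invariant_diff_ops} and the Zariski density of the real points, so invariance can be checked over $\mbC$. The transport is compatible with evaluation on polynomials, which gives $ρ(\mcD)|_{\mcA}=ρ(\mcD|_\mcA)$ immediately.

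For \eqref{eq:transfer_inv_diff_op}, apply Proposition \ref{prop:diff_op_orb_int_generic} on both sides. We have
$$\Orb(-,\mcD Φ)=(\mcD|_\mcA)\Orb(-,Φ), \qquad \Orb(-,ρ(\mcD)Ψ_{p,q})=(ρ(\mcD)|_\mcA)\Orb(-,Ψ_{p,q}).$$
The operators $\mcD|_\mcA$ and $ρ(\mcD)|_{\mcA}$ coincide as elements of $D(\mcA)$ under the identification $ρ$. Summing over $p+q=n$ and using $\Orb(-,Φ)=\sum_{p+q=n}\Orb(-,Ψ_{p,q})$ gives the claim, because the same differential operator acts on the function on $\mcQ_\rs(\mbR)$.

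The main obstacle is the bookkeeping of the constants $c_i$ and the signs for the invariants $(u,x^ju)/i^j$ against $wy^jv$. This includes checking that the pairing $-u^*$ produces $\ov u^tJ$ rather than its negative, and that a single scalar choice works for all $j$ and all signatures.
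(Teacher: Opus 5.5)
Your proposal is correct and follows essentially the same route as the paper. Both arguments complexify and split $\mathbb{C}\otimes_{\mathbb{R}}V$ into its two factors to identify $U(V)_{\mathbb{C}}\cong\GL_{n,\mathbb{C}}$. Both then write down an explicit equivariant linear isomorphism that matches the generating invariants, transport the Weyl algebra along it, and conclude with Proposition \ref{prop:diff_op_orb_int_generic}. The only difference is the choice of coordinates: the paper uses a hyperbolic basis for the trace form and $\rho(y,v,w)=((iy^t,-iy),(w^t,v))$, where you use a diagonal $J$-basis.

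The bookkeeping you flag does close. The complexified invariants are $\tr(\wedge^k x_1)$ and $u'Jx_1^ju_1$, so $c_1=-i$ and $c_2c_3=1$ work for every $j$ and every signature. The sign of the block entry $-u^*$ plays no role, since only $(u,x^ju)$ enters the invariants.
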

\begin{proof}
Let $(\ ,\ )$ be the hermitian form on $V$ and let $\langle\ ,\ \rangle = \mr{tr}_{\mbC/\mbR}\circ (\ ,\ )$ be its trace. Then
$$\langle λv, w\rangle = \langle v, σ(λ)w\rangle$$
where $λ\in \mbC$ and where $σ$ denotes complex conjugation. We decompose $\mbC\tensor_\mbR\mbC\simto \mbC\times \mbC$ via $a\tensor b \mapsto (ab, a\ov b)$. The $\mbC$-linear extension $σ = 1\tensor σ$ of $σ$ is then given by $σ(a, b) = (b, a)$. We obtain a corresponding decomposition $\mbC\tensor_\mbR V \simto V_0\oplus V_1$ and both $V_0$ and $V_1$ are Lagrangian. Choose $\mbC$-bases for $V_0$ and $V_1$ such that the symmetric bilinear form $\langle\ ,\ \rangle$ is given by
$$\langle\ \,,\ \rangle \longleftrightarrow \begin{pmatrix}
&1_n \\ 1_n &
\end{pmatrix}.$$
Recall that $\mfsu(V)$ is the space of those trace $0$ complex linear endomorphisms $x$ of $V$ such that $\langle x v, w\rangle = -\langle v, x w\rangle$ for all $v,w\in V$. For the above choice of basis of $\mbC\tensor_\mbR V$, this gives
$$\mfsu(V)(\mbC) = \big\{(x^t, -x)\ \mid\ x \in \mfsl_n(\mbC)\big\}.$$
Similarly, $U(V)_\mbC$ is identified with $\{(g^{-1,t}, g)\ \mid\ g\in \GL_{n,\mbC}\}$. In these coordinates, we define $ρ:\mfs_n(\mbC)\to \mft_V(\mbC)$ by
\begin{equation}\label{eq:def_ρ_linear_identification}
ρ(y, v, w)\,:=\,\big((iy^t, -iy),\ (w^t, v)\big)
\end{equation}
and this map is equivariant for the identification $\GL_{n,\mbC}\simto U(V)_\mbC$, $g\mapsto (g^{-1,t}, g)$. The normalizing factors $i^k$ in \eqref{eq:invariant_functions_SU} are given by $1\tensor i^k \mapsto (i^k, (-i)^k)$ in $\mbC\tensor_\mbR\mbC$. So it is immediate that
$$\mr{char}(y;T) = \mr{char}((i,-i)^{-1}\cdot (iy^t, -iy); T)$$
under the diagonal embedding $\mbC\to \mbC\times \mbC$. In the same way, we have
$$\begin{aligned}
wy^kv & = \big((w^t,v), (i, -i)^{-k} ((iy^t)^kw^t, (-iy)^kv)\big)\\
& = \big\langle (w^t,v),\ ((y^t)^kw^t, y^kv)\big\rangle/2\\
& = (wy^kv + v^t(y^t)^kw^t)/2.
\end{aligned}$$
This shows that the isomorphism
$$ρ:\mbC[\mfs_n]^{\GL_n}\simlr \mbC[\mft_V]^{U(V)}$$
induced from \eqref{eq:def_ρ_linear_identification} agrees with the isomorphism given by identifying the invariant polynomials in \eqref{eq:invariant_functions_SL} and \eqref{eq:invariant_functions_SU}. It follows formally from definitions that for all invariant differential operators $\mcD\in \mbC\{\mfs_n\}^{\GL_n}$,
$$ρ(\mcD)\vert_{\mbC[\mft_V]^{U(V)}} = ρ\big(\mcD\vert_{\mbC[\mfs_n]^{\GL_n}}\big).$$
Finally, if $Φ\in \mcS(\mfs_n(\mbR))$ and $(Ψ_{p,q})\in \bigoplus \mcS(\mft_{p,q}(\mbR))$ are Schwartz functions which are mutual transfers, then
$$\begin{aligned}
\Orb(-,\mcD(Φ)) & \ \overset{\text{Prop. \ref{prop:diff_op_orb_int_generic}}}{=}\ (\mcD\vert_{\mbC[\mfs_n]^{\GL_n}})\Orb(-,Φ)\\[1mm]
& \ \mkern 5mu \overset{\text{transfer}}{=}\ \sum_{p + q = n} ρ(\mcD\vert_{\mbC[\mfs_n]^{\GL_n}})\Orb(-,Ψ_{p,q})\\[1mm]
& \ \overset{\text{Prop. \ref{prop:diff_op_orb_int_generic}}}{=}\ \sum_{p+q=n} \Orb(-,ρ(\mcD)(Ψ_{p,q})).
\end{aligned}$$
All claims of the proposition are now proved.
\end{proof}

Examples of invariant differential operators are provided by the Weil representation. Consider again the quadratic form $Q(Y) = \tr(Y^2)$ from \S\ref{ss:polynomial_Schwartz_functions} which we restrict to $\lie s_n = \mfsl_n\times \mbA^n\times (\mbA^n)^t$. It is the sum of the two forms
$$Q_0(Y) = \tr(y^2),\quad Q_1(Y) = 2wv,\quad Y = (y,v,w)\in \lie s_n.$$
on $\lie{sl}_n$ and $\bbA^n \oplus (\bbA^n)^t$ respectively. Note that $Q_0,Q_1\in \mcA$ precisely span the space of homogeneous quadratic elements. For an $n$-dimensional hermitian space $V$, the corresponding forms on $\lie t_V = \lie{su}(V) \oplus V$ are
$$Q_0(X) = -\tr(x^2),\quad Q_1(X) = 2(u,u),\quad X = (x,u)\in \lie t_V.$$
Then $Q_0$ and $Q_1$ define Weil representations as described in \S\ref{ss:Weil_rep} and \eqref{eq:Weil_Lie_algebra}:
\begin{equation}\label{eq:Omega_orb_ints}
\begin{aligned}
\Omega:\Mp_2\times \Mp_2 & \ \circlearrowright\ \mcS(\mfsl_n(\mbR)\times \mbR^n\times \mbR_n),\ \ \mcS(\mfsu(V)(\mbR)\times V)\\[1mm]
ω:\mfsl_2\times \mfsl_2 & \lr \mbC\{\mfs_n\}^{\GL_n},\ \ \mbC\{\mft_V\}^{U(V)}.
\end{aligned}
\end{equation}
\begin{corollary}\label{cor:Weil_rep_transfer}
Let $Φ \in \mcS(\mfs_n(\mbR))$ and $(Ψ_{p,q})_{p + q = n} \in \bigoplus_{p + q = n} \mcS(\mft_{p,q}(\mbR))$ be mutual transfers. Then for every $(h_1,h_2)\in (\mfsl_2\times \mfsl_2)(\mbC)$ also
\begin{equation}\label{eq:transfer_Weil_compatible}
ω(h_1, h_2)Φ\quad\text{and}\quad \big(ω(h_1,h_2)Ψ_{p,q}\big)_{p + q = n}
\end{equation}
are mutual transfers.
\end{corollary}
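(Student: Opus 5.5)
The plan is to reduce the corollary to \eqref{eq:transfer_inv_diff_op} in Proposition \ref{prop:inv_diff_op_isomorphic}. Two things must be checked. First, each operator $ω(h_1,h_2)$ acting on $\mcS(\mfs_n(\mbR))$ is an element of $\mbC\{\mfs_n\}^{\GL_n}$. Second, the isomorphism $ρ$ carries $ω(h_1,h_2)$ on the $\GL_n$-side to the operator $ω(h_1,h_2)$ on each $\mft_{p,q}$, with the Weil representations normalized as in \eqref{eq:Omega_orb_ints}. Once both hold, \eqref{eq:transfer_inv_diff_op} applied to $\mcD = ω(h_1,h_2)$ says exactly that $ω(h_1,h_2)Φ$ and $(ω(h_1,h_2)Ψ_{p,q})_{p+q=n}$ are mutual transfers.

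\emph{Invariance.} By \eqref{eq:Weil_Lie_algebra_algebraic}, $ω$ for the pair $(Q_0,Q_1)$ on the decomposition $\mfs_n = \mfsl_n\oplus(\mbA^n\oplus(\mbA^n)^t)$ is generated by three kinds of operators on each summand. These are the degree operator plus a constant, multiplication by $λQ_j$, and the constant-coefficient operator $-Q_j^*/λ$ with $λ = 2πi$. All of them are built canonically from the quadratic forms $Q_0$ and $Q_1$. Both forms are $\GL_n$-invariant, and so is the direct sum decomposition. Hence each operator is $\GL_n$-invariant, and the same reasoning applies to $U(V)$ on $\mft_V$. Here Lemma \ref{lem:invariant_diff_ops} is not even needed, because the invariance holds algebraically.

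\emph{Compatibility with $ρ$.} The map $ρ$ on differential operators is induced by the $\mbC$-linear isomorphism $ρ:\mfs_n(\mbC)\simto\mft_V(\mbC)$ from \eqref{eq:def_ρ_linear_identification}. It respects the decompositions, sending $\mfsl_n$ to $\mfsu(V)$ and $\mbA^n\oplus(\mbA^n)^t$ to $V$. By Proposition \ref{prop:inv_diff_op_isomorphic}, the induced map on invariant polynomials sends $Q_0 = \tr(y^2)$ and $Q_1 = 2wv$ to the corresponding forms $-\tr(x^2)$ and $2(u,u)$ on $\mft_V$. These are the elements matched under \eqref{eq:invariant_functions_SL} and \eqref{eq:invariant_functions_SU}, since $Q_0,Q_1$ are expressed through the generators $\tr(\wedge^2 y)$ and $wv$. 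Thus $ρ$ is an isometry of quadratic spaces $(\mfs_n(\mbC),Q_0\oplus Q_1)\simto(\mft_V(\mbC),Q_0\oplus Q_1)$ over $\mbC$. The remark after \eqref{eq:Weil_Lie_algebra_algebraic} says that the algebraic formulas are preserved under isometries and base change to $\mbC$. Consequently $ρ$ maps the degree operator, $λQ_j$ and $Q_j^*/λ$ to their counterparts, so $ρ(ω(h_1,h_2)) = ω(h_1,h_2)$.

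\emph{Compatibility with the real formulas.} The last point to confirm is that the real formulas \eqref{eq:Weil_Lie_algebra}, written for each real form $\mfs_n(\mbR)$ and $\mft_{p,q}(\mbR)$ in its own standard coordinates, agree with the $\mbC$-algebraic formulas with $λ=2πi$. This holds because both are the $\mbC$-linear extension of the coordinate-free description: the operator $Q^*$ is intrinsic to the form and does not depend on the chosen basis. I expect this step to be the main, though mild, obstacle. One must check that $ρ$ really matches the quadratic forms with the correct signs and scalars (including the factor $2$ in $Q_1$) rather than up to a scalar. Any discrepancy in the scalar would rescale $λ$ and break the identification. With that settled, the corollary follows from \eqref{eq:transfer_inv_diff_op}, and it extends to all of $(\mfsl_2\times\mfsl_2)(\mbC)$ by linearity.
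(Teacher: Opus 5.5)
Your proposal is correct and takes the same route as the paper. Both arguments observe that $ρ$ identifies the pairs $(Q_0,Q_1)$ on $\mfs_n(\mbC)$ and $\mft_{p,q}(\mbC)$. By the coordinate-free formulas of \S\ref{ss:infinitesimal_Weil_rep_algebraic}, $ρ$ therefore identifies the two infinitesimal Weil representations, and the claim follows from \eqref{eq:transfer_inv_diff_op}. The scalar check you flag is indeed settled by the explicit matching $\tr(y^2) = -2\tr(\wedge^2 y) \leftrightarrow 2\tr(\wedge^2 x) = -\tr(x^2)$ and $2wv \leftrightarrow 2(u,u)$.
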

\begin{proof}
This is a special case of Proposition \ref{prop:inv_diff_op_isomorphic}. Namely, for every signature $(p,q)$ in question, the isomorphism $ρ:\mfs_n(\mbC)\simto \mft_{p,q}(\mbC)$ identifies the pairs of forms $(Q_0,Q_1)$ on the two spaces. Hence, by the coordinate invariance of the Weil representation as in \S\ref{ss:infinitesimal_Weil_rep_algebraic}, it also identifies the two infinitesimal Weil representations. The claim now follows from \eqref{eq:transfer_inv_diff_op}.
\end{proof}

\begin{remark}\label{rmk:use_Xue_instead} Corollary \ref{cor:Weil_rep_transfer} can also be obtained from \cite[Theorem 9.1]{Xue} by differentiating the group-theoretic Weil representation.
\end{remark}

We define $\mfA \subseteq \mbC\{\mfs_n\}^{\GL_n}$ as the subalgebra generated by the infinitesimal Weil representation and the invariant polynomials:
\begin{equation}\label{eq:def_A_fat}
\mfA := \mbC\,\big\langle ω_{\mfs_n,Q_0,Q_1}(\mfsl_2 \times \mfsl_2),\, \mcA\,\big\rangle.
\end{equation}
Using Proposition \ref{prop:inv_diff_op_isomorphic}, for every $V$, we can also naturally view $\mfA$ as the subring
\begin{equation}\label{eq:A_fat_on_unitary_side}
\mfA = \mbC\,\big\langle ω_{\mft_V,Q_0,Q_1}(\mfsl_2 \times \mfsl_2),\, \mcA\,\big\rangle \subseteq \mbC\{\mft_V\}^{U(V)}.
\end{equation}
Moreover, this identification is such that the following square commutes:
\begin{equation}\label{eq:comm_square_A_fat}
\xymatrix{
& \mbC\{\mfs_n\}^{\GL_n} \ar[rd]^-{\text{Lem. \ref{lem:restrict_diff_op}}} & \\
\mfA\ \ar[ru] \ar[rd] \ar@{-->}[rr]^{\mcD\longmapsto \mcD\vert_\mcQ} & & \mbC\{\mcQ\}.\\
& \mbC\{\mft_V\}^{U(V)} \ar[ru]_-{\text{Lem. \ref{lem:restrict_diff_op}}} &
}
\end{equation}
Differentiation preserves polynomial Schwartz functions and Proposition \ref{prop:diff_op_orb_int_generic} ensures that the $\mfA$-action descends to $\Orb(-, \mcS_n)$ and all $\Orb(-,\mcT_{p,q})$ as
$$\begin{aligned}
\Orb(-,\mcD(Φ)) & = (\mcD\vert_\mcQ)(\Orb(-,Φ))\\
\Orb(-,\mcD(Ψ)) & = (\mcD\vert_\mcQ)(\Orb(-,Ψ)).
\end{aligned}$$
In particular, if $\Orb(-,Φ) = \sum_{p+q = n} \Orb(-,Ψ_{p,q})$, then for every $\mcD\in \mfA$ also
\begin{equation}\label{eq:compatib_transfer_mfA}
\Orb(-,\mcD(Φ)) = \sum_{p+q = n} \Orb(-,\mcD(Ψ_{p,q})).
\end{equation}
Extrapolating from the cases $n = 1$ and $n = 2$, we have the following expectation.

\begin{conjecture}\label{conj:cyclic_Orb} (1) Each of the $\mfA$-modules $\Orb(-,\mcT_{p,q})$ is cyclic with generator $\Orb(-,ψ_{p,q})$.

\mn (2) Each $\psi_{p,q}$ admits a transfer, and this collection of transfers generates $\Orb(-,\mcS_n)$ as an $\mfA$-module.
\end{conjecture}
We conclude this subsection by noting that every differential operator on $\mcQ_\rs$ comes by restriction from an invariant differential operator on $(\lie s_n)_\rs$ or, alternatively, $(\lie t_V)_\rs$. In particular, applying a differential operator to some $\Orb(-,Φ)$ or $\Orb(-,Ψ)$ can always be realized by applying an invariant differential operator with meromorphic coefficients to $Φ$ resp. $Ψ$.

Define the \emph{discriminant} $Δ\in \mcA$ as in \cite[\S4]{Xue}. We do not need the precise formula but only use the property $\mcQ_\rs = \mcQ[Δ^{-1}]$. The case $n = 2$ is made explicit in \S\ref{ss:geometry_Q}.

\begin{lemma}\label{lem:restriction_surjective}
For every $n\geq 1$ and every hermitian $\mbC$-vector space $V$ of dimension $n$, the restriction maps
$$\begin{aligned}
\mbC\{\lie s_n\}[Δ^{-1}]^{\GL_n} & \lr \mbC\{\mcQ\}[Δ^{-1}]\\
\mbC\{ \lie t_V\}[Δ^{-1}]^{U(V)} & \lr \mbC\{\mcQ\}[Δ^{-1}]
\end{aligned}$$
are surjective.
\end{lemma}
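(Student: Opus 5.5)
Since $\mbC\{\mcQ\}[Δ^{-1}]$ is generated as a ring by $\mcA[Δ^{-1}]$ together with the vector fields (derivations) of $\mcQ_\rs$, and the restriction maps are ring homomorphisms extending the identity on $\mcA[Δ^{-1}]$, the plan is to lift derivations. The functions already lift tautologically: $\mcA[Δ^{-1}] = \mbC[\lie s_n]^{\GL_n}[Δ^{-1}]$ consists of invariant differential operators of order zero. Because $\mcQ\simeq \mbA^{2n-1}$ with coordinates $a_1,\ldots,a_{2n-1}$ given by \eqref{eq:invariant_functions_SL}, it suffices to produce, for each $j$, an invariant vector field $\xi_j\in \mbC\{\lie s_n\}[Δ^{-1}]^{\GL_n}$ with $\xi_j(a_k) = δ_{jk}$. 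By Lemma \ref{lem:restrict_diff_op} applied after localization, the restriction of $\xi_j$ is then $\partial/\partial a_j$. The unitary case follows by transporting along the $\mbC$-linear isomorphism $ρ$ of Proposition \ref{prop:inv_diff_op_isomorphic}, which is equivariant and compatible with $Δ$. Alternatively one can run the same argument over $\mbC$, since invariance may be checked after base change.

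To construct $\xi_j$, I would use the geometry of the quotient over $\mcQ_\rs$. The map $π:\lie s_{n,\rs}\to \mcQ_\rs$ is smooth, and it is a $\GL_n$-torsor: the stabilizers are trivial and the orbits are closed. Concretely, I would write down a section. For regular semisimple $(y,v,w)$, the vectors $v, yv,\ldots,y^{n-1}v$ form a basis, and in this basis $y$ becomes a companion matrix, $v$ becomes $e_1$, and $w$ becomes the row $(wv, wyv,\ldots,wy^{n-1}v)$. This yields a morphism $s:\mcQ\to\lie s_n$ with $π\circ s=\mr{id}$. On the open set where the Krylov determinant is invertible, it gives an explicit equivariant isomorphism $\GL_n\times \mcQ\to \lie s_n$, $(g,q)\mapsto g\cdot s(q)$.

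On $\GL_n\times \mcQ$, the vector field $\partial/\partial a_j$ acting along the second factor is $\GL_n$-invariant. Pulled back, it is a regular invariant vector field on the locus where $\det(v,yv,\ldots,y^{n-1}v)\neq 0$. That locus is not all of $\lie s_{n,\rs}$, so I would instead construct the field intrinsically. An invariant vector field on $\lie s_n$ that projects to $\partial_{a_j}$ is the same as a $\GL_n$-equivariant section of $π^*T\mcQ$ lifted through $dπ$. Since $dπ$ is surjective on the smooth locus with kernel the orbit directions $\mfgl_n\cdot Y$, which are free, a lift exists locally, and any two lifts differ by the infinitesimal action. The idea is to average, or rather to normalize the lift uniquely by requiring it to be orthogonal to the orbit with respect to the invariant form $\tr(Y^2)$, i.e.\ $Q$. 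This is possible because the restriction of $Q$ to the orbit tangent space is nondegenerate at regular semisimple points, and its determinant should be a power of $Δ$ times a unit. The orthogonal lift is then canonical, hence $\GL_n$-invariant, with coefficients in $\mbC[\lie s_n][Δ^{-1}]$.

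The main obstacle is verifying that the nondegeneracy locus of the Gram matrix of $Q$ on $\mfgl_n\cdot Y$ is exactly $\{Δ\neq0\}$, or at least contains it, so that only $Δ$ needs to be inverted. Failing this, I would fall back on a different argument. Invert the relevant Gram determinant $D$, which is an invariant function, and note that $\mbC\{\mcQ\}[Δ^{-1}]$ is obtained by gluing over $\mcQ_\rs$. The regularity of the lift on $\{Δ\neq 0\}$ then follows from uniqueness together with Hartogs-type extension on the normal variety $\lie s_{n,\rs}$, provided the bad locus has codimension at least two there. Alternatively, I would compute $D$ directly for $n=2$ and treat it as a unit multiple of a power of $Δ$.
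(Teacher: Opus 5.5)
There is a genuine gap, and it sits exactly where you discard your own section.

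\textbf{The abandoned route is correct, and it is the paper's proof.} You assert that $\{\det(v,yv,\ldots,y^{n-1}v)\neq 0\}$ is not all of $\lie s_{n,\rs}$. The inclusion actually goes the other way. Regular semisimplicity includes $\mbC[y]v=\mbC^n$, so the Krylov determinant has no zeros on $\lie s_n[\Delta^{-1}]=\pi^{-1}(\mcQ_\rs)$. Hence it is a unit in $\mbC[\lie s_n][\Delta^{-1}]$. Concretely, the Hankel matrix $(wy^{i+j}v)_{i,j}$ is the product of the matrix with rows $wy^i$ and the matrix with columns $y^jv$; for $n=2$ its determinant is $-\Delta$. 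Therefore your map $(g,q)\mapsto g\cdot s(q)$ restricts to an equivariant isomorphism $\GL_n\times\mcQ[\Delta^{-1}]\simto\lie s_n[\Delta^{-1}]$. Pulling back any element of $\mbC\{\mcQ\}[\Delta^{-1}]$ along the second projection then gives an invariant lift, and no reduction to vector fields is needed. The paper normalizes $w=e_1^t$ instead of $v=e_1$, and handles the unitary case by passing to $\mbC$, essentially as you suggest.

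\textbf{The orthogonal-lift route you switch to does not work.} This is a real failure, not merely a missing verification.

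\emph{The Gram determinant has an extra factor.} For $n=2$, consider the Gram matrix of the polar form of $Q$ on the vectors $E_{ij}\cdot Y$. Its determinant is $\GL_2$-invariant. At $y=\diag(\lambda_1,\lambda_2)$ it is proportional to
$w_1v_1w_2v_2(\lambda_1-\lambda_2)^2\big((\lambda_1-\lambda_2)^2+wv\big)$.
So everywhere it is a nonzero constant times $\Delta\cdot(\beta-4\alpha)$. The factor $\beta-4\alpha$ cuts out a divisor that meets $\mcQ_\rs$.

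\emph{No invariant form avoids this.} Take $Y=(E_{12},e_2,e_1^t)$. It has invariants $(\alpha,\beta,\gamma)=(0,0,1)$ and $\Delta=1$, so it is regular semisimple. Yet the nonzero vector $(E_{11}-E_{22})\cdot Y$ is orthogonal to all of $\mfgl_2\cdot Y$ for every invariant form $c_0Q_0+c_1Q_1$.

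\emph{Consequence.} At such points $(\mfgl_2\cdot Y)^\perp$ meets $\ker d\pi_Y=\mfgl_2\cdot Y$. So $d\pi_Y$ does not map $(\mfgl_2\cdot Y)^\perp$ onto $T_{\pi(Y)}\mcQ$. Some orthogonal lift of a $\partial_{a_j}$ must then have a genuine pole there. Indeed, if all three lifts were regular at $Y$, their values would lie in $(\mfgl_2\cdot Y)^\perp$ and surject onto $T_{\pi(Y)}\mcQ$.

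So $D$ is not a unit times a power of $\Delta$. The Hartogs fallback is unavailable, since the bad locus has codimension one. Changing the invariant form does not help either.
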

The statement is similarly true for the morphisms $\mfgl_{n+1}\to \mcQ'$ and $\mfu(V\oplus \mbC)\to \mcQ'$, but we have only stated the version we will use explicitly later.
\begin{proof}
The claimed surjectivity can be checked after base extension of the involved algebraic groups and varieties to $\mbC$. The unitary setting is a form of the $\GL_n$-setting, so it suffices to consider the $\GL_n$-case. There exists a section
$$σ':\mcQ'\lr \mfgl_{n+1}$$
of the quotient map, sending a characteristic polynomial $T^n + a_{n-1}T^{n-1} + \ldots + a_0$, a tuple of values $b_j = wy^jv$ with $j = 0,\ldots,n-1$, and a scalar $d$ to the matrix
$$σ'(a_i, b_j, d) := \begin{pmatrix}
0& 1& & & & b_0\\
& 0 & 1& & & b_1 \\
& & \ddots& \ddots & & \vdots \\
& & & 0& 1 & b_{n-2}\\
-a_0 & -a_1 & \hdots & -a_{n-2} & -a_{n-1} & b_{n-1}\\
1 & 0 & & \hdots & 0 & d
\end{pmatrix}.$$
This map also restricts to a section $σ:\mcQ \to \mfsl_n\times \mbA^n\times (\mbA^n)^t$. Since stabilizers of regular semi-simple elements are trivial, this shows that the quotient map is a trivial $\GL_n$-torsor over $\mcQ_\rs$:
$$\begin{aligned}
\GL_n \times \mcQ[Δ^{-1}] & \simlr (\mfsl_n\times \mbA^n\times (\mbA^n)^t)[Δ^{-1}]\\[1mm]
(g,(a_i, b_j, 0)) & \longmapsto g\cdot σ(a_i, b_j, 0).
\end{aligned}$$
The projection map $\mbC\{\GL_n\times \mcQ[Δ^{-1}]\}^{\GL_n} \to \mbC\{\mcQ[Δ^{-1}]\}$ is trivially surjective, proving the lemma.
\end{proof}

\subsection{Lie algebra coinvariants}

Differentiating the action of $\GL_n(\mbR)$ on $\mcS(\mfs_n(\mbR))$, we obtain a $(\mfgl_n(\mbR), O(n))$-module. The subspace $\mcS_n$ is $(\mfgl_n(\mbR), O(n))$-stable and $O(n)/\SO(n)$ acts on the Lie algebra coinvariants
$$ \mcS_n \big/\langle L(Φ) \ | \ L\in \mfgl_n(\mbR),\, Φ\in \mcS_n\rangle.$$
We define
\begin{equation}\label{eq:def_bar_S_n}
\ov {\mcS_n} := \left( \mcS_n \big/\langle L(Φ) \ | \ L\in \mfgl_n(\mbR),\, Φ\in \mcS_n\rangle \right)_{(O(n),η)}
\end{equation}
as the $\eta$-isotypic quotient.
Analogously, each $\mcT_{p,q}$ is a $(\mfu(V_{p,q}), U(p)\times U(q))$-module and we set
\begin{equation}\label{eq:def_bar_T_pq}
\ov {\mcT_{p,q}} := \mcT_{p,q} \Big/\langle L(Ψ) \ | \ L\in \mfu(V_{p,q}),\, Ψ\in \mcT_{p,q}\rangle.
\end{equation}
It is clear that we have factorizations of the orbital integral functionals as
\begin{equation}
\xymatrix@R=2mm{
\mcS_n \ar@{->>}[r] & \ov{\mcS_n} \ar@{->>}[r] & \Orb(-,\mcS_n)\\
\mcT_{p,q} \ar@{->>}[r] & \ov{\mcT_{p,q}} \ar@{->>}[r] & \Orb(-,\mcT_{p,q})
}
\end{equation}
and that the $\mfA$-action descends from $\mcS_n$ to $\ov{\mcS_n}$ as well as from $\mcT_{p,q}$ to $\ov{\mcT_{p,q}}$. We make the following general conjecture.
\begin{conjecture}\label{conj:cyclic_coinvariants}  (1) Each of the $\mfA$-modules $\ov{\mcT_{p,q}}$ is cyclic and generated by the image of $ψ_{p,q}$.

\mn (2) The $\mfA$-module $\overline{\mathcal S_n}$ admits a system of $\lie A$-module generators $\{ \Phi_{p,q} \ | \ p+q = n\}$ such that the biweight of $\Phi_{p,q}$ with respect to the Weil representation coincides with that of $\psi_{p,q}$.
\end{conjecture}

The expected $n+1$ generators of $\ov{\mcS_n}$ should have a representation-theoretic description, but our proof for $n = 2$ does not per se suggest one. In line with an expected density statement for orbital integrals, one can also ask the following question.
\begin{question}\label{qu:surjection_is_iso}
Is it true that the quotient maps $\ov{\mcS_n} \twoheadrightarrow \Orb(-,\mcS_n)$ and $\ov{\mcT_{p,q}}\twoheadrightarrow \Orb(-,\mcT_{p,q})$ are in fact isomorphisms?
\end{question}
This is easy to confirm for $n = 1$, but already the case $n = 2$ does not seem to be straightforward. If the answer to Question \ref{qu:surjection_is_iso} were affirmative and if Conjecture \ref{conj:polynomial_Schwartz_SL} held, then we would obtain an isomorphism of $\mfA$-modules
\begin{equation}\label{eq:decomp_algebraic}
\ov{\mcS_n} \simlr \bigoplus_{p+q=n} \ov{\mcT_{p,q}}.
\end{equation}
This is a purely algebraic statement that is formulated independently of orbital integrals.

\begin{example}\label{ex:cylic_T_definite}
The space of coinvariants $\ov{\mcT_{n,0}}$ is a cyclic $\mfA$-module. Namely, taking $\mfu(n)$-coinvariants can be realized as the averaging map
$$\mr{vol}(U(n))^{-1} \int_{U(n)}\ dg : \mbC[\lie t_{n,0}] {\relbar\joinrel\twoheadrightarrow} \mbC[\lie t_{n,0}]^{U(n)}$$
and this quotient is already cyclic (free of rank $1$ even) for the subring $\mcA\subset \mfA$. The same argument shows that $\ov{\mcT_{0,n}}$ is cyclic as $\mfA$-module.
\end{example}

\begin{remark}
With respect to the decompositions described in \S\ref{ss:reduction_trace_0}, we have the following description of the ring $\mfA'$ from the introduction:
\begin{equation}
\begin{aligned}
\mfA' & = \mbC\{\tr(y), d\} \tensor_\mbC \mfA\\
& \iso \mbC\{\tr(x), e\} \tensor_\mbC \mfA.
\end{aligned}
\end{equation}
The factor $\mbC[\tr(y), d]\cdot e^{-2π(\tr(y)^2/n + d^2)}$ in \eqref{eq:S_n_different_notation} is a cyclic $\mbC\{\tr(y),d\}$-module and a similar observation holds on the unitary side. In this way, Conjecture \ref{conj:cyclic_coinvariants} immediately implies Conjecture \ref{conj:intro_coinvariants}.
\end{remark}

\subsection{Transposition}

We end by noting a sign symmetry of the setting. Consider the transpose map $Y\mapsto Y^t$ on $\mfgl_{n+1}$. It restricts to $(y, v, w) \mapsto (y^t, w^t, v^t)$ on $\lie s_n =\mfsl_n\times \mbA^n\times (\mbA^n)^t$. The $2n-1$ invariant polynomials from \eqref{eq:invariant_functions_SL} are invariant under this map, which means that transposition preserves individual regular semi-simple orbits. For $Y\in \mfgl_{n+1}(\mbR)_\rs$, we can hence define a sign $τ(Y)$ by
\begin{equation}
τ(Y) := η(h)
\end{equation}
where $h \in \GL_n(\bbR)$ is the (unique) element such that $Y^t = h^{-1} Y h$.
The sign $\tau(Y)$ only depends on the orbit $\GL_n(\mbR)\cdot Y$. For a Schwartz function $Φ$, we define $Φ^t(Y) := Φ(Y^t)$. The Haar measure on $\GL_n(\mbR)$ is transposition invariant, so it is easy to verify that
$$\Orb(Y, Φ) = τ(Y)\Orb(Y, Φ^t).$$
\begin{lemma}\label{lem:transposition}
Assume that $Y\in \mfgl_{n+1}(\mbR)_{\rs}$ matches to signature $(p,q)$. Then $τ(Y) = (-1)^q$.
\end{lemma}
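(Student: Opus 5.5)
The plan is to express $τ(Y)$ through the transfer factor $\wt{η}$ from \eqref{eq:transfer_factor}, and then to read off the sign from a Gram determinant on the unitary side. The two terms $\tr(y)$ and $d$ play no role in transposition or in matching, so it suffices to work with $Y = (y,v,w)$ regular semi-simple.

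\emph{Step 1: a Hankel determinant on the $\GL_n$-side.} If $Y^t = h^{-1}Yh$, the equivariance property $\wt{η}(h^{-1}\cdot Y) = η(h)\wt{η}(Y)$ gives $τ(Y) = η(h) = \wt{η}(Y^t)\,\wt{η}(Y)$. Transposition sends $(y,v,w)$ to $(y^t,w^t,v^t)$. Hence $\wt{η}(Y^t)$ is the sign of $\det(w^t, y^tw^t,\ldots,(y^t)^{n-1}w^t)$, which is the determinant of the matrix $M_w$ whose rows are $w, wy, \ldots, wy^{n-1}$. Write $M_v = (v, yv, \ldots, y^{n-1}v)$. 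Then
$$τ(Y) = \mathrm{sgn}\det(M_wM_v) = \mathrm{sgn}\det\big(b_{i+j}\big)_{0\le i,j\le n-1},\qquad b_k := wy^kv .$$
This is a Hankel matrix of the moments $b_k$. For $k\ge n$, Cayley--Hamilton expresses $b_k$ as a universal polynomial in the invariants \eqref{eq:invariant_functions_SL}. So the Hankel determinant is an element of $\mcA$, and it is nonzero on $\mcQ_\rs(\mbR)$ because $M_v$ and $M_w$ are invertible there.

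\emph{Step 2: the same determinant on the unitary side.} Let $X=(x,u)\in \mft_{p,q}(\mbR)_\rs$ match $Y$. Under \eqref{eq:ident_inv_functions}, $b_k$ corresponds to $c_k := (u,x^ku)/i^k$ for $k\le n-1$. The characteristic polynomial coefficients of $y$ correspond to those of $x/i$, and the Cayley--Hamilton recursion is the same on both sides. Therefore the identity $b_k \leftrightarrow c_k$ persists for all $k\le 2n-2$, and the Hankel determinant evaluated at $Y$ equals $\det(c_{i+j})$ evaluated at $X$. Now set $z := x/i = -ix$. Since $x$ is skew-hermitian, $z$ is hermitian for $(\ ,\ )$. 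Hence
$$c_{i+j} = (u, z^{i+j}u) = (z^iu, z^ju),$$
whichever argument of $(\ ,\ )$ is conjugate-linear. Thus $(c_{i+j})$ is the Gram matrix of the hermitian form of $V_{p,q}$ in the vectors $u, zu, \ldots, z^{n-1}u$. These vectors form a $\mbC$-basis of $V_{p,q}$, because regular semi-simplicity means $\mbC[x]u = V_{p,q}$ and $\mbC[x]=\mbC[z]$. The determinant of a Gram matrix of a hermitian form of signature $(p,q)$ in any basis has sign $(-1)^q$, which gives $τ(Y) = (-1)^q$.

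\emph{Main obstacle.} The only delicate point is bookkeeping. One must check that the powers of $i$ in \eqref{eq:invariant_functions_SU} match the Cayley--Hamilton recursion exactly, so that the matching of $b_k$ with $c_k$ really extends to $k\le 2n-2$; replacing $x$ by $z=x/i$ throughout makes this transparent. One must also fix the sesquilinearity convention of $(\ ,\ )$, but any convention produces a hermitian Gram matrix with the same determinant sign. Alternatively, one can avoid the extension of the moment identity by working directly in $\mbC\tensor_\mbR$ coordinates with the linear map $ρ$ of \eqref{eq:def_ρ_linear_identification}, which identifies the full polynomial $\det(b_{i+j})$ with $\det(c_{i+j})$ as elements of $\mcA$.
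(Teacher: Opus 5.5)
Your proof is correct, and it takes a genuinely different route from the paper.

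\textbf{The paper's route.} It argues locally. Since $\tau$ is locally constant and constant on orbits, it perturbs $Y$ until $y$ is regular semi-simple and conjugates into the real block-diagonal normal form \eqref{eq:block_diag_general}. It then computes the sign of a conjugator block by block: $\mathrm{sign}(v_iw_i)$ for each real eigenvalue, and $-1$ for each pair of complex eigenvalues. Finally it reads off the signature $(r_++s,\,r_-+s)$ from an analogous normal form on the unitary side, which it only asserts.

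\textbf{Your route.} You use the equivariance of the transfer factor to write $\tau(Y)=\wt\eta(Y^t)\wt\eta(Y)$. This is the sign of the Hankel determinant $\det(wy^{i+j}v)$. Via Cayley--Hamilton and the matching, you transport it to the Gram determinant of the Krylov basis $u,zu,\ldots,z^{n-1}u$ of the hermitian operator $z=x/i$, and Sylvester's law finishes.

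\textbf{What each buys.} Your argument is global and purely algebraic. It needs no perturbation, no case split between real and complex eigenvalues, and no separate determination of the signature of a normal form. It also produces an explicit invariant polynomial whose sign computes $\tau$. For $n=2$ this polynomial is $b_0b_2-b_1^2=-\Delta$, consistent with Lemma \ref{lem:Q11_connected} and \eqref{eqn:transfer factor gl2 explicit}. The paper's route, in exchange, gives concrete orbit normal forms of the kind reused in Part III.

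\textbf{A correction to your bookkeeping remark.} The sesquilinearity convention is not irrelevant. The identity $c_k=(u,x^ku)/i^k=(u,z^ku)$ needs $(\ ,\ )$ to be linear in the second variable. This is the paper's convention: the entry $-u^*=-(u,-)$ of a $\mbC$-linear endomorphism must be linear, and the form on $\mbC$ is $\ov{x}y$. With the opposite convention, the $c_k$ would be the moments of $-z$ while the characteristic polynomial is still that of $z$. The Cayley--Hamilton extension of $b_k=c_k$ to $k\le 2n-2$ would then fail in general (already for $n=3$). With the paper's convention, your argument goes through as written.
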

\begin{proof}
Write $Y$ in block matrix form $\left(\begin{smallmatrix} y & v \\ w & d\end{smallmatrix}\right)$. The function $Y\mapsto τ(Y)$ is locally constant on $\mfgl_{n+1}(\mbR)_\rs$. After perturbing $Y$ slightly, we may assume that $y$ is regular semi-simple. Acting on $Y$ with $\GL_n(\mbR)$ and using that $τ(Y)$ only depends on the orbit of $Y$, we may block diagonalize $y$ and assume that $Y$ is of the form
\begin{equation}\label{eq:block_diag_general}
\begin{pmatrix}
λ_1 &    & & & & & & & v_1 \\
& \ddots & & & & & & & \\
& & λ_r    & & & & & & \vdots \\
& & & α_1 & β_1 & & & & \\
& & & -β_1 & α_1 & & & & \vdots \\
& & & & & \ddots & & & \\
& & & & & & α_s & β_s & v_{n-1} \\
& & & & & & -β_s & α_s & v_n \\
w_1 & \hdots & & & \hdots & & w_{n-1}& w_n & d
\end{pmatrix}.
\end{equation}
Here, $v_1,\ldots,v_r,w_1,\ldots,w_r$ are all non-zero because $Y$ is regular semi-simple. Similarly, all $β_1,\ldots,β_s$ are non-zero and no pair
$$(v_{r+1},v_{r+2}),\ \ldots,\ (v_{n-1},v_n),\ (w_{r+1},w_{r+2}),\ \ldots,\ (w_{n-1},w_n)$$
equals $(0,0)$. Constructing an analogously block diagonalized element on the unitary side shows that $Y$ matches to signature $(p, q) = (r_+ + s,\, r_- + s)$ where $r_{\pm}$ is the number of indices $1\leq i\leq r$ with $\mr{sign}(v_iw_i) = \pm$. The matrix $\left(\begin{smallmatrix} v_i/w_i & \\ & 1 \end{smallmatrix}\right)$ conjugates $\left(\begin{smallmatrix} λ_i & v_i \\ w_i & \end{smallmatrix}\right)^t$ back into $\left(\begin{smallmatrix} λ_i & v_i\\ w_i & \end{smallmatrix}\right)$; its determinant has sign $\mr{sign}(v_iw_i)$. An element of negative determinant is needed to conjugate the transpose
$$\begin{pmatrix}
α_i & β_i & v_{r + 2i - 1} \\
-β_i & α_i & v_{r + 2i}\\
w_{r + 2i - 1} & w_{r + 2i} &
\end{pmatrix}^t$$
back into the original $(3\times 3)$-matrix. Taking the product of all resulting signs, we find that $τ(Y) = (-1)^q$ as claimed.
\end{proof}

The two quadratic forms $Q_0, Q_1\in \mcA$ introduced in \S\ref{sec:Weil rep transfer} are transposition invariant. The two corresponding Weil representations on $\mcS(\mfs_n(\mbR))$ hence commute with transposition. In fact, every operator $\mcD\in \mfA$ satisfies
$$(\mcD Φ)^t = \mcD (Φ^t).$$
The Gaussian is transposition invariant, compare \eqref{eq:Siegel_GL}, so $\mcS_n$ is stable under $Φ\mapsto Φ^t$. Taking eigenspaces, we obtain decompositions
\begin{equation}
\begin{aligned}
\mcS_n & = \mcS_n^{t\,=\, \mr{id}} \oplus \mcS_n^{t \,=\, -1}\\
\ov {\mcS_n} & = \ov{\mcS_n}^{t \,=\, \mr{id}} \oplus \ov{\mcS_n}^{t \,=\, -1}\\
\Orb(-,\mcS_n) & = \Orb(-,\mcS_n^{t\,=\,\mr{id}}) \oplus \Orb(-,\mcS_n^{t \,=\, -1}).
\end{aligned}
\end{equation}
Lemma \ref{lem:transposition} explained that $\Orb(-,\mcS_n^{t = \pm 1})$ is supported on invariants of signature $(p,q)$ with $(-1)^q = \pm 1$.

\newpage

\part{Structure of $\Orb(-,\mcS_2)$ and $\Orb(-,\mcT_{1,1})$}

In this part, we show that $\ov{\mcS_2}$ is generated by the images of three functions $Φ_{2,0}$, $Φ_{1,1}$ and $Φ_{0,2}$ as $\mfA$-module. Here, $Φ_{2,0}$ and $Φ_{0,2}$ are from Theorem \ref{thm:pos_def}, while $Φ_{1,1}$ will be specified in \eqref{eq:def_Phi_11}. We similarly show that $\ov{\mcT_{1,1}}$ is generated by the image of $ψ_{1,1}$.

We begin with a brief discussion of the case $n = 1$, which will serve as a (slightly oversimplified) model for the subsequent sections.

\section{The case $n = 1$}\label{s:n_equal_1}
In this section, we prove Conjectures \ref{conj:polynomial_Schwartz_SL} and \ref{conj:cyclic_coinvariants} for $n=1$. We begin by fixing coordinates on the general linear side. As $\lie{sl}_1(\mathbb R) = \{0\}$, the relevant quadratic space is
\[
\lie s_1 = \mathbb R \times \mathbb R
\]
equipped with the quadratic form $Q(v,w) = 2vw$. Following the conventions in \S\ref{ss:infinitesimal_Weil_rep}, we choose the standard basis
\[
y_1 := \frac1{ 2 } (1,\,1), \qquad y_2 := \frac{1}{ 2} (1,\,-1),
\]
and note that $y_1$ is positive, while $y_2$ is negative. Let $x_1, x_2 \in \bbC[\lie s_1]$ denote the coordinate functions with respect to the basis $\{ y_1, y_2\}$.

We pass to the Fock model; although this is not strictly necessary, it will simplify our computations  and help motivate our discussion for the case $n=2$ below. Let $\varphi \in \mcS(\mfs_1)$ denote the Siegel Gaussian function, as in  \eqref{eq:Siegel Gaussian general}, which is concretely given by the formula $\varphi(x_1 y_1 + x_2 y_2) = e^{- \pi (x_1^2 + x_2^2)}$. The polynomial Schwartz space is $\mathcal S_1 = \mathbb C[\lie{s_1}] \varphi$. Let
\[
\mathcal F = \mathcal F[ \lie s_1] = \bbC[z_1, z_2]
\]
denote the corresponding Fock model, cf.\ \Cref{def:Fock}.
An element $t \in \GL_1(\mathbb R) = \mathbb R^{\times}$ acts on $ (v,w) \in \lie s_1$ by
\[ t\cdot (v,w) = (tv, t^{-1}w).
\]
Differentiating the corresponding action $\GL_1(\mathbb R) \to \Aut(\mcS(\lie s_1))$ induces a homomorphism $\lie{gl}_1(\mathbb R) = \mathbb R  \to \bbC\{ \lie s_1\}$; letting  $X = - 1 \in \lie{gl}_1$ denote a basis vector, a direct computation shows that the image of $X$ in $\bbC\{\lie s_1\}$ is
\[
x_1 \frac{\partial}{\partial x_2} + x_2 \frac{\partial}{\partial x_1}.
\]
Applying the intertwiner $\iota \colon \bbC\{\lie s_1\} \isomto \calF\{ \lie s_1\}  = \mathbb C\{ z_1, z_2\}$ from \eqref{eq:Fock_model_intertwiner}, we obtain
\begin{align}\label{eq:n1_Lie_alg_operator}
\iota(X)
&= - \frac{1}{4 \pi } z_1 z_2  + 4 \pi \frac{\partial^2}{\partial z_1 \partial z_2}.
\end{align}

Finally, recall that we have defined the algebra $\mathfrak A = \mbC\langle ω(\mfsl_2 \times \mfsl_2), \mathcal A\rangle$ acting on $\bbC[\lie s_1] \simeq \calF[\lie s_1]$, where in this case, we have  $\mathcal A = \mathbb C[\lie s_1]^{\GL_1(\mathbb R)} = \mathbb C[ x_1^2 - x_2^2]$. The first factor of $\lie{sl}_2$ acts trivially. As for the second factor we record the following explicit formulas, cf.\ \eqref{eq:LRH_Fock}, for the action of $(0,L), (0,R) \in \lie{sl}_2 \times \lie{sl}_2$ on $\mathcal F$:
\[
\omega(0,L ) = 2 \pi \frac{\partial^2}{\partial z_1^2} - \frac{1}{8 \pi} z_2^2, \qquad \omega(0,R) = - \frac{1}{8 \pi} z_1^2 + 2 \pi \frac{\partial^2}{\partial z_2^2}.
\]

\begin{proposition} \label{prop:struct_of_S1}
As an $\lie A$-module, the quotient $\ov{\mcS_1}$ is generated by the classes of the two functions
\[
\Phi_{1,0} := x_1 \varphi, \qquad \Phi_{0,1} := x_2 \varphi,
\]
where $\varphi $ is the Siegel Gaussian on $\lie s_1$. Consequently, the orbital integrals $\Orb(-,\Phi_{1,0})$ and $\Orb(-,\Phi_{0,1})$ generate $\Orb(-,\mcS_1)$ as $\mfA$-module.

\begin{proof}
We translate the problem to the Fock model. For $n \geq 0$, let
\[ \calF^n\subset \calF = \calF[\lie s_1] \]
denote the subspace of polynomials of degree $\leq n$. Using the intertwiner $\iota \colon \mathcal S_1 \simto \calF$, we consider $\GL_1(\mbR)$- and $(\{\pm 1\}, η)$-coinvariants on $\mcF$. Set
\[
R := \lie{A} \cdot z_1 +  \lie{A} \cdot z_2 + \langle L(Φ) \mid L\in \mfgl_1(\mbR),\ Φ\in \mcF\rangle + \mcF^{O(1)}.
\]
Here, we note that $-1\in O(1) = \{\pm 1\}$ acts as $-1$ on $\mfs_1$. The monomial-wise even degree polynomials $\mcF^{O(1)}$ provide the kernel for the $(O(1), η)$-coinvariants in \eqref{eq:def_bar_S_n}.
We also define
\[
R^n =  \begin{cases} \lie A \cdot \calF^{n-1} + R, & n > 0 \\ R, & n=0 \end{cases}
\]
Noting that  $4\pi \iota(\Phi_{1,0}) =  z_1$ and $ 4\pi \iota(\Phi_{0,1}) =  z_2$,  the proposition is equivalent to the equality $\calF = R$; for this, it will suffice to show that $\calF^n  \subset  R^n$ for all $n$, and we proceed by induction on $n$.

The claim is clear for $n = 0$ because $φ\leftrightarrow 1 \in \mcF$ lies in $\mcF^{O(1)}$. It is also clear for $n = 1$ because $z_1,z_2\in R^1$ by construction. For the induction step,  suppose $ f= z_1^a z_2^b \in \calF^{n}$ is a monomial with $a+b=n \geq 2$. If $ab > 0$, then \eqref{eq:n1_Lie_alg_operator} allows  us to write
\[
f =  - 4 \pi \iota(X) ( z_1^{a-1} z_2^{b-1} ) +   (\text{lower degree terms}) \ \in R^n.
\]
Hence, we may assume that $f = z_1^a$ or $f= z_2^b$. If, for example, $a \geq 2$, we have
\[
z_1^{a} = -8 \pi \omega(0,R)(z_1^{a-2}) + (\text{lower degree terms}) \in R^n
\]
and similarly, if $b \geq 2$, then
\[
z_2^b = -8 \pi \omega(0,L)(z_2^{b-2}) +(\text{lower degree terms}) \in R^n.
\]
This concludes the induction.
\end{proof}
\end{proposition}

We can immediately draw our final conclusion for $n = 1$:

\begin{theorem} \Cref{conj:polynomial_Schwartz_SL} holds for $n=1$, i.e. we have an $\lie A$-module equality
\begin{equation}\label{eq:transfer_n_1}
\Orb(-,\mathcal S_1) =  \Orb(-,\mathcal T_{1,0}) \oplus \Orb(-,\mathcal T_{0,1}).
\end{equation}

\begin{proof}
 After an appropriate normalization of Haar measures, \cite[Example 4.16]{MSY} (which is the specialization of \Cref{thm:pos_def} for $n=1$) states that the functions $\Phi_{1,0}$ and $\Phi_{0,1}$ defined above are transfers of $\psi_{1,0}$ and $\psi_{0,1}$ respectively.
By Proposition \ref{prop:struct_of_S1}, $\Orb(-,Φ_{1,0})$ and $\Orb(-,Φ_{0,1})$ generate $\Orb(-,\mcS_1)$ as $\mfA$-module. By Example \ref{ex:cylic_T_definite}, $\Orb(-,ψ_{1,0})$ generates $\Orb(-,\mcT_{1,0})$ and $\Orb(-,ψ_{0,1})$ generates $\Orb(-,\mcT_{0,1})$. Since transfer is compatible with $\mfA$-actions, we obtain \eqref{eq:transfer_n_1}.
\end{proof}
\end{theorem}

Noting that the biweights of $Φ_{1,0}$ and $Φ_{0,1}$ equal those of $ψ_{1,0}$ and $ψ_{0,1}$, Proposition \ref{prop:struct_of_S1} also proves Conjecture \ref{conj:cyclic_coinvariants} for $n = 1$.

\section{Structure of $\Orb(-,{\mcS_2})$} \label{sec:Orb(S2)}
We now turn to the case $n=2$. Our first step is to elucidate the $\lie A$-module structure of $\Orb(-,\mathcal S_2)$.
To begin, we recall the notation from previous sections. Let
\[
\lie{s}_2 := \lie{sl}_2 \times \mathbb R^2 \times \mathbb R_2 = \left.\left\{  \begin{pmatrix} y & v \\ w & 0 \end{pmatrix} \ \right\vert \ \tr(y) = 0  \right\}
\]
with quadratic form $Q(y,v,w) = \tr(y^2) + 2 wv$.  We choose standard basis vectors as follows: for the $\lie{sl}_2$ factor, we take the basis vectors
\[
y_1 = \frac{1}{ 2} \begin{pmatrix} 1 & \\ & -1 \end{pmatrix} , \qquad y_2  = \frac{1}{ 2} \begin{pmatrix} &1  \\ 1 &  \end{pmatrix} \qquad y_3  = \frac{1}{ 2} \begin{pmatrix} & 1\\-1 &  \end{pmatrix}.
\]
and for the $\mathbb R_2 \times \mathbb R^2$ factors, we have basis vectors	
\begin{equation}\label{eq:standard_basis_R_2}	
\begin{aligned}
y'_1 &= \frac1{{2}} \left[ \begin{pmatrix}1 & 0 \end{pmatrix}, \begin{pmatrix} 1 \\ 0 \end{pmatrix} \right],  && y'_2 = \frac1{{2}} \left[ \begin{pmatrix}0 & 1 \end{pmatrix}, \begin{pmatrix} 0 \\ 1 \end{pmatrix} \right]   \\
y'_3 &= \frac1{{2}} \left[ \begin{pmatrix}1 & 0 \end{pmatrix}, \begin{pmatrix} -1 \\ 0 \end{pmatrix} \right], \ 	 && y'_4= \frac1{{2}} \left[ \begin{pmatrix}0 & 1 \end{pmatrix}, \begin{pmatrix} 0 \\ -1 \end{pmatrix} \right].
\end{aligned}
\end{equation}
 Let $x_1, x_2, x_3, x_1', x_2', x_3', x_4' \in \bbC[\lie s_2]$ denote the coordinate functions on $\lie s_2$ with respect to the basis $\{ y_1, y_2, y_3, y_1', y_2', y_3', y_4' \}$.
Note that $y_1, y_2, y_1'$ and $y_2'$ are positive vectors, while the remainder are negative.

Attached to this basis is the Siegel Gaussian $\varphi$, as in \eqref{eq:Siegel Gaussian general}, and the polynomial Schwartz space $\mathcal S_2 = \mathbb C[\lie{s}_2] \varphi$. 	This basis also gives rise to a set of generators for the polynomial Fock space
\[
\mathcal F := \mathcal F[\lie s_2] = \mathbb C[z_1, z_2, \zeta, z_1', z_2', \zeta_3', \zeta_4'].
\]
Here we have written $\zeta, \zeta'_3, \zeta_4'$ instead of $z_3, z_3'$ and $z_4'$ as a mnemonic device to remind us that these variables correspond to negative vectors.
It will be convenient to change coordinates by writing
\begin{equation}\label{eq:new_Fock_vars}
z_{\pm} := z_1 \pm i z_2, \qquad z'_{\pm} = z'_1 \pm i z_2', \qquad \zeta'_{\pm} = \zeta_3' \pm i \zeta_4'
\end{equation}
because these are eigenvectors for the maximal compact $SO(2)\subset \GL_2(\mbR)$. We set
\begin{equation} \label{eqn:GL2 Fock new coords}
\calF_0 := \bbC[ z_+, z_-, \zeta], \qquad \calF_1 := \bbC[z_+', z_-', \zeta_+', \zeta_-']
\end{equation}
so that
\[
\calF = \calF_0 \otimes_{\bbC} \calF_1.
\]
As before, differentiating the action of $\GL_2(\mathbb R)$ on $\mathcal S_2$ and applying the intertwiner $\iota \colon \bbC\{ \lie s_2\} \simto \calF\{ \lie s_2\}$ from \eqref{eq:Fock_model_intertwiner} gives a Lie algebra homomorphism $\lie{gl}_2 \to \calF\{\lie s_2\}$.

We make the $\lie{gl}_2$ action explicit in the following lemma; in the sequel, this will be useful when paired with the observation that $\Orb(X\cdot Φ) = 0$ for all $X \in \lie{gl}_2$ and $Φ \in \mathcal S_2$.

\begin{lemma} \label{lem:orth ops gl2}
Extend the map $\iota \colon \lie{gl}_2 \to \calF\{ \lie s_2\}$ described above to a complex linear map $\iota \colon \lie{gl}_2(\mathbb C) \to \calF\{ \lie s_2\}$. Then we have the following explicit operators for a basis of $\lie{gl}_2(\bbC)$:
\begin{align*}
\xi_1& := \iota (\mathrm{Id}) = - \frac{1}{8 \pi} \left( z_+' \zeta_-' + z_-' \zeta_+' \right)   +  8 \pi \left( \frac{\partial^2}{\partial z_+' \partial \zeta_-'} + \frac{\partial^2}{\partial z_-' \zeta_+'} \right) \\
\xi_2 &:= \iota \begin{psmallmatrix} & -i \\ i & \end{psmallmatrix}  =   2 \left( z_+ \frac{\partial}{\partial {z_+}}-  z_- \frac{\partial}{\partial {z_-}} \right)	+  z'_+ \frac{\partial}{\partial {z_+'}} -  z_-' \frac{\partial}{\partial {z_-'}} +  \zeta_+' \frac{\partial}{\partial {\zeta_+'}} -  \zeta_-' \frac{\partial}{\partial {\zeta_-'}} \\
\xi_3 &:= \iota \begin{psmallmatrix} 1&-i\\-i & -1 \end{psmallmatrix} = \frac{i}{2 \pi} z_- \zeta - \frac{1}{4 \pi }z_-'\zeta_-' - 16 \pi i \frac{\partial^2}{\partial z_+ \partial \zeta}+ 16 \pi \frac{\partial^2}{\partial z_+'  \partial \zeta_+'} \\
\xi_4 & := \iota  \begin{psmallmatrix} 1&i\\i & -1 \end{psmallmatrix}  =  \frac{-i}{2 \pi} z_+ \zeta - \frac{1}{4 \pi }z_+'\zeta_+' + 16 \pi i \frac{\partial^2}{\partial z_- \partial \zeta}+ 16 \pi \frac{\partial^2}{\partial z_-'  \partial \zeta_-'}
\end{align*}

	\begin{proof} Let $X_1, X_2, X_3, X_4 \in \bbC\{ \lie{s}_2\}$ denote the images of the basis vectors
\[
\begin{pmatrix} 1 & \\ & 1 \end{pmatrix} , \begin{pmatrix}1 & \\ & -1 \end{pmatrix} , \, \begin{pmatrix} & 1 \\ & \end{pmatrix} , \, \text{ and } \begin{pmatrix} & \\ 1 & \end{pmatrix}
\]
under the map $\lie{gl}_2(\bbR) \to \bbC\{ \lie s_2\}$. Then, considering $X_1$ for example, setting $c(t) = (t+t^{-1})/2$ and $s(t) = (t - t^{-1})/2$ for $t \in \bbR$, we note that
\begin{multline*}
\left(\left(\begin{smallmatrix} t & \\ & t\end{smallmatrix}\right) Φ \right)(x_1, x_2, x_3, x_1', x_2', x_3', x_4')  = Φ(\left(\begin{smallmatrix} t & \\ & t\end{smallmatrix}\right)^{-1}(x_1, x_2, x_3, x_1', x_2', x_3', x_4'))\\
= Φ(x_1, x_2, x_3, c(t) x_1' + s(t) x_3', c(t) x_2' + s(t) x_4' , s(t)x_1' + c(t) x_3', \, s(t) x_2' + c(t) x_4').
\end{multline*}
for every $Φ \in \mathcal S_2$. Differentiating and setting $t = 1$, we find
\[
X_1 =  x_3' \frac{\partial}{\partial x_1'} + x_4' \frac{\partial}{\partial x_2' } + x_1' \frac{\partial }{\partial x_3'} + x_2' \frac{\partial }{\partial x_4'}
\]
In exactly the same way, we calculate
\begin{small}
\begin{align*}
X_2 &= -2 x_3 \frac{\partial}{\partial x_2} -2 x_2 \frac{\partial}{\partial x_3} + x_3' \frac{\partial}{\partial x_1'} -  x_4' \frac{\partial}{\partial x_2'} +  x_1' \frac{\partial}{\partial x_3'} -   x_2' \frac{\partial}{\partial x_4'} \\
X_3 &= (x_3 - x_2) \frac{\partial}{\partial x_1} + x_1 \left( \frac{\partial}{\partial x_2} + \frac{\partial}{\partial x_3} \right) + \frac12 \left( x_2 '- x_4' \right) \left( \frac{\partial}{\partial x_3'} - \frac{\partial}{\partial x_1'} \right) + \frac12 \left( x_1' + x_3' \right)\left( \frac{\partial}{\partial x_2' } + \frac{\partial}{\partial x_4'}\right) \\
X_4 &= (x_2 + x_3) \frac{\partial}{\partial x_1} - x_1 \left(\frac{\partial}{\partial x_2} - \frac{\partial}{\partial x_3} \right)  + \frac12 \left( x_2' + x_4' \right) \left( \frac{\partial}{\partial x_1'} + \frac{\partial}{\partial x_3'} \right) - \frac12(x_1' - x_3') \left(\frac{\partial}{\partial x_2'} - \frac{\partial}{\partial x_4'} \right).
\end{align*}
\end{small}
The stated formulas for $ξ_1,\ldots,ξ_4$ follow by taking linear combinations of the $X_i$ and substituting \eqref{eq:Fock_model_intertwiner} as well as \eqref{eq:new_Fock_vars}.
\end{proof}
\end{lemma}

Next, we express the action of the infinitesimal Weil representation of $\omega\colon \lie{sl_2} \times \lie{sl}_2 \to \calF\{ \lie s_2\}$ in our coordinates. Concretely, this means starting from \eqref{eq:LRH_Fock} for $z_1, z_2, ζ$ as well as $z_1', z_2', ζ_3', ζ_4'$ and substituting \eqref{eq:new_Fock_vars}. The result is
\begin{equation}\label{eq:Weil_rep_S2}
\begin{aligned}
\omega(L,0) &= 8 \pi \frac{\partial^2}{\partial z_+ \partial z_-} - \frac{1}{8 \pi} \zeta^2 & \omega(0,L)&= 8 \pi \frac{\partial^2}{\partial z_+' \partial z_-'} - \frac1{8\pi} \zeta_+'\zeta_-' \\
\omega(R,0) &=  2 \pi \frac{\partial^2}{\partial \zeta^2} - \frac1{8 \pi} z_+ z_- & \omega(0,R) &=  8 \pi \frac{\partial^2}{\partial \zeta'_+ \partial \zeta'_-}  - \frac{1}{8 \pi} z'_+ z'_- \\
\omega(H,0) &= z_+ \frac{\partial}{ \partial z_+} + z_- \frac{\partial}{\partial z_-} - \zeta \frac{\partial}{\partial \zeta} + \frac12
& \omega(0,H) &=  z'_+ \frac{\partial}{ \partial z'_+} + z'_- \frac{\partial}{\partial z'_-} - \zeta'_+ \frac{\partial}{\partial \zeta'_+}   - \zeta'_- \frac{\partial}{\partial \zeta'_-} .
\end{aligned}
\end{equation}
Finally, recall the algebra $ \lie{A} = \mbC\langle ω(\lie{sl}_2 \times \lie{sl}_2), \mcA\rangle$ defined in \eqref{eq:def_A_fat}, where, for $n=2$, the ring of invariant polynomials is
\[
\mathcal A = \bbC[\lie s_2]^{\GL_2(\bbR)} = \bbC[ \det y, wv, wyv ], \qquad  \lie{s}_2 = \left.\left\{  \begin{pmatrix} y & v \\ w & 0 \end{pmatrix} \ \right\vert \ \tr(y) = 0  \right\}.
\]
We will later need the explicit expression for the action of the invariant polynomial
\[p(y,v,w) := 16 wyv \in \calA
\]
in Fock model coordinates (here the normalizing  factor 16 is just for convenience). In the coordinates $\{ x_i, x_j'\}$, we have
\begin{align*}
p  &= 2(x_1' + x_3', x_2' + x_4') \begin{pmatrix} x_1 & x_2 + x_3 \\ x_2 - x_3 & - x_1 \end{pmatrix} \begin{pmatrix} x_1' - x_3' \\ x_2' - x_4' \end{pmatrix}  \\
&=     \left( (x_1' + i x_2')^2 - (x_3'+ix_4')^2 \right) (x_1 - i x_2) +  \left(  (x_1' - i x_2' )^2 - (x_3' - i x_4')^2 \right)   (x_1 + i x_2)  \\
& \qquad +  4 x_3(-x_1'x_4'+ x_2'x_3').
\end{align*}
Applying the intertwiner  $\iota \colon \bbC\{ \lie{s_2} \} \to \calF\{\lie s_2\}$ on Weyl algebras, a brief calculation yields
\begin{equation} \label{eqn:iota_p_explicit}
\iota(p) =  A + B + C
\end{equation}
with
\begin{equation} \label{eqn:p inv Fock decomp}
\begin{aligned}
A &= \left( z_- / 4 \pi + 2 \partial_{z_+} \right) \left( \left(z'_+/4 \pi + 2 \partial_{z'_-} \right)^2  - \left( \zeta'_+/4 \pi + 2 \partial_{\zeta'_-} \right)^2 \right)   \\
B &=  \left( z_+ / 4 \pi + 2 \partial_{z_-} \right)	  \left( \left(z'_-/4 \pi + 2 \partial_{z'_+} \right)^2  - \left( \zeta'_-/4 \pi + 2 \partial_{\zeta'_+} \right)^2 \right) \\
C&= \frac{-i}{ \pi}\left( \zeta/4\pi + \partial_{\zeta} \right) \Big[ \frac{1}{8 \pi}(z'_+ \zeta'_- - z'_- \zeta'_+) + \left( z'_+ \partial_{\zeta'_+}  - z'_- \partial_{\zeta'_-} - \zeta'_+ \partial_{z'_+} + \zeta'_- \partial_{z'_-} \right) \\
& \qquad\qquad\qquad\qquad\qquad - 8 \pi ( \partial_{z'_+} \partial_{\zeta'_-} - \partial_{z'_-} \partial_{\zeta'_+}) \Big].
\end{aligned}
\end{equation}

\begin{theorem}\label{thm:generators_S2}
As an $\lie A$-module, the coinvariant space $\ov{\mathcal S_2}$ (cf.\ \eqref{eq:def_bar_S_n}) is generated by the inverse images of the  three elements $$\zeta, \  z_+ (z_-')^2 \text{ and } z_+ (\zeta_-')^2 $$  under the isomorphism $\iota \colon \mathcal S_2 \simeq \calF(\lie s_2)$.

\begin{proof}

For an integer $n \geq 0$, let $\calF^{n} \subset \calF(\mfs_2)$ denote the subspace spanned by polynomials of degree $\leq n$. Set
\[
R := \lie A \cdot \zeta + \lie A \cdot z_+(z'_-)^2 + \lie A \cdot z_+(\zeta'_-)^2 + \ker\big(\mcS_2 \twoheadrightarrow \ov{\mcS_2}\big)
\]
and, for $n>0$, define
\[
R^n := \lie A \cdot \calF^{n-1} + R.
\]
By convention, we set $R^0 = R$.
To prove the theorem it suffices to show by induction that
\begin{equation} \label{eqn:gl2 proof induction statement}
\mathcal F^n \subset R^n
\end{equation}
for all $n$. The case $n = 0$ is straightforward: the constant polynomial $\mathbf 1 \in \calF$ corresponds to the Siegel Gaussian $φ\in \mcS_2$ which is $O(2)$-invariant and hence maps to zero in $\ov{\mcS_2}$ because the definition involves taking $(O(2), η)$-coinvariants, see \eqref{eq:def_bar_S_n}. Assume $n \geq 1$ from now on and that $\mcF^{n-1} \subseteq R^{n-1}$ is already established.

Let $f  \in \calF$ be a polynomial of degree $n$. We may assume that $f = f_0 f_1$, where $f_0 \in \calF_0 = \bbC[z_{+}, z_-, \zeta]$, and where
\[
f_1 = (z'_+)^{a_+}(z'_-)^{a_-}(\zeta'_+)^{b_+} (\zeta'_-)^{b_-}	 \in \mathcal F_1 = \bbC[ z'_+, z'_-, \zeta'_+, \zeta'_-]	\]
is a monomial.

\mn   \emph{$(\star)$ Claim: If $a_+a_- \geq 1$ or $b_+b_-\geq 1$, then $f\in R^n$:}  If, for example, we have $a_+ a_- \geq 1$, then we can write
\[
f = - 8 \pi  \, \omega(0,R)  \Big( {f}/{ z'_+ z'_- }\Big)  + (\text{lower degree terms}).
\]
The first summand lies in $\mfA\cdot \mcF^{n-2}$ and the terms of lower degree lie in $R^n$ by induction, so $f$ lies in $R^n$. In exactly the same way, we may use $\omega(0,L)$ to conclude that $f \in R^n$ when $b_+ b_- \geq 1$.

\mn  \emph{$(\star\star)$ Claim: we may reduce to the case $a_-b_- = 0$ and $a_+b_+ = 0$:} Assume that $a_- b_-\geq 1$ and set $g = f/z_-' \zeta_-'$. Then
\[
f = - 4 \pi \xi_3(g) + 2i (z_- \zeta g)+ (\text{lower degree terms})
\]
where $\xi_3$ is the operator from Lemma \ref{lem:orth ops gl2}. Note that $\xi_3(g) \in \ker(\mcS_2 \twoheadrightarrow \ov{\mcS_2}) \subset R^n$. Moreover, the lower degree terms lie in $R^n$ by induction. Thus the claim $f\in R^n$ is reduced to $z_-ζg\in R^n$. 	Applying this operation repeatedly, we have reduced to $a_- b_- = 0$. A similar argument with $\xi_4$ reduces to the case $a_+ b_+ = 0$.

After $(\star)$ and $(\star\star)$, we need to show $f\in R^n$, for all $f$ of degree $n$ which are of the form $f = f_0f_1$ with
\begin{equation} \label{eqn:f1 dichotomy}
f_1 = (z_+')^{a_+} (\zeta_-')^{b_-} \quad\ \ \text{or}\ \ \ \! \quad f_1 = (z_-')^{a_-}(\zeta_+')^{b_+}.
\end{equation}
Assume $f_1$ is of the first form with $a_+b_-\geq 2$, set $g = f/z_+' \zeta_-' $, and write
\begin{align*}
f =  - 8 \pi \xi_1(g) -  z_-' \zeta_+' g + (\text{lower degree terms}).
\end{align*}
The element
\[
z_-' \zeta_+' g = f_0 \cdot \left( (z_+')^{a_+ - 1}z_-' \zeta_+' (\zeta_-')^{b_--1} \right)
\]
lies in $R^n$ by $(\star)$ and by our assumption $a_+b_- \geq 2$. Lower degree terms lie in $R^n$ by induction hypothesis. The second case of \eqref{eqn:f1 dichotomy} is handled similarly, showing that we can reduce \eqref{eqn:f1 dichotomy} to all cases with $a_+b_- \leq 1$ and $a_-b_+\leq 1$.

The operator $ξ_1$ also allows to deduce the case $f_1 = z'_-ζ'_+$ from the case $f_1 = z'_+ζ'_-$. Thus, to summarize the argument so far, it is enough to consider $f$ of degree $n$ of the form $f= f_0 f_1$, where
\begin{equation} \label{eqn:f1 opts}
f_1 \in \{1,\, (z_+')^{a},\, (z_-')^{a},\, (\zeta_+')^a,\, (\zeta_-')^a,\, z_+' \zeta_-'\ | \ a \geq 1 \}.
\end{equation}
We may also assume that $f_0 = z_+^{\alpha_+} z_-^{\alpha_-} \zeta^{\beta}$ is a monomial. Using the operators $\omega(L,0)$ and $\omega(R,0)$, and arguing as in $(\star)$, it suffices to consider the case $\alpha_+ \alpha_- = 0$ and $\beta \leq 1$, i.e. we may assume
\begin{equation} \label{eqn:f0 opts}
f_0 \in \{  \zeta^{\beta},  \, z_+^{\alpha}\zeta^{\beta}, \,   z_-^{\alpha} \zeta^{\beta} \ | \ \alpha \geq 1, \beta \leq 1\}
\end{equation}

Next, we consider the action of $\xi_2$, and observe that if a monomial $f$ has non-zero $ξ_2$-eigenvalue, then $f \in \ker(\mcS_2 \twoheadrightarrow \ov{\mcS_2})$. Note that $(z_{\pm}, ζ, z'_{\pm}, ζ'_{\pm})$ are $ξ_2$-eigenvectors with respective eigenvalues $(\pm 2, 0, \pm 1, \pm 1)$. If we take $f=f_0f_1$ with $f_0$ and $f_1$ in \eqref{eqn:f0 opts} and \eqref{eqn:f1 opts} as above, the only products with vanishing eigenvalue are
\begin{equation} \label{eqn:f opts}
f \in  \{ \zeta^{\beta},  \, \zeta^{\beta} z_+'\zeta_-',  \, \zeta^{\beta}z_+^a(z'_-)^{2a},\zeta^{\beta}z_+^a(\zeta'_-)^{2a}, \zeta^{\beta} z_-^a(z'_+)^{2a},\, \zeta^{\beta} z_-^a(\zeta'_+)^{2a} \ | \ a \geq 1, \beta \leq 1 \}.
\end{equation}
Assume $f$ is divisible by $ζz_+$, for example $f = ζ(z_+)^a(ζ'_-)^{2a}$. Using $\xi_4$, we write
$$f = {2 \pi i} \xi_4( z_+^{a-1} (\zeta'_-)^{2a}) + \frac{i}2 z_+^{a-1} z'_+ \zeta'_+ (\zeta'_-)^{2a}.$$
The second summand lies in $R^n$ by $(\star)$, so $f\in R^n$. Similar arguments (always using $ξ_3$ or $ξ_4$) show that $f\in R^n$ whenever $f$ from \eqref{eqn:f opts} is divisible by $ζz_+$ or $ζz_-$. In this way, we have reduced to
\begin{equation} \label{eqn:f opts better}
f \in  \{1,\, \zeta,\, z_+'\zeta_-',\, ζ z_+'\zeta_-',\, z_+^a(z'_-)^{2a},\, z_+^a(\zeta'_-)^{2a},\, z_-^a(z'_+)^{2a},\, z_-^a(\zeta'_+)^{2a} \ | \ a \geq 1\}.
\end{equation}

At this point, we have exhausted the structure provided by the Weil representation and the $\mfgl_2$-action. To proceed, we use the cubic invariant polynomial from \eqref{eqn:iota_p_explicit}.

Recall that the weight operators $(H,0), (0,H) \in \lie{sl}_2 \oplus \lie{sl}_2$ decompose $\mcF(\mfs_2)$ into weight spaces as per \eqref{eq:weight_monomial}. Monomial differential operators from $ι(\mbC\{\mfs_2\})$ act homogeneously for this decomposition, making $\mbC\{\mfs_2\}$ into a bigraded algebra. Concretely, the biweight of its generators is given as follows.\footnote{The mnemonic is that $z$-variables raise the weight by $1$ and $ζ$-variables lower it by $1$.}
\begin{center}
\def\arraystretch{1.2}
\begin{minipage}{3cm}
\begin{tabular}{|c|l|}
	\hline
	$z_\pm$ & $(1, 0)$\\
	$ζ$ & $(-1,0)$\\
	$z'_{\pm}$ & $(0,1)$\\
	$ζ'_{\pm}$ & $(0,-1)$\\
	\hline
\end{tabular}
\end{minipage}
\begin{minipage}{2.5cm}
\begin{tabular}{|c|l|}
	\hline
	$\partial_{z_\pm}$ & $(-1, 0)$\\
	$\partial_ζ$ & $(1,0)$\\
	$\partial_{z'_{\pm}}$ & $(0,-1)$\\
	$\partial_{ζ'_{\pm}}$ & $(0,1)$\\
	\hline
\end{tabular}
\end{minipage}
\end{center}
Since the weight operators $(H, 0)$ and $(0,H)$ lie in $\mfA$, $\mfA \subset \mbC\{\mfs_2\}$ is a graded subalgebra. In particular, the invariant polynomial $p(y,v,w) = 16wyv$ from \eqref{eqn:iota_p_explicit} has a decomposition
\[
p  = \sum_{ \substack{i \in \{ -1, 1\} \\ j \in \{ -2, 0, 2\} }} p^{(i,j)}
\]
into weight components, with each component $p^{(i,j)}$ again lying in $\mfA$. The components $ι(p^{(i,j)})$ can be read off directly from \eqref{eqn:p inv Fock decomp}; for example,
\[
\iota\left(p^{(1, 2)}\right) = \underset{A^{(1,2)}}{\underbrace{\frac{z_-}{4π} \left(  (z'_+)^2/16 \pi ^2 - 4 \partial^2_{\zeta'_-}\right)}}
+ \underset{B^{(1,2)}}{\underbrace{\frac{z_+}{4π} \left( (z'_-)^2/16\pi^2 - 4 \partial^2_{\zeta'_+}\right)}}
+ \underset{C^{(1,2)}}{\underbrace{\frac{-i}{\pi} \partial_{\zeta} { (z'_+ \partial_{\zeta'_+} - z'_- \partial_{\zeta'_-})}}}.
\]
Now suppose that $f = z_+^a (z'_-)^{2a}$ with $a \geq 2$. We then have
$$
z_+^{a} (z'_-)^{2a} = 64 \pi^3 \ \iota(p^{(1,2)}) \left( z_+^{a-1} (z'_-)^{2a - 2} \right)  - z_+^{a-1} z_- (z'_+)^2 (z'_-)^{2a-2}.
$$
The last summand lies in $R^n$ by the analog of $(\star)$ for \eqref{eqn:f0 opts}, so $f\in R^n$.
Also using the operators $\iota(p ^{(1, -2)})$, the same kind of argument shows that if $a \geq 2$, then
\[
z_+^{a}(\zeta'_-)^{2a}, \ z_-^a(z'_+)^{2a}, \ z_-^a(\zeta'_+)^{2a} \ \in \ R^n.
\]
We also observe that
\begin{align*}
\iota(p^{(1,2)})(\mathbf 1) &= \frac{1}{64 \pi^3} ( z_+(z'_-)^2 + z_-(z'_+)^2),  & \iota(p^{(1,-2)}) (\mathbf 1) &= \frac{-1}{64 \pi^3} ( z_+(\zeta'_-)^2 + z_-(\zeta'_+)^2).
\end{align*}
Thus $z_+(z_-')^2$ lies in $R^1$ if and only if $z_-(z_+')^2$ lies in $R^1$, and similarly for $z_+(ζ'_-)^2 \leftrightarrow z_-(ζ'_+)^2$. At this point, the proof of the theorem is reduced to checking that $f\in R^n$ (where $n = \deg(f)$) for the finite list
\[
f \in \{  \zeta, z_+'\zeta'_-, \zeta z_+' \zeta'_-, z_+(z'_-)^2, z_+(\zeta'_-)^2 \}.
\]

The three elements $\zeta,  z_+(z'_-)^2$ and $z_+(\zeta'_-)^2 $ lie in $R^n$ by definition. For $z'_+ζ'_-$ and $ζz'_+ζ'_-$, we note the two identities
\begin{align*}
-8πξ_1(ζ^k) & = ζ^k(z'_+ζ'_- + z'_-ζ'_+)\\
8iπ^2 ι(p^{(1, 0)})(ζ^{k+1}) & = (k+1)ζ^k(z'_+ζ'_- - z'_-ζ'_+).
\end{align*}
They imply $ζ^kz'_+ζ'_- \in R^{k+2}$ and the proof is complete.
\end{proof}
\end{theorem}

Given $f\in \mcF$, we write $\Orb(-,f) := \Orb(-,ι^{-1}(f))$ for simplicity. Our next goal is to determine $\Orb(-,ζ)$, $\Orb(-,z_+(z'_-)^2)$ and $\Orb(-,z_+(ζ_-')^2)$. We denote the natural coordinates on $\mfsl_2(\mbR)\times \mbR^2\times \mbR_2$ by
\begin{equation}\label{eq:nat_coords}
\begin{pmatrix}
y_{11} & y_{12} & v_1\\
y_{21} & -y_{11} & v_2\\
w_1 & w_2 &
\end{pmatrix}.
\end{equation}
Concerning $ζ$, we calculate
\begin{equation}\label{eq:zeta_explicit}
\begin{aligned}
ι^{-1}(ζ\cdot 1) & = (2πx_3 - \partial_{x_3})φ\\
& = 4πx_3φ\\
& = 4π(y_{12} - y_{21}) φ.
\end{aligned}
\end{equation}
We define
\begin{equation}\label{eq:def_Phi_11}
Φ_{1,1} = (y_{12} - y_{21})φ.
\end{equation}
Then $Φ_{1,1}^t = -Φ_{1,1}$, so $\Orb(-,Φ_{1,1})$ is supported on elements matching to signature $(1,1)$ (Lemma \ref{lem:transposition}). We will show in Part III that $Φ_{1,1}$ is a transfer of $ψ_{1,1}$ (up to constant).

We next study $z_+(z'_-)^2$ and decompose it into real and imaginary part:
$$
\begin{aligned}
z_+(z'_-)^2 & = z_1\big((z_1')^2 - (z_2')^2\big) + 2 z_2 z_1' z_2'\\
& \ + i\big[z_2 \big((z_1')^2 - (z_2')^2\big) - 2z_1z'_1z'_2\big].
\end{aligned}
$$
Consider the element $σ = \left(\begin{smallmatrix} & 1 \\ 1 & \end{smallmatrix}\right)\in O(2)$ and note the relations
$$σ(z_1) = -z_1,\ \ σ(z_2) = z_2,\ \ σ(z_i') = z_{3-i}'.$$
The real part of $z_+(z'_-)^2$ is $σ$-invariant and hence lies in $\ker(\mcS_2\twoheadrightarrow \ov{\mcS_2})$. We obtain
\begin{equation*}
\Orb\big(-,z_+(z'_-)^2\big) = i\cdot \Orb\big(-,\underset{Z_1}{\underbrace{z_2\big((z_1')^2 - (z_2')^2\big)}} - 2 \underset{Z_2}{\underbrace{z_1z'_1z_2'}}\big).
\end{equation*}
We have
$$\begin{aligned}
ι^{-1}(Z_1)(φ) & = 64π^3 x_2\big((x'_1)^2 - (x'_2)^2\big)φ\\
ι^{-1}(Z_2)(φ) & = 64π^3 x_1x_1'x_2' φ.
\end{aligned}$$
Translating to the coordinates from \eqref{eq:nat_coords}, we finally get
\begin{equation}\label{eq:Orb_20}
\frac{1}{64π^3i} \Orb(-, z_+(z'_-)^2) = \Orb(-,Φ_{2,0})
\end{equation}
with
\begin{equation}\label{eq:Phi_KM}
Φ_{2,0} = \big[(y_{12}+y_{21})\big((v_1 + w_1)^2 - (v_2 + w_2)^2\big) - 4 y_{11} (v_1 + w_1)(v_2+w_2)\big]\,φ.
\end{equation}
This agrees with the Schwartz function provided in \cite[Example 4.17]{MSY} (up to constant), so with suitable Haar measure choices, $Φ_{2,0}$ is a transfer of $ψ_{2,0}$ as in Theorem \ref{thm:pos_def}.

Finally, consider again the map $c(y,v,w) = (y, v, -w)$ and define $Φ_{0,2} := c^*(Φ_{2,0})$. This is a transfer of $ψ_{0,2}$ (up to constant), see Theorem \ref{thm:pos_def}. We also have $c^*(z'_\pm) = - ζ'_\pm$, so $c^*(z_+(z'_-)^2) = (z_+(ζ'_-)^2)$ and hence
\begin{equation}\label{eq:Orb_02}
\frac{1}{64 π^3i}\Orb(-, z_+(ζ'_-)^2) = \Orb(-,Φ_{0,2}).
\end{equation}

\begin{corollary}\label{cor:main_S_2}
As an $\mfA$-module, $\Orb(-, \mcS_2)$ is generated by $\Orb(-,Φ_{2,0})$, $\Orb(-,Φ_{1,1})$ and $\Orb(-,Φ_{0,2})$.
\end{corollary}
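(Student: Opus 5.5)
The plan is to deduce the corollary directly from Theorem \ref{thm:generators_S2}, using the factorization of the orbital integral functional through the coinvariants. By Proposition \ref{prop:diff_op_orb_int_generic} and the discussion after \eqref{eq:comm_square_A_fat}, the map $\Phi\mapsto \Orb(-,\Phi)$ is $\mfA$-linear. It factors as
$$\mcS_2 \twoheadrightarrow \ov{\mcS_2}\twoheadrightarrow \Orb(-,\mcS_2).$$
Here $\mfgl_2$-derivatives have vanishing orbital integrals by \eqref{eq:orb_int_vanishing_derivative}. Functions in the kernel of the $(O(2),\eta)$-coinvariant map also have vanishing orbital integrals: for $k\in O(2)$ we have $\Orb(-,k\Phi)=\eta(k)\Orb(-,\Phi)$ by substituting $g\mapsto kg$ in the integral and using invariance of the Haar measure.

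Hence the images of any $\mfA$-module generators of $\ov{\mcS_2}$ generate $\Orb(-,\mcS_2)$ as an $\mfA$-module. By Theorem \ref{thm:generators_S2}, these generators can be taken to be $\iota^{-1}(\zeta)$, $\iota^{-1}(z_+(z'_-)^2)$ and $\iota^{-1}(z_+(\zeta'_-)^2)$. It follows that $\Orb(-,\mcS_2)$ is generated by
$$\Orb(-,\zeta),\qquad \Orb(-,z_+(z'_-)^2),\qquad \Orb(-,z_+(\zeta'_-)^2).$$

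It remains to identify these three functions, up to nonzero scalars, with the orbital integrals of $\Phi_{1,1}$, $\Phi_{2,0}$ and $\Phi_{0,2}$. Since scalar multiples do not change the generated submodule, this finishes the proof.
\begin{itemize}
\item By \eqref{eq:zeta_explicit}, $\iota^{-1}(\zeta)=4\pi\,\Phi_{1,1}$ with $\Phi_{1,1}$ as in \eqref{eq:def_Phi_11}.
\item By \eqref{eq:Orb_20}, $\Orb(-,z_+(z'_-)^2)=64\pi^3 i\,\Orb(-,\Phi_{2,0})$.
\item By \eqref{eq:Orb_02}, $\Orb(-,z_+(\zeta'_-)^2)=64\pi^3 i\,\Orb(-,\Phi_{0,2})$.
\end{itemize}

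The only step needing care is the identity \eqref{eq:Orb_20}. It uses that the real part of $z_+(z'_-)^2$ is $\sigma$-invariant for $\sigma\in O(2)$ with $\eta(\sigma)=-1$, so it has vanishing orbital integral. It then converts the imaginary part back to the coordinates \eqref{eq:nat_coords}, and the paper has already done this computation. The analogous statement for $\Phi_{0,2}$ follows via $c^*$, using that $c^*$ intertwines the relevant orbital integrals.
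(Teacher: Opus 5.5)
Your proposal is correct and follows the paper's own proof. The paper likewise combines Theorem \ref{thm:generators_S2}, the factorization of $\Orb$ through $\ov{\mcS_2}$, and the identities \eqref{eq:zeta_explicit}/\eqref{eq:def_Phi_11}, \eqref{eq:Orb_20}, \eqref{eq:Orb_02}. One small slip: the identity $\Orb(-,k\Phi)=\eta(k)\Orb(-,\Phi)$ comes from the right translation $g\mapsto gk^{-1}$, using unimodularity of $\GL_2(\mbR)$, not from $g\mapsto kg$.
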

\begin{proof}
This is clear from Theorem \ref{thm:generators_S2}, the fact that $\Orb(-,-)$ factors over $\mcS_2\twoheadrightarrow \ov{\mcS_2}$, and identities \eqref{eq:def_Phi_11}, \eqref{eq:Orb_20}, \eqref{eq:Orb_02}.
\end{proof}

\begin{remark}
We remark that since $ζ$, $z_+(z'_-)^2$ and $z_+(ζ'_-)^2$ have different biweight for the $\mfsl_2\times \mfsl_2$-action, $\ov{\mcS_2}$ is itself a cyclic $\mfA$-module generated by $ζ + z_+(z'_-)^2 + z_+(ζ'_-)^2$.
\end{remark}

\section{Structure of $\Orb(-,{\mcT_{1,1}})$}
Our approach in this section mirrors that of \Cref{sec:Orb(S2)}. Let $V = V_{1,1}  $ be a complex Hermitian space of signature $(1,1)$ with Hermitian form $\langle \cdot, \cdot \rangle$. We fix a basis $\{ e,f\}$ of $V$ with 
\[
\langle e,e\rangle = \frac14, \qquad \langle f,f\rangle = -\frac14, \qquad \langle e,f \rangle = 0.
\]
With respect to this basis, we have
\[
\lie u(V) \simeq \left\{   \begin{pmatrix} a & b \\ \overline b & d \end{pmatrix} \Big| \ a + \overline a = d + \overline d = 0  , \ b \in \mathbb C \right\}.
\]
As described in \Cref{ss:reduction_trace_0}, the relevant quadratic space is
\[
\lie{t} = \lie{t}_{1,1} := \lie{su}(V_{1,1}) \oplus  V_{1,1} = \left.\left\{  \begin{pmatrix} x & u \\- u^* \end{pmatrix} \ \right\vert \ \tr(x)= 0 \right\}
\]
with quadratic form $Q(x,u) = -\tr(x^2) + 2 \langle u, u \rangle$  .  We take the standard basis
\[
X_1 = \frac{1}{ 2} \begin{pmatrix} i & \\ & -i \end{pmatrix}, \qquad X_2 = \frac{1}{ 2} \begin{pmatrix} 0 & 1 \\ 1 & 0 \end{pmatrix}, \qquad X_3 = \frac{1}{ 2} \begin{pmatrix} 0 & i \\ -i & 0 \end{pmatrix}
\]
for $\lie {su}(V_{1,1})$, and the standard (real) basis
\begin{equation*}
u_1' = e , \qquad   u_2' = ie , \qquad u_3' = f , \qquad u_4' = if.
\end{equation*}
for $V_{1,1}$. Note that the vectors $X_1, u_1'$ and $u_2'$ are positive, while the remainder are negative. This choice of basis yields generators for the Fock model
\[
\calG :=   \calF[\lie t] = \bbC[ t_1, \tau_2, \tau_3, t_1', t_2', \tau_3', \tau_4']
\]
where as before, we label  variables $τ_2, \tau_3, \tau_3', \tau_4'$ so as to remind us that they correspond to negative basis vectors. For future use, we introduce new coordinates
\[
t = t_1, \qquad \tau_{\pm} = \tau_2 \pm i \tau_3, \qquad t'_{\pm} = t'_1 \pm i t'_2, \qquad \tau'_{\pm} = \tau'_3 \pm i \tau'_4.
\]
These are eigenvectors for the diagonal $U(1)\times U(1)\subset U(V_{1,1})$.

The Lie algebra $\mfu(V)$ and the maximal compact $U(1)\times U(1)\subset U(V)$ act on the polynomial Schwartz space $\mathcal T_{1,1}$, where the Siegel Gaussian $ψ_{1,1}$ is defined by the hermitian form $H$ with $H(e,e) = H(f,f) = 1/4$, $H(e,f) = 0$. We obtain a map $\lie{u}(V) \to \calF\{ \lie t  \} $ by applying the intertwiner $\iota \colon  \bbC\{ \lie t \} \simeq \calF\{ \lie t\} $. We extend it $\mbC$-linearly to a complex linear map $\lie{u}(V) \otimes \bbC \to \calF\{ \lie t  \} \subset \End(\mathcal G)$. The following lemma makes this map explicit, and is proved in the same way as \Cref{lem:orth ops gl2}.
\begin{lemma}
We have the following explicit formulas for the action of a set of  basis vectors of $\lie{u}(V) \otimes_{\bbR} \bbC$:
\begin{align*}
\mu_1 &:= \iota \left(  \begin{psmallmatrix}
i & \\ & i  \end{psmallmatrix} \otimes 1 \right) =  -i \left( t'_+ \frac{\partial}{\partial t'_+} - t'_- \frac{\partial}{\partial t'_-} + \tau'_+ \frac{\partial}{\partial \tau'_+} - \tau'_- \frac{\partial}{\partial \tau'_-}  \right)\\
\mu_2 &:= \iota \left( \begin{psmallmatrix} i & \\ & -i \end{psmallmatrix} \otimes1 \right)   = -i \left( 2 \tau_+ \frac{\partial}{\partial \tau_+} - 2 \tau_- \frac{\partial}{\partial  \tau_-}  +  t'_+ \frac{\partial}{\partial t'_+} - t'_- \frac{\partial}{\partial t'_-} - \tau'_+ \frac{\partial}{\partial \tau'_+} + \tau'_- \frac{\partial}{\partial \tau'_-}  \right) \\
\mu_3 &:= \iota \left(  \begin{psmallmatrix} & i \\ -i & \end{psmallmatrix} \otimes 1 + \begin{psmallmatrix} & 1 \\ 1 & \end{psmallmatrix} \otimes i \right)
=  \frac1{2 \pi }t  \tau_- - 16 \pi \frac{\partial^2}{\partial t \partial \tau_+}   + \frac{i}{4 \pi}\, t'_- \tau'_+ - 16 \pi i \frac{\partial^2}{\partial t'_+ \partial \tau'_- } \\
\mu_4 &:= \iota \left(  \begin{psmallmatrix} & i \\ -i & \end{psmallmatrix} \otimes 1 - \begin{psmallmatrix} & 1 \\ 1 & \end{psmallmatrix} \otimes i \right)
=  \frac1{2 \pi }  t \tau_+ - 16 \pi \frac{\partial^2}{\partial t \partial \tau_-} - \frac{i}{4 \pi }\, t'_+ \tau'_- + 16 \pi i \frac{\partial^2}{\partial t'_- \partial \tau'_+}.
\end{align*}
\qed
\end{lemma}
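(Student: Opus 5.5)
The plan is to follow the proof of Lemma \ref{lem:orth ops gl2} verbatim, now on $\mft = \mfsu(V_{1,1})\oplus V_{1,1}$. Write an element as $\sum_i a_i X_i + \sum_j b_j u'_j$ with real coordinates $x_1,x_2,x_3$ (for $X_1,X_2,X_3$) and $x'_1,\dots,x'_4$ (for $u'_1,\dots,u'_4$). For each real basis element $Z\in\mfu(V)$, namely $\diag(i,i)$, $\diag(i,-i)$, $\left(\begin{smallmatrix} & i\\ -i & \end{smallmatrix}\right)$ and $\left(\begin{smallmatrix} & 1\\ 1 & \end{smallmatrix}\right)$, I compute the vector field obtained by differentiating $Φ\mapsto Φ(\exp(-sZ)\cdot -)$ at $s=0$. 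The action is $(x,u)\mapsto([Z,x], Zu)$, up to the overall sign coming from $g^{-1}$; this sign convention is fixed exactly as in Lemma \ref{lem:orth ops gl2}, where $\iota(\mathrm{Id})$ had a definite sign. This gives first-order operators $\sum (\text{linear}) \partial$ in the $x_i, x'_j$.

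Concretely:
\begin{itemize}
\item The central element $\diag(i,i)$ acts trivially on $\mfsu(V)$. On $V$ it acts by multiplication by $i$, which rotates the pairs $(x'_1,x'_2)$ and $(x'_3,x'_4)$, giving $x'_1\partial_{x'_2}-x'_2\partial_{x'_1}+x'_3\partial_{x'_4}-x'_4\partial_{x'_3}$ up to sign.
\item The element $\diag(i,-i)$ rotates the pair $(X_2,X_3)$ with speed $2$ and acts on $e,f$ by $i$ and $-i$ respectively.
\item The off-diagonal elements mix $X_1$ with $X_2$ or $X_3$, and mix $e$ with $f$.
\end{itemize}
All four operators are $\mfso$-type vector fields for the quadratic form $Q$, i.e.\ of the shape $x_a\partial_{x_b}\mp x_b\partial_{x_a}$.

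Next I translate into the Fock model using \eqref{eq:Fock_model_intertwiner}, i.e.\ $x=\frac{z}{4\pi}+\partial_z$ and $\partial_x = \frac{z}{2}-2\pi\partial_z$ (up to the normalization there). There are two cases:
\begin{itemize}
\item If $x_a,x_b$ have the same sign, the rotation $x_a\partial_b-x_b\partial_a$ maps to $z_a\partial_{z_b}-z_b\partial_{z_a}$. This is first order and weight-preserving, and explains the Euler-type operators $\mu_1,\mu_2$.
\item If $x_a,x_b$ have opposite signs, the boost $x_a\partial_b+x_b\partial_a$ maps to $-\frac{1}{4\pi}z_az_b+4\pi\partial_{z_a}\partial_{z_b}$, as in \eqref{eq:n1_Lie_alg_operator}. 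This explains the quadratic-plus-second-order shape of $\mu_3,\mu_4$.
\end{itemize}
Finally I pass to $t,\tau_\pm,t'_\pm,\tau'_\pm$, using $\partial_{\tau_\pm}=\frac12(\partial_{\tau_2}\mp i\partial_{\tau_3})$ and similarly for the primed variables, and take the stated complex combinations. For $\mu_3$ and $\mu_4$, the combination $\left(\begin{smallmatrix} & i\\ -i&\end{smallmatrix}\right)\otimes1\pm\left(\begin{smallmatrix} &1\\1&\end{smallmatrix}\right)\otimes i$ is chosen precisely so that only $t\tau_\mp$ and $\partial_t\partial_{\tau_\pm}$ survive in the $\mfsu$-part, and only $t'_\mp\tau'_\pm$ and $\partial_{t'_\pm}\partial_{\tau'_\mp}$ in the $V$-part.

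As a consistency check, each $\mu_k$ must have the $U(1)\times U(1)$-weight predicted by $\mu_1,\mu_2$, and must commute with $\omega(\mfsl_2\times\mfsl_2)$; together these pin down which monomials can appear. The main obstacle is purely bookkeeping: the constants $\frac{1}{2\pi}$, $16\pi$, $\frac{i}{4\pi}$ and the signs. The factor $1/2$ in the basis, together with $\langle e,e\rangle=\frac14$, rescales these constants, so I would verify them by applying each operator to the low-degree test vectors $1$, $t$ and $t'_\pm$. These are easy to compute directly in the Schwartz model, e.g.\ as $\mu_3(1)=\frac{1}{2\pi}\cdot 0+\frac{i}{4\pi}t'_-\tau'_+$ obtained from the derivative of $ψ_{1,1}$-type Gaussians.
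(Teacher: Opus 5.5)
Your route is the paper's own: it proves this lemma exactly as Lemma~\ref{lem:orth ops gl2}, by differentiating $\Phi\mapsto\Phi(\exp(-sZ)\cdot -)$ for a real basis of $\mfu(V)$, applying the Fock intertwiner, and passing to $t,\tau_\pm,t'_\pm,\tau'_\pm$. Carried out with that convention, the computation reproduces all four formulas as stated.

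Since signs and constants are the entire content of the lemma, two slips need fixing.

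First, subtracting the two rules in \eqref{eq:Fock_model_intertwiner} gives $\iota(\partial_x)=2\pi\partial_z-\frac{z}{2}$, not $\frac{z}{2}-2\pi\partial_z$. Your version satisfies $[\iota(\partial_x),\iota(x)]=-1$, so it is not a ring map, and it would negate every $\mu_k$. The rotation and boost translations you then quote are the correct ones, so only the displayed formula is wrong.

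Second, your test value is wrong. In fact $\mu_3(\mathbf 1)=\frac{1}{2\pi}t\tau_-+\frac{i}{4\pi}t'_-\tau'_+$, because the multiplication term $\frac{1}{2\pi}t\tau_-$ does not vanish on $\mathbf 1$. You can check this directly in the Schwartz model:
\begin{itemize}
\item $\left(\begin{smallmatrix} & i\\ -i & \end{smallmatrix}\right)=2X_3$ acts on the $\mfsu$-coordinates by $-2(x_2\partial_{x_1}+x_1\partial_{x_2})$. This sends $\psi_{1,1}$ to $8\pi x_1x_2\psi_{1,1}$, which corresponds to $\frac{1}{2\pi}t\tau_2$.
\item $i\cdot\left(\begin{smallmatrix} & 1\\ 1 & \end{smallmatrix}\right)$ contributes $-\frac{i}{2\pi}t\tau_3$.
\end{itemize}
Together these give $\frac{1}{2\pi}t\tau_-$.
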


The action of the Weil representation in our coordinates is as follows:
\begin{equation}\label{eq:Weil_rep_T11}
\begin{aligned}
\omega(L,0) &= 2 \pi \frac{\partial^2}{\partial t^2} - \frac{1}{8 \pi} \tau_+\tau_- & \omega(0,L) &= 8 \pi \frac{\partial^2}{\partial t'_+ \partial t'_-} - \frac{1}{8 \pi} \tau'_+ \tau'_- \\
\omega(R,0) &= -\frac{1}{8 \pi} t^2 + 8 \pi \frac{\partial^2}{\partial \tau_+ \partial \tau_-} &
\omega(0,R) &= - \frac{1}{8 \pi} t'_+ t'_- +  8 \pi \frac{\partial^2}{\partial \tau'_+ \partial \tau'_-} \\
\omega(H,0) &=  t\frac{ \partial }{\partial t} - \tau_+ \frac{\partial }{\partial \tau_+} - \tau_-\frac{\partial}{\partial \tau_-} - \frac12
&
\omega(0,H) &= t'_+\frac{ \partial }{\partial t'_+} + t'_-\frac{ \partial }{\partial t'_-} - \tau'_+ \frac{\partial}{\partial \tau'_+} - \tau'_- \frac{\partial}{\partial \tau'_-}.
\end{aligned}
\end{equation}
Finally, we have the algebra $\lie A \subset \mbC\{\mft_V\}^{U(V)}$ from \eqref{eq:A_fat_on_unitary_side}. We consider the invariant polynomial
 \begin{align*}
r := 8 \langle u, xu \rangle.
\end{align*}
In terms of the coordinates $x_i, x_j'$ with respect to the basis   $X_1, X_2, X_3, u'_1, \dots u'_4$ above, we have
$$\begin{aligned}
r
= i x_1 \left( (x'_1)^2 +( x'_2)^2 + (x'_3)^2 + (x'_4)^2 \right) & + (x_2 - i x_3)(x_1' + i x_2') (x_3' - i x_4')\\ & - (x_2 + i x_3)(x_1'-ix_2')(x_3'+ix_4').
\end{aligned}$$
Applying the intertwiner $\iota \colon \bbC\{ \lie t\} \to \calF\{ \lie t \}$, we have the explicit expression
\begin{equation} \label{eqn:iota r explicit}
\iota(r)  = A + B  + C
\end{equation}
where
\begin{align*}
A &= -i \left( \frac1{4\pi} t + \partial_t \right) \times\\
&\quad \times \left( \frac{ t'_+ t'_- + \tau'_+\tau'_-}{16 \pi^2}  + \frac{1}{\pi}   + \frac{1}{2 \pi} \left( t'_+ \partial_{t'_+} + t'_- \partial_{t'_-} + \tau'_+ \partial_{\tau'_+} + \tau'_- \partial_{\tau'_-} \right)  + 4 \left( \partial_{t'_+} \partial_{t'_-} + \partial_{\tau'_+} \partial_{\tau'_-}\right)\right) \\[1mm]
B &= \left( \frac{\tau_-}{4 \pi} + 2 \partial_{\tau_+} \right)  \left( \frac{t'_+}{4 \pi} + 2 \partial_{t'_-} \right)  \left( \frac{\tau'_-}{4 \pi} + 2 \partial_{\tau'_+} \right)  \\[1mm]
C &= -  \left( \frac{\tau_+}{4 \pi} + 2 \partial_{\tau_-} \right)  \left( \frac{t'_-}{4 \pi} + 2 \partial_{t'_+} \right)  \left( \frac{\tau'_+}{4 \pi} + 2 \partial_{\tau'_-} \right).
\end{align*}

\begin{theorem}\label{thm:generator_T11}
The coinvariant space $\ov{\mcT_{1,1}}$, cf. \eqref{eq:def_bar_T_pq}, is a cyclic $\lie A$-module, generated by the image of the Gaussian $ψ_{1,1}$.
\begin{proof}
The Gaussian corresponds to $\mathbf 1 \in \mcG$. For an integer $n \geq 0$, let $\calG^n$ denote the subspace of $\calG$ consisting of polynomials of degree $\leq n$. Set
\[
R := \mfA \cdot \mathbf 1 + \ker\big(\mcT_{1,1}\twoheadrightarrow \ov{\mcT_{1,1}}\big) \subset \calG.
\]
and
\[
R^n := \mfA \cdot \calG^{n-1} + R
\]
with $R^0 = R$.
It suffices to show
\[
\calG^n \subset R^n
\]
for all $n$. We proceed by induction, noting that the case $n=0$ is immediate.

Let $\calG_0 := \bbC[ t, \tau_+, \tau_-]$ and $\calG_1 = \bbC[t'_+, t'_-, \tau'_+, \tau'_-]$, so that $\calG = \calG_0 \otimes \calG_1$. Now suppose $g  = g_0 g_1$ is of degree $n$ with $g_i \in \calG_i$, and further assume that
\[
g_1 = (t'_+)^{a_+} (t'_-)^{a_-} (\tau'_+)^{b_+} (\tau'_-)^{b_-}
\]
is a monomial. If $a_+ a_- \geq 1$, then
\[
g = -8 \pi \omega(0,R) \left( g/t'_+t'_- \right) + (\text{lower degree terms}) \in R^n.
\]
Similarly, if $b_+ b_- \geq 1$, then we may use $\omega(0,L)$ to conclude $g \in R^n$. We may therefore assume $a_+ a_- = b_+ b_- = 0$.

If $a_- b_+ \geq 1$, then we may write
\[
g = \frac{4 \pi}{i} \mu_3 (g/ t'_- \tau'_+) + 2i t \tau_- g/t'_- \tau'_+ + (\text{lower degree terms}).
\]
Thus, at the cost of replacing $g_0$ by a different polynomial, we may assume  that $a_- b_+ = 0$. A similar argument with $\mu_4$ allows us to further assume $a_+ b_- = 0$.

To summarize so far, we may assume that $g = g_0 g_1 $ with
\[
g_1 \in \{ (t'_+)^a (\tau'_+)^b, \ (t'_-)^c(\tau'_-)^d  \ | \ a,b,c,d \geq 0\}.
\]
Now consider the action of $\mu_1$, and note that if $g$ is an eigenvector with non-zero eigenvalue, then $g \in \ker(\mcT_{1,1}\twoheadrightarrow \ov{\mcT_{1,1}})$. If $g = g_0 g_1$ with $g_1 $ from the above list, the only choice yielding $\mu_1(g) = 0$ is the case $g_1 = 1$.

In other words, we have reduced to the case
$$g = g_0 \in \calG_0,$$
and we may as well assume that $g_0 = t^{\alpha} \tau_+^{\beta_+}\tau_-^{\beta_-}$ is itself a monomial. Using the action of $\omega(R,0)$ and $\omega(L,0)$ as above, we may reduce to the case $\alpha \leq 1$ and $\beta_+ \beta_- = 0$, so that it suffices to consider
\[
g \in \{  1, \  t, \ \tau_+^{\alpha}, \ \tau_-^{\alpha}, \ t \tau_+^{\alpha}, \ t \tau_-^{\alpha} \ | \ \alpha \geq 1  \}.
\]
All these elements are eigenvectors for $\mu_2$ and the only ones with zero eigenvalue are
\[
g= \mathbf 1, \qquad g = t.
\]
To conclude the proof, it will suffice to show that $t \in R^1$. To this end, let $r^{(1,0)} \in ω(\lie A)$ denote the component of the invariant polynomial $r = -8 \langle u, xu \rangle$ of weight $(1,0)$ with respect to the action of $\omega(H, 0)$ and $\omega(0, H)$ respectively.  Using \eqref{eqn:iota r explicit}, we see that
\[
\iota(r^{(1,0)}) = i \frac{t}{4\pi }   \left\{   \frac{1}{\pi} + \frac{1}{2 \pi} \left( t'_+ \partial_{t'_+} + t'_- \partial_{t'_-} + \tau'_+ \partial_{\tau'_+} + \tau'_- \partial_{\tau'_-} \right) \right\}  + \partial_{\tau_+} \left( \cdots \right) + \partial_{\tau_-}\left( \cdots \right).
\]
In particular,
\[
\left( \frac{i}{4 \pi^2}\right)t = \iota(r^{(1,0)}) (\mathbf 1)
\]
and hence $t \in R^1$ as required.
\end{proof}
\end{theorem}

\begin{corollary}\label{cor:main_T_11}
The space $\Orb(-,\mcT_{1,1})$ is generated by $\Orb(-,ψ_{1,1})$ as $\mfA$-module.
\end{corollary}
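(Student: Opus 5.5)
The corollary should follow formally from Theorem \ref{thm:generator_T11}, in the same way that Corollary \ref{cor:main_S_2} follows from Theorem \ref{thm:generators_S2}. Here is the plan.

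First, the orbital integral functional $Ψ\mapsto \Orb(-,Ψ)$ on $\mcT_{1,1}$ kills the Lie algebra coinvariant relations. For $L\in \mfu(V_{1,1})$ and $Ψ\in \mcT_{1,1}$, the vanishing $\Orb(-,L(Ψ))=0$ is the computation \eqref{eq:orb_int_vanishing_derivative} in the proof of Proposition \ref{prop:diff_op_orb_int_generic}, with trivial character $η$ on the unitary side. Since $\Orb(-,-)$ is $\mbC$-linear, it vanishes on the span $\langle L(Ψ)\rangle$. This holds for the complexified elements $μ_1,\ldots,μ_4$ as well, being $\mbC$-linear combinations. Hence $\Orb$ factors as $\mcT_{1,1}\twoheadrightarrow \ov{\mcT_{1,1}}\twoheadrightarrow \Orb(-,\mcT_{1,1})$, as already recorded after \eqref{eq:def_bar_T_pq}.

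Second, this factorization is $\mfA$-linear. Every $\mcD\in\mfA\subset \mbC\{\mft_V\}^{U(V)}$ is an invariant differential operator, so Proposition \ref{prop:diff_op_orb_int_generic} gives
$$\Orb(-,\mcD(Ψ))=(\mcD\vert_\mcQ)(\Orb(-,Ψ)).$$
The $\mfA$-module structure on $\Orb(-,\mcT_{1,1})$ is through $\mcD\mapsto\mcD\vert_\mcQ$, as in \eqref{eq:comm_square_A_fat}. Invariance of $\mcD$ ensures that $\mcD$ preserves the coinvariant kernel, since $\mcD\circ L = L\circ\mcD$ for $L\in\mfu(V)$, so the action descends to $\ov{\mcT_{1,1}}$. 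The surjection $\ov{\mcT_{1,1}}\twoheadrightarrow\Orb(-,\mcT_{1,1})$ is therefore a surjection of $\mfA$-modules.

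Finally, a surjective module map sends a generator to a generator. Theorem \ref{thm:generator_T11} says the image of $ψ_{1,1}$ (corresponding to $\mathbf 1\in\mcG$) generates $\ov{\mcT_{1,1}}$. Hence every $\Orb(-,Ψ)$ equals $(\mcD\vert_\mcQ)\Orb(-,ψ_{1,1})$ for some $\mcD\in\mfA$.

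There is no real obstacle. The only point to check is that the Fock-model identification $ι$ used in Theorem \ref{thm:generator_T11} is compatible with the $\mfA$-action and with the kernel, which holds because $ι$ is a ring isomorphism of Weyl algebras intertwining the actions.
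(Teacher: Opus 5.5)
Your proposal is correct and follows the paper's proof: the orbital integral map factors through the coinvariants $\ov{\mcT_{1,1}}$ as a surjection of $\mfA$-modules, and Theorem \ref{thm:generator_T11} says $\ov{\mcT_{1,1}}$ is cyclic, generated by the image of $\psi_{1,1}$. You spell out details the paper leaves implicit. These are the vanishing on $\mfu(V)$-derivatives via \eqref{eq:orb_int_vanishing_derivative}, the $\mfA$-linearity via Proposition \ref{prop:diff_op_orb_int_generic}, and the fact that invariant operators commute with $\mfu(V)$; the argument is otherwise the same.
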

\begin{proof}
The map $\mcT_{1,1}\twoheadrightarrow \Orb(-,\mcT_{1,1})$ factors over $\ov{\mcT_{1,1}}$, and the latter is cyclic generated by $ψ_{1,1}$ by Theorem \ref{thm:generator_T11}.
\end{proof}

\begin{corollary} \label{cor:cyclic coinv n=2}
  \Cref{conj:cyclic_coinvariants} holds when $n = 2$.
  \begin{proof}
  	The fact that $\overline{\mathcal T_{p,q}}$ is generated by $\psi_{p,q}$ follows from \Cref{ex:cylic_T_definite} for $(p,q) = (2,0) $ or $(0,2)$, and from \Cref{thm:generator_T11} for $(p,q) = (1,1)$. As for $\overline{\mathcal S_2}$, the three generators $\Phi_{2,0}, \Phi_{1,1}, \Phi_{0,2}$ are described in \Cref{thm:generators_S2} (and the discussion thereafter), and it is easily seen from the formulas in \Cref{ss:Fock model} that the biweights of $\Phi_{p,q}$, with respect to the Weil representation, are equal to those of $\psi_{p,q}$. They are equal to $(3/2, 2)$ for $Φ_{2,0}$ and $ψ_{2,0}$, equal to $(-1/2, 0)$ for $Φ_{1,1}$ and $ψ_{1,1}$, and equal to $(3/2, -2)$ for $Φ_{0,2}$ and $ψ_{0,2}$. 
  \end{proof}
\end{corollary}

\part{Transfer for $n = 2$}
\section{Main result}
\label{s:transfer_main}
The proof of the following theorem will be given in the next two sections, see \Cref{ss:proof main thm 11} below.
\begin{theorem}\label{thm:main_signature_11}
Let $
\Phi_{1,1} :=	 (y_{12}-y_{21})φ\in \mcS_2$ be as in \eqref{eq:def_Phi_11} and let  $ ψ_{1,1}\in \mcT_{1,1}$ be the Siegel Gaussian, as before. Given the normalization of Haar measures as in \S\ref{s:matching_beta_0} below, we have
\[
\Orb( - , \Phi_{1,1}) = \sqrt 2 \Orb(-, \psi_{1,1}).
\]
\end{theorem}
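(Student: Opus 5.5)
The plan follows the three-step strategy sketched in the introduction. Both sides are functions on the open set $\mcQ(\mbR)_{\rs,(1,1)}\subset\mbR^3$ of invariants matching signature $(1,1)$. Away from it, $\Orb(-,\psi_{1,1})$ vanishes by definition. By Lemma \ref{lem:transposition}, $\Orb(-,\Phi_{1,1})$ vanishes on signatures $(2,0),(0,2)$, since $\Phi_{1,1}^t=-\Phi_{1,1}$ while $\tau=+1$ there. So it suffices to prove the identity on $\mcQ(\mbR)_{\rs,(1,1)}$.

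I would use coordinates $(\det y,\, wv,\, wyv)$, or a rescaled version $(\alpha,\beta,\gamma)$ with $\beta$ corresponding to $wv$ (resp.\ $(u,u)$). The first step is a direct computation on the hyperplane $\{\beta=0\}$. On the $\GL_2$-side, conjugate $Y$ to a normal form: for signature $(1,1)$ one can take $y$ split diagonal $\diag(\lambda,-\lambda)$ with $v_iw_i$ of opposite signs, or elliptic with suitable $(v,w)$. Write the orbital integral of $(y_{12}-y_{21})\varphi$ as a triple integral over $\GL_2(\mbR)/\{\pm1\}$, using an Iwasawa or $KAK$-type decomposition. Similarly, write the unitary orbital integral of $\psi_{1,1}$ as a double integral over $U(1,1)$, modulo a compact torus part. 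When $\beta=0$, both integrands simplify: the cross terms in the Gaussian exponents decouple. Elementary substitutions, completing squares and integrating out one Gaussian variable, should then reduce both sides to the same one- or two-variable integral. This determines the constant $\sqrt2$ once the Haar measures are normalized as in \S\ref{s:matching_beta_0}.

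The second step propagates the identity off $\{\beta=0\}$. By Corollary \ref{cor:cyclic coinv n=2}, both $\Phi_{1,1}$ and $\psi_{1,1}$ have biweight $(-1/2,0)$. Hence they are annihilated by $\omega(H,0)+\tfrac12$, and also by $\omega(0,H)$. By Proposition \ref{prop:diff_op_orb_int_generic}, and by Proposition \ref{prop:inv_diff_op_isomorphic} for matching operators, both orbital integrals satisfy $D F=0$, where $D=(\omega(0,H))\vert_\mcQ$ or the analogous restriction. I would compute this restriction explicitly on $\mcA=\mbC[\det y, wv, wyv]$. For the second factor it is, up to the $Q_1$ Fourier/multiplication interplay, a second-order operator whose principal part involves $\partial_\beta$ with a coefficient nonvanishing generically along $\beta=0$. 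I expect the result to have the shape of a Cauchy--Kovalevskaya-type equation, where $\beta=0$ is non-characteristic. An analytic solution is then determined by its restriction to $\beta=0$, possibly together with the normal derivative. If the normal derivative is also needed, I would supply it with a second weight operator or with $\omega(L)$/$\omega(R)$ relations, or compute it directly in step one. The identity then follows on each connected component of $\mcQ(\mbR)_{\rs,(1,1)}$ that meets $\{\beta=0\}$; components not meeting it would need separate treatment or a connectedness argument.

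The third step, which I expect to be the main obstacle, is to justify real-analyticity of both orbital integrals in the $\beta$-direction near $\beta=0$, or at least to produce convergent power series in $\beta$ locally. For $\psi_{1,1}$, the integrand is an entire Gaussian in the invariants, and the orbit parametrization depends analytically on them. Differentiating under the integral gives derivative bounds of factorial growth, which is elementary. For $\Phi_{1,1}$, I would expand the orbital integral's integrand in $\beta$. The $\beta$-derivatives become finite sums of Gaussian moment integrals, and the goal is to bound the $k$-th coefficient by $C^k k!$. The number of terms makes this messy, so I would organize the expansion symbolically, as the authors do with a computer check. Alternatively, I would show that the formal series solution of the PDE with the given boundary data is unique, and that both integrals admit asymptotic expansions agreeing with it, which suffices for uniqueness in the analytic class.
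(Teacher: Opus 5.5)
Your architecture matches the paper's: vanishing off signature $(1,1)$ via Lemma~\ref{lem:transposition}, explicit matching on $\{\beta=0\}$, propagation with $\omega(0,H)$, then analyticity. However, the propagation step rests on a false expectation. The restricted operator is
\[
4\pi\,\omega(0,H)\vert_{\mcQ} = 16\pi^2\beta - 2\partial_\beta - \beta\partial_\beta^2 + \alpha\beta\partial_\gamma^2 - 2\gamma\partial_\beta\partial_\gamma .
\]
The coefficient of $\partial_\beta^2$ vanishes identically on $\{\beta=0\}$, so this hyperplane is \emph{characteristic} and Cauchy--Kovalevskaya does not apply. Concretely, let $f$ be a solution and write $f^{(k)}=\partial_\beta^k f\vert_{\beta=0}$. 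Applying $\partial_\beta^{k-1}$ to the equation and restricting gives
\[
(k+1)f^{(k)} + 2\gamma\,\partial_\gamma f^{(k)} = (k-1)\big(16\pi^2+\alpha\partial_\gamma^2\big)f^{(k-2)} .
\]
So each Taylor coefficient is fixed by the lower ones only modulo an arbitrary Euler-type term $c(\alpha)\,|\gamma|^{-(k+1)/2}$. Hence none of your options determines the solution: not the boundary values, not the boundary values plus the normal derivative, and not ``uniqueness of the formal series''. The missing ingredient is a growth condition in $\gamma$. The paper shows that every $\partial_\beta^k\Orb(-,\Phi_{1,1})$ and $\partial_\beta^k\Orb(-,\psi_{1,1})$ decays rapidly as $|\gamma|\to\infty$. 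It gets this from Xue's Lemma~5.5 applied to $\gamma^k\Phi$, together with Lemma~\ref{lem:restriction_surjective}, which realizes $\Delta^m\partial_\beta^k$ as the restriction of an invariant operator. This decay is exactly what kills the homogeneous terms $|\gamma|^{-(k+1)/2}$ in the induction.

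Your analyticity step is also insufficient as planned. Vanishing of the Taylor series at $\beta=0$ only gives vanishing near $\beta=0$. To reach all of $\mcQ_{\rs,(1,1)}=\{\Delta>0\}$, you need analyticity in $\beta$ on the whole interval $\{\beta:\alpha\beta^2+\gamma^2>0\}$ for each fixed $\alpha\gamma\neq0$, which is what the paper proves. For $\psi_{1,1}$ your approach is fine in principle. The paper complexifies $\beta$ in a parametrization where it enters only through $\sqrt{\Delta}$, whose real part stays bounded below. For $\Phi_{1,1}$, however, your proposed $C^k k!$ bounds are unsubstantiated, and the natural route breaks down. In the Iwasawa integral, $\beta$ enters through $(t_1^{-1}\beta-t_2^{-1}b\gamma)^2$. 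For $\mathrm{Im}\,\beta\neq0$ this produces a factor $e^{2\pi(\mathrm{Im}\,\beta)^2t_1^{-2}}$, which the remaining terms do not dominate as $t_1\to0$ with $t_2/t_1$ large. The paper instead proves analyticity via H\"ormander's analytic regularity theorem (Theorem~\ref{thm:reg_thm_diff_eqn}). It exhibits four $\GL_2$-invariant operators annihilating $\Orb(-,\Phi_{1,1})$: the two weight operators and two third-order operators found by computer search. Their restricted principal symbols have no common nonzero real zero on $\{\Delta>0,\ \alpha\gamma\neq0\}$. Some overdetermined-elliptic argument of this kind, or a genuinely new estimate, is needed to close this step.
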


As an immediate corollary, we conclude:
\begin{theorem}\label{thm:main_direct_sum}
Conjectures \ref{conj:polynomial_Schwartz_SL} and \ref{conj:cyclic_Orb}  hold when $n = 2$.
\end{theorem}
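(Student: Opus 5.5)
Theorem \ref{thm:main_direct_sum} follows by assembling the generator results of Part II with the explicit transfer identities, using that transfer is compatible with the $\mfA$-action. Throughout we fix Haar measures as in Theorem \ref{thm:pos_def} for the compact groups and as in Theorem \ref{thm:main_signature_11} for $U(V_{1,1})$, and rescale $ψ_{p,q}$ by constants where needed.

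First, we record the three basic transfer identities. By \eqref{eq:Orb_20} and the discussion following it, $Φ_{2,0}$ agrees up to a nonzero constant with the function from \cite[Example 4.17]{MSY}. Hence Theorem \ref{thm:pos_def} gives $\Orb(-,Φ_{2,0}) = c_{2,0}\Orb(-,ψ_{2,0})$, where $\Orb(-,ψ_{2,0})$ is extended by zero outside the signature $(2,0)$ locus. Likewise \eqref{eq:Orb_02} and the sign switch give $\Orb(-,Φ_{0,2}) = c_{0,2}\Orb(-,ψ_{0,2})$. Theorem \ref{thm:main_signature_11} gives $\Orb(-,Φ_{1,1}) = \sqrt2\,\Orb(-,ψ_{1,1})$. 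In each case the tuple consisting of the relevant $ψ_{p,q}$, with zeros in the other two signatures, is a transfer of the corresponding $Φ_{p,q}$.

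Next, we prove the inclusion $\subseteq$ in Conjecture \ref{conj:polynomial_Schwartz_SL}. By Corollary \ref{cor:main_S_2}, every element of $\Orb(-,\mcS_2)$ has the form
\[
\sum_{p+q=2} (\mcD_{p,q}|_\mcQ)\Orb(-,Φ_{p,q}), \qquad \mcD_{p,q}\in\mfA.
\]
By \eqref{eq:compatib_transfer_mfA}, the term $(\mcD_{p,q}|_\mcQ)\Orb(-,Φ_{p,q})$ equals $c_{p,q}\Orb(-,\mcD_{p,q}ψ_{p,q})$. Since $\mcD_{p,q}ψ_{p,q}\in\mcT_{p,q}$, this term lies in $\Orb(-,\mcT_{p,q})$.

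For the inclusion $\supseteq$, Example \ref{ex:cylic_T_definite} and Corollary \ref{cor:main_T_11} show that each $\Orb(-,\mcT_{p,q})$ is generated as an $\mfA$-module by $\Orb(-,ψ_{p,q})$; this is exactly Conjecture \ref{conj:cyclic_Orb}(1). So an arbitrary element of $\Orb(-,\mcT_{p,q})$ is $\Orb(-,\mcD ψ_{p,q})$ for some $\mcD\in\mfA$. This equals $c_{p,q}^{-1}\Orb(-,\mcD Φ_{p,q})$, which lies in $\Orb(-,\mcS_2)$.

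The sum $\sum_{p+q=2}\Orb(-,\mcT_{p,q})$ is direct because the three summands are supported on the pairwise disjoint open and closed pieces of the matching bijection \eqref{eq:matching_bijection}. Conjecture \ref{conj:cyclic_Orb}(2) follows at the same time: each $ψ_{p,q}$ has transfer $c_{p,q}^{-1}Φ_{p,q}$, and by Corollary \ref{cor:main_S_2} these three functions generate $\Orb(-,\mcS_2)$ as an $\mfA$-module.

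The only genuinely hard input is Theorem \ref{thm:main_signature_11}, which is taken as given here. Beyond that, the one point to check is that \eqref{eq:compatib_transfer_mfA} applies to transfers in which only a single signature component is nonzero. It does: Proposition \ref{prop:inv_diff_op_isomorphic} is stated for arbitrary tuples $(Ψ_{p,q})_{p+q=n}$, so tuples with zero entries are allowed.
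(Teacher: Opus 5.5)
Your proposal is correct and follows the paper's argument. It combines the generator statements (Corollary~\ref{cor:main_S_2} on the $\GL_2$-side, Example~\ref{ex:cylic_T_definite} and Corollary~\ref{cor:main_T_11} on the unitary side) with the three generator transfers from Theorems~\ref{thm:pos_def} and~\ref{thm:main_signature_11}, propagated via \eqref{eq:compatib_transfer_mfA}; you simply make explicit the two inclusions, the directness of the sum, and the harmless nonzero constants that the paper's short proof leaves implicit.
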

\begin{proof}
As in the proof of \Cref{conj:cyclic_coinvariants}, cf.\ \Cref{cor:cyclic coinv n=2}, we have explicit elements $\Phi_{2,0}$, $\Phi_{1,1}$, $\Phi_{0,2}$ generating the coinvariant space $\overline{\mathcal S_2}$. By Theorems \ref{thm:pos_def} and \ref{thm:main_signature_11}, $Φ_{p,q}$ and $ψ_{p,q}$ are mutual transfers (up to constant).
These assertions establish Conjecture \ref{conj:cyclic_Orb} for $n=2$.
Since transfer is compatible with $\mfA$-actions in the sense of \eqref{eq:compatib_transfer_mfA}, the three explicit transfer identities extend to the identity
$$\Orb(-,\mcS_2) = \bigoplus_{p+q = 2} \Orb(-,\mcT_{p,q}),$$
which proves Conjecture \ref{conj:polynomial_Schwartz_SL} when $n = 2$ as desired.
\end{proof}

It is hence left to prove the signature $(1,1)$ transfer identity which is the content of the rest of this article. Our proof proceeds in two steps. First, in the next section, we show that $\Orb(-,\Phi_{1,1})$ and $\Orb(-,\psi_{1,1})$ coincide (up to normalizing constant) on a hyperplane $\{\beta = 0\}$ in $\calQ_\rs(\bbR)$. Next, we show that these orbital integrals are characterized as the unique solutions to a differential equation given by the weight operator $ω(0,H)$; the restriction $\{ \beta =0\}$ serves as a boundary condition in this differential equation, forcing the orbital integrals to be equal everywhere.

\section{Matching along $\beta = 0$}
\label{s:matching_beta_0}
\subsection{Description of $\mcQ(\mbR)_\rs$ by signature}
\label{ss:geometry_Q}
We continue to use the natural coordinates as in \eqref{eq:nat_coords}. The quotient variety $\mcQ$ is identified with $\mbA^3$ via \eqref{eq:invariant_functions_SL} and we denote its three coordinates by $α$, $β$ and $γ$. The quotient map is given by
$$\mr{inv}:(y, v, w)\ \longmapsto\ (\det(y), wv, wyv).$$
We define the \emph{discriminant} of $(α, β, γ)\in \mcQ(\mbR)$ by
$$Δ(α, β, γ) := αβ^2 + γ^2.$$
We denote by $\mcQ_{\rs,(p,q)}\subset \mcQ(\mbR)$ the subset of those regular semi-simple invariants that come from $(\mfsu(V_{p,q})\times V_{p,q})(\mbR)$.

\begin{lemma}\label{lem:Q11_connected}
An invariant $(α, β, γ) \in \mcQ(\mbR)$ is regular semi-simple if and only if $Δ(α, β, γ) \neq 0$. It lies in $\mcQ_{\rs, (1,1)}$ if and only if $Δ(α, β, γ) > 0$.
\end{lemma}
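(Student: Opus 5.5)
The plan is to handle the two claims separately, working on the $\GL_2$-side $\mfs_2$ with invariants $(α,β,γ) = (\det y, wv, wyv)$.

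\emph{Regular semi-simplicity.} By the criterion recalled earlier, $(y,v,w)$ is regular semi-simple if and only if $\mbR[y]v = \mbR^2$ and $w\mbR[y] = \mbR_2$, i.e. $v, yv$ are linearly independent and $w, wy$ are linearly independent. Since $\det y = α$ and $\tr y = 0$, Cayley--Hamilton gives $y^2 = -α$. The Gram-type matrix
$$M := \begin{pmatrix} w \\ wy\end{pmatrix}\begin{pmatrix} v & yv\end{pmatrix} = \begin{pmatrix} β & γ\\ γ & -αβ\end{pmatrix}$$
then satisfies $\det M = -(αβ^2+γ^2) = -Δ$. So $Δ\neq 0$ holds if and only if both $2\times 2$ factors are invertible, which is exactly regular semi-simplicity.

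To pass from elements to invariants, I will use the section $σ$ from the proof of Lemma \ref{lem:restriction_surjective}. This shows that every invariant is attained over $\mbR$. Moreover $Δ$ is a function of the invariants only. Hence an invariant is regular semi-simple precisely when $Δ\neq 0$.

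\emph{Signature.} I will compute directly on the unitary side. Take $(x,u)\in \mfsu(V)\times V$. Put $x = i h$, with $h$ self-adjoint for the hermitian form $(\ ,\ )$ and of trace zero. Then the invariants are
$$β = (u,u),\qquad γ = (u,hu),\qquad α = \det x = -\det h,$$
and $h^2 = -α$.

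Next I form the hermitian Gram matrix of the vectors $u, hu$:
$$G = \begin{pmatrix} (u,u) & (u,hu)\\ (hu,u) & (hu,hu)\end{pmatrix} = \begin{pmatrix} β & γ\\ γ & -αβ\end{pmatrix},$$
using $(hu,hu) = (u,h^2u) = -αβ$. In the regular semi-simple case $u, hu$ form a basis of $V$. The signature of $V$ is then the signature of $G$, and $\det G = -Δ$. Thus $V$ is indefinite, i.e. of signature $(1,1)$, if and only if $\det G<0$, that is, $Δ>0$. For $Δ<0$ the form is definite, and the sign of $β$ distinguishes $(2,0)$ from $(0,2)$.

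By the matching bijection \eqref{eq:matching_bijection}, every regular semi-simple invariant arises from exactly one signature. Therefore $\mcQ_{\rs,(1,1)} = \{Δ>0\}$.

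\emph{Main obstacle.} The only subtle point is the existence direction: every invariant with $Δ>0$ should actually occur for some $(x,u)$ in signature $(1,1)$. The matching bijection handles this, since the invariant is realized on the $\GL_2$-side and hence on exactly one unitary side, which must be $(1,1)$.

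If a direct argument is preferred, I would construct one. Choose a $2$-dimensional hermitian space with basis $u, u'$ whose Gram matrix is $G$. Define $h$ by $hu = u'$ and $hu' = -αu$. A short check shows $h$ is self-adjoint with trace $0$, and that $(u,hu)=γ$ as required.
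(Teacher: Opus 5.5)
Your proof is correct, but it proves the signature statement by a different route from the paper's.

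\textbf{Regular semi-simplicity.} Here you do essentially what the paper does. The paper checks the representative \eqref{eq:rep_for_abc}, where $\{w, wy\}$ is automatically a basis and $\det(v,yv) = -\Delta$. Your factorization $\det M = \det\begin{pmatrix} w \\ wy\end{pmatrix}\det(v, yv) = -\Delta$ is the same computation without choosing a representative.

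\textbf{Signature.} The paper stays on the $\GL_2$-side. It puts elements into the normal form \eqref{eq:lambda_mu_form}, which is possible because $\Delta<0$ forces $\alpha<0$, so $y$ has real eigenvalues. It then reads off the matching signature from the signs of $\mu_1,\mu_2$, as in the proof of Lemma \ref{lem:transposition}, and computes $\Delta = -4\lambda^2\mu_1\mu_2$. You work on the unitary side instead. The Gram matrix of the basis $u, hu$ is the same matrix $\begin{pmatrix}\beta & \gamma\\ \gamma & -\alpha\beta\end{pmatrix}$ that appears on the $\GL_2$-side, so $\det G = -\Delta$ detects the signature of $V$ directly.

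Your route has several advantages:
\begin{itemize}
\item It needs no diagonalization or eigenvalue case split.
\item It does not depend on the paper's description of the matching signature via the signs of $v_iw_i$, which is only sketched there.
\item It also shows that the sign of $\beta$ separates $(2,0)$ from $(0,2)$.
\item Your explicit construction of $h$ (with $hu = u'$ and $hu' = -\alpha u$) makes the appeal to the matching bijection unnecessary.
\end{itemize}
The paper's route has the advantage of reusing the normal form it already set up for the transposition lemma.

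\textbf{A sign slip to fix.} The first unitary invariant is $\tr(\wedge^2 x)/i^2 = -\det x = \det h$, not $\det x = -\det h$. With $\alpha = -\det h$ as you wrote it, Cayley--Hamilton for the trace-zero $h$ would give $h^2 = \alpha$. That contradicts the relation $h^2 = -\alpha$ you use next. Since $h^2 = -\alpha$ is the correct relation, your Gram matrix and your conclusion are unaffected.
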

\begin{proof}
Consider an invariant $(α, β, γ) \in \mcQ(\mbR)$. It has representative
\begin{equation}\label{eq:rep_for_abc}
\begin{pmatrix}
y & v\\ w &
\end{pmatrix} = \begin{pmatrix}
& 1 & β\\
-α & & γ\\
1 & &
\end{pmatrix} \in \mfgl_3(\mbR).
\end{equation}
The triple $(y, v, w)$ is regular semi-simple if and only if $\{w, wy\}$ spans $\mbR_2$ and $\{v, yv\}$ spans $\mbR^2$. The first always holds, the second if and only if $Δ = αβ^2 + γ^2\neq 0$.

We next claim that $Δ(\inv(y, v, w)) < 0$ if and only if $(y,v,w)$ is regular semi-simple and matches to signature $(2,0)$ or $(0,2)$. For the 'if' direction, we may proceed as around \eqref{eq:block_diag_general} and assume that $(y,v,w)$ is of the form
\begin{equation}\label{eq:lambda_mu_form}
\begin{pmatrix}
λ & & μ_1\\
& -λ & μ_2\\
1 & 1 &
\end{pmatrix}
\end{equation}
with $μ_1 > 0$ and $μ_2 > 0$ (signature $(2,0)$ case) or $μ_1 < 0$ and $μ_2 < 0$ (signature $(0,2)$ case). We find that
\begin{equation}\label{eq:ineq_lambda_mu}
Δ(\inv(y,v,w)) = -λ^2(μ_1 + μ_2)^2 + λ^2(μ_1-μ_2)^2 = -4λ^2μ_1μ_2 < 0.
\end{equation}
For the converse direction, assume that $Δ(\inv(y,v,w)) < 0$. This is only possible if $α = \det(y) < 0$. Hence $y$ has two real eigenvalues, so we may again assume that $(y,v,w)$ is of the form shown in \eqref{eq:lambda_mu_form}. Then \eqref{eq:ineq_lambda_mu} shows that $μ_1$ and $μ_2$ have the same sign, meaning that \eqref{eq:lambda_mu_form} comes from signature $(2,0)$ or $(0,2)$. This proves the claim and it follows that $\mcQ_{\rs, (1,1)} = \{Δ > 0\}$.
\end{proof}

\subsection{The function $\Orb(-,\Phi_{1,1})$}
We begin by explicitly writing out the relevant orbital integral on the $GL_2$ side.
Given $(\alpha,\beta,\gamma) \in \mathcal Q(\bbR)$, let
\[ Y  = Y(\alpha,\beta,\gamma)= \begin{pmatrix} y & v \\ w &  \end{pmatrix} \]
be given by \eqref{eq:rep_for_abc}. In particular, for a Schwartz function $\Phi \in \mathcal S_2$,  and a choice of Haar measure $dg$, the orbital integral is given by
\[
\Orb(\Phi)(\alpha,\beta,\gamma)  = \tilde \eta(Y) \int_{\GL_2(\bbR)}  \Phi(g^{-1} \cdot Y) \eta(g) dg = \tilde \eta(Y) \int_{\GL_2(\bbR)} \Phi \left( g^{-1} y g, \, g^{-1} v, wg \right) \eta(g) dg.
\]
with $Y= Y(\alpha,\beta,\gamma)$; here the transfer factor $\tilde \eta(Y) $ is given by  \eqref{eq:transfer_factor}.

Note that by \Cref{lem:Q11_connected}, we have
\begin{equation} \label{eqn:transfer factor gl2 explicit}
\tilde \eta (Y) = \mathrm{sgn} \det \begin{pmatrix} \beta & \gamma \\ \gamma & - \alpha \beta \end{pmatrix} =  \mathrm{sgn}( - \Delta) = \begin{cases} 1, & (\alpha, \beta, \gamma) \in \calQ_{rs, (2,0)} \text{ or } \calQ_{rs, (0,2)}  \\
-1, & (\alpha,\beta,\gamma) \in \calQ_{rs, (1,1)}
\end{cases}
\end{equation}
where $Y = Y(\alpha,\beta,\gamma)$.

Now we specialize to the relevant Schwartz function $\Phi_{1,1} \in \mathcal S_2$
appearing in \eqref{eq:def_Phi_11}. Concretely, given a general element $ Y = (y,v,w) \in \lie{s_2}$ with
\[
y = \begin{pmatrix} y_{11} & y_{12} \\ y_{21} & - y_{11} \end{pmatrix}, \qquad v = \begin{pmatrix} v_1 \\ v_2 \end{pmatrix} , \qquad w = ( w_1, w_2),
\]
we have
\[
\Phi_{1,1}(y,v,w) =   (y_{12}- y_{21})\exp \left( - 2 \pi \left[ 2 y_{11}^2 + y_{12}^2 + y_{21}^2 + v_1^2 + v_2^2 + w_1^2 + w_2^2  \right] \right).
\]
To make the orbital integral explicit, we employ the decomposition
\[
\GL_2(\mathbb R) = A N K
\]
where
\[
A = \left\{ a(t_1, t_2) = \begin{pmatrix} t_1 & \\ & t_2 \end{pmatrix} \  | \ t_1, t_2 > 0 \right\}, \qquad N = \left\{ n(b) = \begin{pmatrix} 1 & b \\ & 1 \end{pmatrix} \right\}, \qquad K = O(2).
\]
The Haar measure $dg$ decomposes as
\[
dg = \frac{dt_1}{t_1} \frac{dt_2}{t_2} \, db \, dk
\]
where we normalize $dk$ to have volume 1. Noting that $\Phi_{1,1}$ is $(O(2), \eta)$-equivariant, a short computation gives
\begin{align}
&\Orb(\Phi_{1,1}) (\alpha,\beta,\gamma)  \notag\\
& \ =   (-1)^p \int_A \int_N \int_K \Phi_{1,1}( k^{-1}n(-b) a(t_1^{-1}, t_2^{-1} )\cdot Y) \eta(k)  dk db \frac{dt_1}{t_1} \frac{dt_2}{t_2} \notag  \\
& \ = (-1)^p \int_{0}^{\infty} \int_{ 0}^{\infty} \int_{ \mathbb R} \left\{  \frac{t_1}{t_2}\alpha (1+b^2) + \frac{t_2}{t_1} \right\} \notag \\
&\qquad  	\times \exp\left( - 2\pi \left[  2 \frac{t_1^2}{t_2^2} b^2 \alpha^2 + \left( \frac{t_1}{t_2} b^2 \alpha + \frac{t_2}{t_1} \right)^2  + \frac{t_1^2}{t_2^2}\alpha^2 + t_1^2(1+b^2) + (t_1^{-1} \beta - t_2^{-1} b \gamma)^2 + t_2^{-2} \gamma^2 \right] \right) \notag  \\
& \qquad \qquad db \, \frac{dt_1}{t_1} \frac{dt_2}{t_2} \label{eqn:Phi11 explicit general}
\end{align}
where $(\alpha,\beta,\gamma) \in \calQ_{rs, (p,q)}$ and $Y = Y(\alpha,\beta,\gamma)$ in the first line.

Specializing to the locus $\{ \beta = 0 \} \cap \calQ_\rs(\bbR) \subset \calQ_{rs, (1,1)}$ simplifies things considerably: setting $\beta =0$ and applying the change of variables  $t_2 \mapsto t_1 t_2$ and $t_1 \mapsto (1+b^2)^{-\frac12} t_1$, and rearranging gives
\begin{align*}
&\Orb(\Phi_{1,1})(\alpha, 0, \gamma) \\
& \ = -  \int \left\{  t_2^{-2} \alpha(1+b^2) + 1    \right\}   \exp \left( -2 \pi \left[ t_2^{-2}    (1+b^2)^2 (\alpha^2 + t_1^{-2}\gamma^2)   + t_2^2 + 2 b^2 \alpha  + t_1^2  \right] \right) \frac{dt_1}{t_1} dt_2 db.
\end{align*}

In general if $A,B>0$ then
\begin{equation} \label{eqn:Gaussian master thm}
\int_0^{\infty} e^{- 2\pi (Au^2 + Bu^{-2})}du = \frac{1}{2\sqrt{2A}} e^{-4 \pi \sqrt{AB}}, \qquad \int_0^{\infty} u^{-2} e^{-2\pi(Au^2 + B u^{-2})} du = \frac1{2 \sqrt{2B}} e^{-4 \pi \sqrt{AB}}
\end{equation}
We apply this to integrate over $t_2$  with $A = 1, B = (1+b^2)^2(\alpha^2 + t_1^{-2} \gamma^2)$, and obtain
\begin{align*}
-& 2 \sqrt 2\Orb(\Phi_{1,1})(\alpha, 0, \gamma) \\
& =  \int_{0}^{\infty} \int_{ \mathbb R}  \left\{1 + \frac{\alpha}{ \sqrt{\alpha^2 + t_1^{-2} \gamma^2 }}    \right\} \, \exp \left( -2 \pi \left[2 (1+b^2) (\alpha^2 + t_1^{-2}\gamma^2)^{1/2} + t_1^{2} + 2 b^2\alpha \right] \right)db \,  \frac{dt_1}{t_1}.
\end{align*}
Now apply the substitution $u = \sqrt{\alpha^2 + t_1^{-2} \gamma^2}$, so that
\[
du = - u^{-1} (u^2 - \alpha^2) \frac{dt_1}{t_1}.
\]
We obtain
\begin{align*}
-2 \sqrt 2 & \Orb(\Phi_{1,1})(\alpha,0,\gamma)  = \int_{|\alpha|}^{\infty} \int_{\bbR} \frac{1}{u-\alpha} \exp \left( - 2 \pi \left[ 2 (1+b^2) u + \gamma^2(u^2-\alpha^2)^{-1} + 2b^2 \alpha \right] \right) db \, du.
\end{align*}
Now the integral on $b$ is a simple Gaussian integral: evaluating it yields
\begin{equation} \label{eqn:GL2 orb b=0 final}
-4 \sqrt 2\Orb(\Phi_{1,1} ) (\alpha, 0, \gamma) =  \int_{|\alpha|}^{\infty} \frac{1}{(u-\alpha)(u+\alpha)^{\frac12}} \exp \left( - 2 \pi \left[ 2u + \gamma^2(u^2 - \alpha^2)^{-1} \right] \right) \, du.
\end{equation}

\subsection{The function $\Orb(-,\psi_{1,1})$}
Turning to the unitary side, let $V = V_{1,1}$ denote the complex Hermitian space of signature $(1,1)$, with Hermitian form
$\langle \cdot, \cdot \rangle$. It will be convenient to fix a hyperbolic basis $\{ e', f'\}$
where
\[
\langle e', e' \rangle = \langle f', f' \rangle = 0, \qquad \langle e', f' \rangle = -i.
\]
With respect to this basis, we have
\[
\lie{su}(V) = \lie{sl}_2(\bbR) = \left\{  \begin{pmatrix} a & b \\ c & -a \end{pmatrix} \ | \ a,b,c \in \mathbb R\right\}.
\]
The Siegel Gaussian $\psi_{1,1} \in \mathcal T_{1,1}$ then has the explicit expression
\begin{equation}  \label{eqn:psi11 hyberbolic}
\psi_{1,1}(x,u) = \exp\left( - 2 \pi \left[2 x_{11}^2 + x_{12}^2 + x_{21}^2  + 2|u_1|^2 + 2 |u_2|^2 \right]  \right)
\end{equation}
where we write
\[
x = \begin{pmatrix} x_{11} & x_{12} \\ x_{21} & -x_{11} \end{pmatrix} \in \lie{su}(V), \qquad  u = \begin{pmatrix} u_1 \\ u_2 \end{pmatrix} \in V
\]
with respect to the basis $\{ e', f'\}$.

Note that on the one hand, we have $\psi_{1,1}(x,u) = \psi_{1,1}(-x,u)$. On the other hand, if the invariants of $(x,u)$ are $(\alpha,\beta,\gamma)$ respectively, then the invariants of $(-x,u)$ are $(\alpha, \beta, -\gamma)$. In particular, we have
\[
\Orb(\psi_{1,1})(\alpha, \beta, \gamma) = \Orb(\psi_{1,1})(\alpha, \beta, -\gamma).
\]
and we may therefore assume without loss of generality that $\gamma \geq 0$ in the sequel.

Now suppose we are given a triple $(\alpha, \beta, \gamma) \in \calQ_{rs, (1,1)}$ of invariants, so that$\Delta := \alpha\beta^2 + \gamma^2>0$, cf.\ \Cref{lem:Q11_connected}, and assume further that $\gamma>0$. We define an element $X = X(\alpha,\beta,\gamma) \in \lie{su}(V) \oplus V$ by the formula
\begin{equation} \label{eqn:U11 section}
X :=  (x,u) = \left(\begin{pmatrix} 0 & x_2 \\ x_3 & 0 \end{pmatrix} , \  \begin{pmatrix} -i \beta / 2  \\ 1 \end{pmatrix} \right) \ \in \ \lie{su}(V) \oplus  V
\end{equation}
where
\begin{equation} \label{eqn:x2 x3 def}
x_2 :=  - \frac12 (\gamma + \sqrt{\Delta}), \qquad  x_3 := -2 \alpha/( \gamma + \sqrt{\Delta}).
\end{equation}
Specializing \eqref{eq:invariant_functions_SU} to the case at hand yields the three invariant functions $\left( - \det x, \langle u, u \rangle, -i \langle u, xu \rangle \right)$, and applying these invariants to $X = (x,u)$ as defined above recovers the triple $(\alpha,\beta,\gamma)$; in other words, \eqref{eqn:U11 section} is a section of the quotient map $\lie{su}(V) \oplus V \to \mathcal Q_{(1,1)}$ defined on the locus $\{ \gamma >0 \} \cap \calQ_{rs,(1,1)}$.

We can also make the orbital integral more explicit by considering the decomposition
\[
U(V)(\mathbb R) = A N K
\]
where (in terms of the basis $\{ e', f'\}$ as above), we set
\[
A := \left\{ a(t) =\begin{pmatrix}t & \\ & t^{-1} \end{pmatrix} \ | \ t \in \mathbb R, t>0 \right\}, \qquad N = \left\{ n(b) = \begin{pmatrix} 1 & b \\ & 1 \end{pmatrix} \ | \ b \in \mathbb R \right\}
\]
and
\[ K = \left\{  e^{i \theta} \begin{pmatrix}  \cos \phi & \sin \phi \\ - \sin \phi & \cos \phi \end{pmatrix}  \right\} \simeq U(1) \times U(1).	\]
The corresponding decomposition of measures is given by $dg = \frac{dt}t db dk$, where we normalize $dg$ such that $dk$ is the Haar measure on $K$ of total volume 1.

A brief calculation then shows that for $\gamma>0$, we have the explicit expression
\begin{align}
&\Orb(\psi_{1,1})(\alpha,\beta,\gamma) \notag \\
& \ = \int_{A} \int_{N} \int_K \psi_{1,1} \left(  k^{-1} n(-b) a(t^{-1}) \cdot X \right) \, dk \,  db \, \frac{dt}{t}  \notag \\
&\ = \int_0^{\infty} \int_{\mathbb R} 		 e^{ -2\pi [ 2 b^2 t^4 x_3^2 + (b^2 t^2 x_3 -t^{-2} x_2)^2  + t^4 x_3^2 + 2( t^2 b^2 + t^{-2} \beta^2/4 + t^2) ] }  db  \frac{dt}t  . \label{eqn: unitary int}
\end{align}

For future purposes, it will be  useful to further restrict to the case $\alpha \neq 0, \gamma >0$.
On this region, the functions $x_2$ and $x_3$ defined in \eqref{eqn:x2 x3 def} are everywhere non-zero. More precisely, if $\alpha>0$ then $x_2, x_3 <0$ and if $\alpha <0$, then $x_3> 0 > x_2$.

Rearranging the terms in the exponent in \eqref{eqn: unitary int} above and applying the substitution $u = 1+b^2$, we find
\begin{align*}
\Orb&(\psi_{1,1})(\alpha,\beta,\gamma)   \\
&= \int_0^{\infty} \int_{\mathbb R} 		 \exp \left(-2\pi [  t^4 x_3^2( 1+ b^2)^2 + 2 t^2(b^2 + 1)   + t^{-2}\beta^2 / 2 + t^{-4} x_2^2 -2 b^2 \alpha ] \right)  \, db \, \frac{dt}t  \\
&=     \int_0^{\infty} \int_{1}^{\infty} 		 \exp \left(-2\pi [  t^4 x_3^2 u^2 + 2 t^2u   + t^{-2}\beta^2 / 2 + t^{-4} x_2^2 -2 u \alpha + 2 \alpha ] \right)  \,\frac{du}{\sqrt{u-1}}\, \frac{dt}t  \\
&=   \int_0^{\infty} \int_{1}^{\infty} 		 \exp \left(-2\pi [ ( t^2 x_3 u -t^{-2} x_2)^2 + 2 t^2u   + t^{-2}\beta^2 / 2   + 2 \alpha ] \right)  \,\frac{du}{\sqrt{u-1}}\, \frac{dt}t  .
\end{align*}
Next we apply the substitution
\[ t \mapsto t \left| \frac{x_2}{x_3} \right|^{\frac14} = t \frac{(\gamma + \sqrt{\Delta})^{\frac12}}{\sqrt 2|\alpha|^{\frac14}} \]
to obtain
\begin{equation}
\Orb(\psi_{1,1})(\alpha,\beta,\gamma) =     \int_0^{\infty} \int_{1}^{\infty} e^{- 2 \pi h(t,u)}  \,\frac{du}{\sqrt{u-1}}\, \frac{dt}t \label{eqn:unitary orb int again}
\end{equation}
where
\begin{equation} \label{eqn:U11 orb int exponent}
h(t,u) = h_{\alpha,\beta,\gamma}(t,u) :=  |\alpha|( t^2  u - \mathrm{sgn}(\alpha)t^{-2} )^2 + \frac{t^2u(\gamma+ \sqrt{\Delta})}{|\alpha|^\frac12}   + \frac{t^{-2} \beta^2 |\alpha|^{\frac12}}{\gamma+ \sqrt{\Delta}}   + 2 \alpha.
\end{equation}

\subsection{Matching along $\beta = 0$}
The main result of this section is the following proposition. 
 Key steps in its proof were suggested by Google Gemini.
\begin{proposition} \label{prop:beta = 0 match}
Suppose $\gamma \neq 0$ and $\alpha \in \mathbb R$, so that $(\alpha, 0, \gamma) \in \mathcal Q_{rs,(1,1)}$. With the normalizations of Haar measures given in the previous two subsections, we have
\[
\Orb(\Phi_{1,1})(\alpha, 0, \gamma)  =  - \frac1{\sqrt{2}} \, \Orb(\psi_{1,1})(\alpha, 0, \gamma).
\]
\end{proposition}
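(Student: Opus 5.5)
The plan is to reduce $\Orb(\psi_{1,1})(\alpha,0,\gamma)$ to a one-dimensional integral and compare it with \eqref{eqn:GL2 orb b=0 final}. By that formula, the claim is equivalent to
\[
\Orb(\psi_{1,1})(\alpha,0,\gamma) \;=\; \frac14\int_{|\alpha|}^{\infty} \frac{1}{(U-\alpha)(U+\alpha)^{1/2}}\, e^{-2\pi[2U+\gamma^2(U^2-\alpha^2)^{-1}]}\,dU .
\]
Both sides are even in $\gamma$: the unitary side by the symmetry $x\mapsto -x$ noted above, and the $\GL_2$ side by inspection of \eqref{eqn:GL2 orb b=0 final}. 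So we may assume $\gamma>0$. We first treat $\alpha\neq 0$ and recover $\alpha=0$ at the end by continuity in $\alpha$. This continuity should follow from dominated convergence, since for $\gamma\neq 0$ both integrands are bounded by integrable functions locally uniformly in $\alpha$.

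For $\beta=0$ we have $\sqrt\Delta=\gamma$, hence $x_2=-\gamma$ and $x_3=-\alpha/\gamma$. I would work with the form of \eqref{eqn: unitary int} just before the rescaling of $t$. After putting $s=t^2$, it reads
\[
\Orb(\psi_{1,1})(\alpha,0,\gamma)=\frac12\int_0^\infty\!\!\int_1^\infty \exp\!\Big(-2\pi\Big[\tfrac{\alpha^2}{\gamma^2}s^2u^2+2su+\gamma^2 s^{-2}-2\alpha u+2\alpha\Big]\Big)\frac{du}{\sqrt{u-1}}\frac{ds}{s}.
\]
Next I substitute $v=su$ at fixed $u$, so that $ds/s=dv/v$. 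The exponent becomes
\[
\tfrac{\alpha^2}{\gamma^2}v^2+2v+\gamma^2u^2v^{-2}-2\alpha(u-1).
\]
Only the terms $\gamma^2u^2v^{-2}-2\alpha(u-1)$ involve $u$, and they are quadratic in $u$. So the $u$-integral against $(u-1)^{-1/2}\,du$ is a half-line Gaussian-type integral. This is the first point where the structure $(U-\alpha)^{-1}(U+\alpha)^{-1/2}$ should start to appear.

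The alternative route runs in the reverse direction. In \eqref{eqn:GL2 orb b=0 final} I would set $U^2-\alpha^2=\gamma^2/w^2$ (or something similar) and reintroduce an auxiliary variable using \eqref{eqn:Gaussian master thm} backwards. Concretely, a factor $(U+\alpha)^{-1/2}$, or $e^{-4\pi\sqrt{AB}}/\sqrt{A}$, is rewritten as $\int_0^\infty e^{-2\pi(Ar^2+Br^{-2})}dr$. After Fubini this turns the one-dimensional $\GL_2$ integral into a double integral over a positive quadrant. I would then look for an explicit change of variables identifying it with the $(v,u)$-integral above, including the constant $\frac14$ coming from the factor $\frac12$ in $ds/(2s)$ and the normalizations. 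The signs of $\alpha$ may need separate treatment here, because $\mathrm{sgn}(\alpha)$ enters $h$ and the lower limit $|\alpha|$.

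The main obstacle is this last identification step: finding the correct substitution so that the two double integrals agree termwise. Everything else consists of routine Gaussian evaluations, Fubini (justified by positivity of the integrands), and tracking the sign $\tilde\eta=-1$ already built into \eqref{eqn:GL2 orb b=0 final}. If no direct substitution presents itself, I would try comparing Mellin transforms in $\gamma^2$ of both sides. Each side should then factor into Gamma-type integrals that can be matched explicitly, after which Mellin inversion gives the identity.
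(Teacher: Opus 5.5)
Your preliminary reductions are sound: evenness in $\gamma$, treating $\alpha\neq 0$ and recovering $\alpha=0$ by continuity, and the formulas in the variables $(s,u)$ and then $(v,u)$ are all correct. But the proposal stops exactly where the proof has to happen. You never produce the identification with the one-dimensional $\GL_2$ integral, and you acknowledge this yourself. None of the three routes you sketch closes the gap.

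\emph{First route.} In your $(v,u)$ variables, completing the square gives the exponent $(\alpha v/\gamma-\gamma u/v)^2+2v+2\alpha$. The $u$-integral you propose to do first is then $\int_1^\infty e^{-2\pi(\gamma u/v-\alpha v/\gamma)^2}(u-1)^{-1/2}\,du$, which is a parabolic cylinder function. It is not elementary, and nothing indicates how a factor $(U-\alpha)^{-1}(U+\alpha)^{-1/2}$ would come out of it.

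\emph{Second route.} The ``reverse'' route does not name any change of variables.

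\emph{Mellin fallback.} This does not behave as you expect. The Mellin transform in $\gamma^2$ of the $\GL_2$ side is $\frac{\Gamma(s)}{4(2\pi)^{s}}\int_{|\alpha|}^\infty (U-\alpha)^{s-1}(U+\alpha)^{s-1/2}e^{-4\pi U}\,dU$. For $\alpha\neq 0$ this is a confluent hypergeometric function of $\alpha$, not a product of Gamma factors. Matching the two sides this way would need further special-function identities that you have not supplied.

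The missing idea is a change of variables that separates the double integral; you were essentially one substitution away. Return to $(t,u)$ and rescale $t\mapsto t\,\gamma^{1/2}|\alpha|^{-1/4}$. Your completed square becomes $|\alpha|(t^2u-\mathrm{sgn}(\alpha)t^{-2})^2$, and $2v$ becomes $2\gamma t^2u/|\alpha|^{1/2}$. Now put $A=t^2u+t^{-2}$ and $B=t^2u-t^{-2}$. Then $\frac{dt\,du}{t\sqrt{u-1}}=\frac{dA\,dB}{2\sqrt{A^2-B^2-4}}$ on the region $\{A>\sqrt{B^2+4}\}$. The exponent becomes $\alpha B^2+\frac{\gamma}{\sqrt\alpha}(A+B)+2\alpha$ for $\alpha>0$, and $|\alpha|A^2+\frac{\gamma}{|\alpha|^{1/2}}(A+B)+2\alpha$ for $\alpha<0$. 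In either case it is linear in one variable and quadratic in the other.

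\emph{Case $\alpha>0$.} The $A$-integral equals $K_0\big(2\pi\gamma\sqrt{B^2+4}/\sqrt\alpha\big)=\frac12\int_0^\infty e^{-\pi(v+\gamma^2(B^2+4)/(\alpha v))}\,\frac{dv}{v}$. After Fubini, the $B$-integral is Gaussian. The substitutions $u=\gamma^2/(\alpha v)$ and then $u\mapsto u-\alpha$ give exactly \eqref{eqn:GL2 orb b=0 final}, with the constant producing $-1/\sqrt2$.

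\emph{Case $\alpha<0$.} The $B$-integral over $|B|<\sqrt{A^2-4}$ equals $\pi I_0\big(2\pi\gamma\sqrt{A^2-4}/|\alpha|^{1/2}\big)$. Set $u=\sqrt{A^2-4}$ and write $(u^2+4)^{-1/2}e^{-2\pi\gamma(u^2+4)^{1/2}/|\alpha|^{1/2}}$ as a Gaussian integral in an auxiliary variable, using \eqref{eqn:Gaussian master thm}. Then apply $\int_0^\infty e^{-pu^2}I_0(cu)\,u\,du=\frac{1}{2p}e^{c^2/(4p)}$.

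This is the computation carried out in the paper. Without it, or some equivalent explicit evaluation, the proposal does not establish the identity.
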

\begin{proof}
 We have already observed that $\Orb(\psi_{1,1})$ is invariant under $\gamma \mapsto - \gamma$. Similarly, we note that $\Phi_{1,1}(-y,-v,-w) = - \Phi_{1,1}(y,v,w)$, and if $\inv(y,v,w)= (\alpha,\beta,\gamma) $, then $\inv(-y,-v,-w) =  (\alpha,\beta, -\gamma)$. Since $\tilde \eta(-Y) = - \tilde \eta(Y)$ for any $Y \in \lie{gl}_3(\bbR)$, we conclude that $\Orb(\Phi_{1,1})$ is also invariant under $\gamma \mapsto - \gamma$. Moreover, by continuity, it suffices to show the desired identity holds when $\alpha \neq 0$. We may therefore assume that $\gamma > 0$ and $\alpha \neq 0$ in the remainder of the proof.

Starting with the unitary side, we take $\beta = 0$  (which implies $\sqrt{\Delta } = \gamma$) in \eqref{eqn:unitary orb int again} to obtain
\begin{align*}
  \Orb(\psi_{1,1})(\alpha,0,\gamma) = \int_0^{\infty} \int_{1}^{\infty} 		 \exp \left(-2\pi \left[ |\alpha| ( t^2  u - sgn(\alpha)t^{-2} )^2 + \frac{t^2u(\gamma+ \sqrt{\Delta})}{|\alpha|^\frac12}  + 2 \alpha \right] \right)  \,\frac{du}{\sqrt{u-1}}\, \frac{dt}t.
\end{align*}
Suppose first that $\alpha >0$. We apply the substitution
\[
A = t^2 u + t^{-2}, \qquad B = t^2u - t^{-2},
\]
with
\[
\frac{dA \, dB}{2\sqrt{A^2 - B^2 - 4} }  =\frac{dt\, du}{t\sqrt{u-1}}
\]
to yield
\begin{align*}
2 &\Orb(\psi_{1,1})(\alpha,0,\gamma)  \\ &= \int_{-\infty}^{\infty} \int_{\sqrt{B^2 + 4}}^{\infty} \exp\left( - 2 \pi \left[ \alpha  B^2 +  \frac{\gamma}{\sqrt \alpha } (A+B) + 2 \alpha \right] \right) \frac{dA}{\sqrt{A^2-B^2 - 4}} \, dB \\
&=  \int_{-\infty}^{\infty}    \exp\left( - 2 \pi \left[ \alpha  B^2 +  \frac{\gamma}{\sqrt \alpha } B + 2 \alpha \right] \right)
\left\{  \int_{\sqrt{B^2 + 4}}^{\infty}  e^{- 2 \pi \frac{\gamma}{\sqrt{\alpha} }A} \frac{dA}{\sqrt{A^2-B^2 - 4}}  \right\}  \, dB
\end{align*}
To compute the inner integral, we write
\[
A = \frac{\sqrt{B^2 + 4} }{2}(v + v^{-1}) \implies \frac{dA}{\sqrt{A^2 - B^2 - 4}} = \frac{dv}v
\]
to obtain
\begin{align*}
\int_{\sqrt{B^2 + 4}}^{\infty}  e^{- 2 \pi \frac{\gamma}{\sqrt{\alpha} }A} \frac{dA}{\sqrt{A^2-B^2 - 4}}   &= \int_1^{\infty}e^{- \pi \frac{\gamma \sqrt{B^2 + 4}}{\sqrt \alpha}(v + v^{-1}) } \frac{dv}{v} \\
&= \frac12 \int_0^{\infty}e^{- \pi \frac{\gamma \sqrt{B^2 + 4}}{\sqrt \alpha}(v + v^{-1}) } \frac{dv}{v} \\
&= \frac12 \int_0^{\infty} e^{-  \pi \left( v + \frac{\gamma^2(B^2 + 4)}{\alpha}v^{-1} \right) } \frac{dv}v.
\end{align*}
Substituting, interchanging the order of the $B$ and $v$ integrals, we obtain
\begin{align*}
4 \, \Orb(\psi_{1,1})(\alpha, 0, \gamma) &= \int_0^{\infty} e^{- \pi (v + 4\alpha + 4 \gamma^2/ \alpha v )}  \left\{   \int_{-\infty}^{\infty}e^{- 2 \pi \left[    \left(  \alpha + \gamma^2/2\alpha v \right) B^2 + \frac{\gamma}{\sqrt \alpha} B   \right] } dB \right\} \frac{dv}v \\
&=\int_0^{\infty} e^{-  \pi (\frac{ \gamma^2}{\alpha u} + 4\alpha +4 u )}  \left\{   \int_{-\infty}^{\infty}e^{- 2 \pi \left[    \left(  \alpha +  u/2 \right) B^2 + \frac{\gamma}{\sqrt \alpha} B   \right] } dB \right\} \frac{du}u
\end{align*}
The inner integral is a standard Gaussian integral in $B$, which we evaluate to obtain
\begin{align*}
4  \,  \Orb(\psi_{1,1})(\alpha, 0, \gamma) &=   \int_0^{\infty}\frac1{u(2\alpha + u)^{\frac12}} e^{ - \pi \left( \frac{ \gamma^2}{\alpha u} + 4\alpha +4 u - \frac{\gamma^2 }{2\alpha(u/2+{\alpha})}\right)   } du \\
&= \int_0^{\infty}\frac1{u(2\alpha + u)^{\frac12}} e^{ - \pi \left(  4(\alpha + u) + \frac{2\gamma^2 }{u(u+2{\alpha})}\right)   } du.
\end{align*}
Applying the substition $u \mapsto u - \alpha$ and comparing with \eqref{eqn:GL2 orb b=0 final} yields the result.

Now suppose $\alpha <0$. Taking $A = t^2 u + t^{-2}$ and $ B = t^2u - t^{-2}$ as before, we find
\begin{align*}
2 \, &\Orb(\psi_{1,1})(\alpha,0,\gamma)  \\ &= \int_{-\infty}^{\infty} \int_{\sqrt{B^2 + 4}}^{\infty} \exp\left( - 2 \pi \left[ |\alpha|  A^2 +  \frac{\gamma}{|\alpha|^{\frac12}} (A+B) + 2 \alpha \right] \right) \frac{dA}{\sqrt{A^2-B^2 - 4}} \, dB \\
&= \int_2^{\infty}   \exp\left( - 2 \pi \left[ |\alpha|A^2 + \frac{\gamma}{|\alpha|^{\frac12}} A + 2 \alpha \right] \right) \left\{  \int_{- \sqrt{A^2 - 4 } }^{\sqrt{A^2 -4}} \exp \left( \frac{- 2 \pi \gamma B }{|\alpha|^{\frac12}}   \right) \frac{dB}{\sqrt{A^2 - B^2 - 4}} \right\} dA \\
&= \int_2^{\infty} \exp\left( - 2 \pi \left[ |\alpha|A^2 + \frac{\gamma}{|\alpha|^{\frac12}} A + 2 \alpha \right] \right) \left\{  \int_0^{\pi}  \exp \left( \frac{- 2 \pi \gamma\sqrt{A^2 - 4}}{|\alpha|^{\frac12} } \cos\theta \right) d\theta   \right\} dA \\
&= \pi \int_2^{\infty} \exp\left( - 2 \pi \left[ |\alpha|A^2 + \frac{\gamma}{|\alpha|^{\frac12}} A + 2 \alpha \right] \right) I_0 \left( \frac{ 2 \pi \gamma\sqrt{A^2 - 4}}{|\alpha|^{\frac12} }   \right)   dA
\end{align*}
where we use the integral representation for the Bessel $I$-function, cf.\ \cite[10.32.1]{NIST:DLMF}.

Applying the substitution $u = \sqrt{A^2 - 4}$, we obtain
\begin{align*}
(2  / \pi )&\Orb(\psi_{1,1})(\alpha, 0, \gamma) \\
&= \int_0^{\infty} \exp \left( - 2 \pi \left[ |\alpha|(u^2 + 4) + \frac{\gamma}{|\alpha|^{\frac12}} (u^2 + 4)^{\frac12} + 2 \alpha \right] \right) I_0 \left( \frac{ 2 \pi \gamma u}{|\alpha|^{\frac12}} \right)   \frac{u}{(u^2 + 4)^{\frac12}} du.
\end{align*}
Now we write
\[
\frac{1}{(u^2 + 4)^{\frac12}} \exp \left( - 2 \pi \gamma(u^2 + 4)^{\frac12}/ |\alpha|^{\frac12}  \right) = 2 \sqrt 2 \int_0^{\infty} e^{-2\pi \left[  (u^2 + 4)t^2 + \frac{\gamma^2}{4|\alpha| t^2} \right] } dt.
\]
Substituting this expression and rearranging , we obtain
\[
\frac{1}{\sqrt{2} \, {\pi}} \Orb(\psi_{1,1})(\alpha, 0, \gamma) =  \int_0^{\infty} e^{ - 2 \pi \left( \frac{\gamma^2}{4 |\alpha| t^2}  + 4 t^2 - 2 \alpha \right) } \left\{ \int_0^{\infty} e^{-2 \pi (t^2 + |\alpha|)  u^2} I_0 \left( \frac{ 2 \pi \gamma u}{|\alpha|^{\frac12}} \right)   u  \, du \right\}   \, dt.
\]
By \cite[(10.43.23)]{NIST:DLMF}, we have
\[
\int_0^{\infty} e^{-2 \pi (t^2 + |\alpha|)  u^2} I_0 \left( \frac{ 2 \pi \gamma u}{|\alpha|^{\frac12}} \right)   u  \, du = \frac{1}{4 \pi (t^2+|\alpha|)} \exp \left( \frac{ \pi \gamma^2}{2 |\alpha|(t^2+|\alpha|)} \right)
\]
We therefore have
\begin{align*}
2 \sqrt 2  \, \Orb(\psi_{1,1}) (\alpha, 0, \gamma) = \int_0^{\infty} (t^2 + |\alpha|)^{-1}  \exp \left( - 2 \pi \left[ 4t^2 - 2 \alpha + \frac{\gamma^2}{4|\alpha|} (t^{-2} - (t^2 + |\alpha|)^{-1}) \right] \right)  dt.
\end{align*}
Substituting $u = 2 t^2 + |\alpha|$ and comparing with \eqref{eqn:GL2 orb b=0 final} gives the result.
\end{proof}

\section{Differential equations for orbital integrals}

\subsection{Unique solutions for the differential equation defined by $ω(0,H)$}

We begin by calculating the restriction $ω(0,H)\vert_\mcQ$.

\begin{lemma}\label{lem:restriction_H_2}
The restriction $ω(0,H)\vert_{\mcQ}$ is given by
\begin{equation}\label{eq:H2_restriction}
\frac{1}{4π}\left(16π^2β - 2\partial_β - β \partial_β^2 + αβ\,\partial_γ^2 - 2γ\,\partial_β\partial_γ\right).
\end{equation}
\end{lemma}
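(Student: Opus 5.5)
The plan is to compute $ω(0,H)$ directly as a differential operator on $\mbR^2\times \mbR_2$ in the natural coordinates $(v,w)$. Then apply it to a function of the form $F(α,β,γ)$ with $α=\det y$, $β = wv$, $γ = wyv$, and read off the restriction via Lemma \ref{lem:restrict_diff_op} (equivalently, via the chain rule). The first factor $\mfsl_2$ and the coordinates of $y$ play no role, since $ω(0,\cdot)$ only involves the $(v,w)$-variables.

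\emph{Step 1: express $ω(0,H)$ in the coordinates $(v,w)$.} Write $H = -i\left(\begin{smallmatrix} & 1\\ & \end{smallmatrix}\right) + i\left(\begin{smallmatrix} & \\ 1& \end{smallmatrix}\right)$. By \eqref{eq:Weil_Lie_algebra} applied to $Q_1 = 2wv$, the upper triangular element acts by $2πi Q_1 = 4πi\,β$. For the lower triangular element, I will use the standard coordinates from \eqref{eq:standard_basis_R_2}, namely $v_j = (x'_j - x'_{j+2})/2$ and $w_j = (x'_j + x'_{j+2})/2$, up to the obvious indexing. A one-line computation then gives
\[
\partial_{x'_j}^2 - \partial_{x'_{j+2}}^2 = \partial_{v_j}\partial_{w_j}.
\]
This could also be read off from \eqref{eq:Weil_Lie_algebra_algebraic} using $Q_1^*$. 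Hence the lower triangular element acts by $\frac{i}{4π}\sum_j \partial_{v_j}\partial_{w_j}$, and
\[
ω(0,H) = 4πβ - \frac{1}{4π}\sum_{j=1}^2 \partial_{v_j}\partial_{w_j}.
\]

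\emph{Step 2: apply $\sum_j \partial_{v_j}\partial_{w_j}$ to $F(α,β,γ)$.} Since $α$ does not depend on $(v,w)$, the chain rule gives
\[
\partial_{w_j}F = F_β\, v_j + F_γ\, (yv)_j .
\]
Differentiating again in $v_j$ and summing over $j$ produces the terms
\[
βF_{ββ} + 2γF_{βγ} + (wy^2v)\,F_{γγ} + 2F_β + \tr(y)\,F_γ .
\]
Two identities then simplify this. First, $\tr(y)=0$ on $\mfs_2$. Second, by Cayley--Hamilton, $y^2 = -\det(y)\,I = -α I$ for traceless $y$, so $wy^2v = -αβ$.

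Substituting these into the formula from Step 1 gives exactly \eqref{eq:H2_restriction}. The only place where care is needed is Step 1: fixing the normalization and sign of the Laplacian in the $(v,w)$-coordinates relative to the chosen standard basis, in particular the factor relating $Q_1 = 2wv$ to $\tfrac12(x_1'^2 + x_2'^2 - x_3'^2 - x_4'^2)$. The remainder is routine chain-rule bookkeeping.
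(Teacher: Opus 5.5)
Your proof is correct, and its Step 1 is the same as the paper's starting point. The paper also rewrites $\omega(0,H)$ in the natural coordinates as $4\pi\beta - \frac{1}{4\pi}(\partial_{v_1}\partial_{w_1} + \partial_{v_2}\partial_{w_2})$.

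The two arguments diverge in how the restriction is then extracted. The paper invokes Lemma \ref{lem:restrict_diff_op} to see that $\omega(0,H)\vert_{\mcQ}$ is a second-order element of $\mbC\{\alpha,\beta,\gamma\}$ with ten unknown polynomial coefficients. Such an operator is determined by its values on the monomials of degree $\leq 2$ in $\alpha,\beta,\gamma$. The paper verifies two of these by hand and leaves the rest to a computer check.

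You instead push an arbitrary $F(\alpha,\beta,\gamma)$ through the chain rule and obtain
\[
\beta F_{\beta\beta} + 2\gamma F_{\beta\gamma} + (wy^2v)F_{\gamma\gamma} + 2F_\beta + \tr(y)F_\gamma .
\]
You then close this up using $\tr(y)=0$ and Cayley--Hamilton in the form $y^2=-\alpha I$. These terms and the final assembly check out and give exactly \eqref{eq:H2_restriction}.

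What each route buys:
\begin{itemize}
\item Yours is a complete derivation by hand, and it explains the shape of the answer. There is no first-order $\partial_\gamma$ term because $y$ is traceless, and the $\alpha\beta\,\partial_\gamma^2$ coefficient comes from $wy^2v=-\alpha\beta$.
\item The paper's is a structure-free verification scheme. It carries over unchanged to the third-order operators $\mcD_3,\mcD_4$ later, whose restrictions are computed by machine anyway.
\end{itemize}

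One point is worth stating explicitly in your write-up. Your remark that $y$ ``plays no role'' only means that no $y$-derivatives occur; $y$ does enter through $\gamma=wyv$, as your computation shows. Also note that the chain-rule expression has coefficients in $\mbC[\alpha,\beta,\gamma]$. Hence it is an element of $\mbC\{\alpha,\beta,\gamma\}$ agreeing with $\omega(0,H)$ on $\mcA$, which is precisely what $\vert_{\mcQ}$ means.
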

\begin{proof}
We use the coordinates from \eqref{eq:nat_coords}. The operator $ω(0,H)$ is defined by choosing the standard basis \eqref{eq:standard_basis_R_2} for $\mbR^2\times \mbR_2$ and applying the formulas in \eqref{eq:Weil_Lie_algebra}, \eqref{eq:LRH}. Substituting back to the natural coordinates $v_1$, $v_2$, $w_1$ and $w_2$, we have\footnote{As a quick check, one immediately sees that this operator indeed annihilates $(y_{12}-y_{21})φ$ which has $(\mbR^2\times \mbR_2)$-factor $e^{-2π(v_1^2 + v_2^2 + w_1^2 + w_2^2)}$, compare \eqref{eq:Siegel_GL}.}
\begin{equation}\label{eq:omega_2_natural}
ω(0,H) = \underset{=\,4πβ}{\underbrace{4π(v_1w_1 + v_2w_2)}} - \frac{1}{4π}\Big(\partial_{v_1}\partial_{w_1} + \partial_{v_2}\partial_{w_2}\Big)
\end{equation}
The formula shows that $ω(0,H)$ is a second order differential operator. By Lemma \ref{lem:restrict_diff_op}, the restriction $ω(0,H)\vert_{\mcQ}$ is of second order as well. That is, it can be written in the form
\begin{equation}
\begin{aligned}
p_1 & + p_α\partial_α + p_β \partial_β + p_γ\partial_γ\\
& + p_{αα}\partial_α^2 + p_{ββ}\partial_β^2 + p_{γγ}\partial_γ^2\\[1mm]
& + p_{αβ}\partial_α\partial_β + p_{αγ}\partial_α\partial_γ + p_{βγ}\partial_β\partial_γ
\end{aligned}
\end{equation}
for unique polynomials $p_1,\ldots,p_{βγ}\in \mbC[α, β, γ]$. These are uniquely determined by the values $ω((0,H)\vert_{\mcQ})(\mfm)$ where $\mfm$ runs through the monomials of degree $\leq 2$ in $\{α, β, γ\}$. Let $ρ$ denote the inclusion map $\mbC[α, β, γ] \hookrightarrow \mbC[\mfsl_2\times \mbA^2\times (\mbA^2)^t]$. That is,
$$ρ(α) = \det(y),\quad ρ(β) = wv,\quad ρ(γ) = wyv.$$
With $\mcD$ denoting the operator in \eqref{eq:H2_restriction}, we ultimately need to check
$$ρ(\mcD(\mfm)) = ω(0,H)(ρ(\mfm))$$
for all monomials $\mfm$ of degree $\leq 2$. This is mechanical; for example,
$$ρ(\mcD(1)) = 4π(v_1w_1 + v_2w_2) = ω(0,H)(1)$$
and
$$\begin{aligned}
ρ(\mcD(β)) & = 4π(v_1w_1 + v_2w_2)^2 - \frac{1}{2π}\\
& = ω(0,H)(v_1w_1 + v_2w_2).
\end{aligned}$$
The remaining monomials can easily be checked by computer calculation. We omit further details.
\end{proof}

Let $(α, β)\in \mbR^2$. Then $\{(α,β)\}\times \mbR$ is contained in $\mcQ_{\rs, (1,1)}$ if $αβ > 0$ or intersects it in the union of the two open rays $(-\infty, -\sqrt{|αβ^2|})\cup (\sqrt{|αβ^2|}, \infty)$ when $αβ \leq 0$. We say that a function $f\in \mcC^\infty(\mcQ_{\rs, (1,1)})$ has \emph{rapid decay in $γ$} if for all $(α,β)$ with $α\neq 0$, $f(α, β, γ)$ decays faster than the inverse of any polynomial in $γ$ as $|γ|\to \infty$.

\begin{proposition}\label{prop:unique_extension}
Let $f_0(α, 0, γ)$ be a smooth function on $\mcQ_{\rs,(1,1)}\cap \{β = 0\}$. There exists at most one function $f \in \mcC^\infty(\mcQ_{\rs,(1,1)})$ with the following properties:
\begin{itemize}
\item[(a)] $f(α,0,γ) = f_0(α, 0, γ)$ for all $α$ and $γ$,
\item[(b)] $ω(0,H)\vert_{\mcQ}(f) = 0$,
\item[(c)] for all $k\geq 1$, the iterated partial derivative $\partial_β^k(f)$ have rapid decay in $γ$, and
\item[(d)] $f(α, -, γ)$ is analytic for all fixed $α$ and $γ$ with $αγ\neq 0$.
\end{itemize}
\end{proposition}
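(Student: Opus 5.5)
By linearity it suffices to show: if $f$ satisfies (b), (c), (d) and $f(\alpha,0,\gamma)=0$ for all $\alpha,\gamma$, then $f\equiv 0$ on $\mcQ_{\rs,(1,1)}$. We argue by Taylor expansion in $\beta$ at $\beta=0$. By (d), for fixed $\alpha,\gamma$ with $\alpha\gamma\neq 0$ the function $\beta\mapsto f(\alpha,\beta,\gamma)$ is analytic on the connected interval of $\beta$ containing $0$ on which $(\alpha,\beta,\gamma)\in\mcQ_{\rs,(1,1)}$; recall that $\Delta=\alpha\beta^2+\gamma^2>0$ near $\beta=0$ because $\gamma\neq 0$. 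So it suffices to show that all the functions $g_k(\alpha,\gamma):=\partial_\beta^k f(\alpha,0,\gamma)$ vanish. The remaining points with $\alpha\gamma=0$ are then handled by density of $\{\alpha\gamma\neq0\}$ together with continuity of $f$.

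To get a recursion, multiply the operator of Lemma \ref{lem:restriction_H_2} by $4\pi$. Condition (b) then reads
\[
\beta\partial_\beta^2 f+2\partial_\beta f+2\gamma\partial_\beta\partial_\gamma f=16\pi^2\beta f+\alpha\beta\,\partial_\gamma^2 f .
\]
Apply $\partial_\beta^{k}$ and set $\beta=0$, using $\partial_\beta^k(\beta F)|_{\beta=0}=k\,\partial_\beta^{k-1}F|_{\beta=0}$. This gives
\[
(k+2)\,g_{k+1}+2\gamma\,\partial_\gamma g_{k+1}=k\bigl(16\pi^2 g_{k-1}+\alpha\,\partial_\gamma^2 g_{k-1}\bigr)\qquad(k\ge 0),
\]
where we set $g_{-1}:=0$. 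The induction runs on the claim that $g_j=0$ for all $j\le k$. The start is $g_0=0$ by (a). For the step, the right-hand side vanishes, so $g_{k+1}$ solves the first order ODE $2\gamma\,\partial_\gamma g+(k+2)g=0$ on each ray $\gamma>0$ and $\gamma<0$. Its solutions are $g_{k+1}=c_\pm(\alpha)\,|\gamma|^{-(k+2)/2}$, i.e.\ pure powers in $\gamma$.

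Rapid decay is what kills these solutions. By (c) applied with $k+1\ge1$, $g_{k+1}(\alpha,\gamma)$ decays faster than any inverse polynomial as $|\gamma|\to\infty$. A nonzero multiple of $|\gamma|^{-(k+2)/2}$ does not decay that fast, so $c_\pm(\alpha)=0$ whenever $\alpha\neq0$. Hence $g_{k+1}=0$, which completes the induction.

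The main obstacle is to make sure the hypotheses are used in the right domains. First, the rays in $\gamma$ used for the decay must lie in $\{\beta=0\}\cap\mcQ_{\rs,(1,1)}$; they do, since that set is $\{\gamma\neq0\}$, whose two components are exactly the rays. Second, analyticity in $\beta$ from (d) must propagate the vanishing of the Taylor coefficients at $\beta=0$ to the whole $\beta$-interval in $\mcQ_{\rs,(1,1)}$. This requires that interval to be connected and to contain $0$. It is, being $\{\beta:\alpha\beta^2>-\gamma^2\}$, which is all of $\mbR$ or a symmetric interval around $0$. Otherwise the argument is the routine coefficient recursion above.
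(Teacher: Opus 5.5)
Your proposal is correct and follows essentially the same argument as the paper. Both proofs reduce to $f_0 = 0$, use analyticity in $\beta$ on the interval $\{\beta : \alpha\beta^2 + \gamma^2 > 0\}$ to reduce to the vanishing of the Taylor coefficients at $\beta = 0$, derive the same recursion from $4\pi\,\omega(0,H)\vert_{\mcQ}$ (yours is the paper's with the index shifted by one), and use rapid decay in $\gamma$ to kill the Euler-type solutions $|\gamma|^{-(k+2)/2}$.
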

\begin{proof}
The difference $δ = f - f'$ of two solutions $f, f'$ to the given differential equation is a smooth function that satisfies (b), (c), (d) and $δ(α, 0, γ) \equiv 0$. So in the following, we assume that $f$ satisfies (b), (c) and (d) with $f_0(α, 0, γ) \equiv 0$ and show that $f = 0$.

As $f$ is continuous by assumption, it suffices to show $f(α,-,γ) \equiv 0$ whenever $αγ \neq 0$. So we assume $αγ \neq 0$ in the following. The intersection
\begin{equation}\label{eq:interval_11}
(\{α\}\times \mbR\times \{γ\}) \cap \mcQ_{\rs, (1,1)}
\end{equation}
equals $\mbR$ if $α>0$ and $(-|γ|/\sqrt{|α|},\,\, |γ|/\sqrt{|α|})$ if $α < 0$. In particular, it is always an open interval that contains the point $(α, 0, γ)$. By assumption, the restriction of $f$ to \eqref{eq:interval_11} is analytic. In order to prove its vanishing, it hence suffices to show that its Taylor series in $β = 0$ vanishes. In other words, our task is to prove that all directional derivatives $f^{(k)}(α,γ) := \partial_β^k(f)(α,0,γ)$ vanish.

We fix $k \geq 1$ and assume by induction that we already know $f^{(i)} = 0$ when $i < k$, the statement $f^{(0)} = 0$ corresponding to our assumption $f(α, 0, γ) = 0$. Note that $\partial^m_β (βf)\vert_{\{β = 0\}} = m f^{(m-1)}$ for all $m\geq 0$, our convention being that $0\cdot f^{(-1)} = 0$. Going through \eqref{eq:H2_restriction} term by term, this gives
\begin{equation}
\begin{aligned}
0 & = 4π[\partial_β^{k-1} ω(0, H)\vert_{\mcQ}](f)\vert_{\{β = 0\}}\\[1mm]
& = 16π^2(k-1)f^{(k-2)} - 2 f^{(k)} - (k-1)f^{(k)} + (k-1) α\partial_γ^2 f^{(k-2)} - 2γ\partial_γ f^{(k)}.
\end{aligned}
\end{equation}
Since $(k-1)f^{(k-2)} = 0$ by induction hypothesis (if $k = 1$ our convention applies instead), this leads to the Euler type differential equation
$$γ\partial_γf^{(k)} = - \frac{k+1}{2}f^{(k)}$$
which implies
$$f^{(k)}(α, λγ) = λ^{-(k+1)/2} f^{(k)}(α,γ)$$
for all $λ \geq 1$. Since $\partial^k_β(f)$ has rapid decay in $γ$ by assumption (c), the only possibility is $f^{(k)} = 0$ as was to be shown.
\end{proof}

\begin{lemma}\label{lem:analytic_properties_simple}
The functions $\Orb(Φ_{1,1})$ and $\Orb(ψ_{1,1})$ satisfy assumptions (b) and (c) of Proposition \ref{prop:unique_extension}.
\end{lemma}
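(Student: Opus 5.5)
Property (b) is a weight computation, and property (c) is a bound on the explicit integrals \eqref{eqn:Phi11 explicit general} and \eqref{eqn: unitary int}. I will handle them in that order.

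\textbf{Property (b).} By Proposition \ref{prop:diff_op_orb_int_generic}, we have $\omega(0,H)\vert_{\mcQ}(\Orb(-,Φ)) = \Orb(-,\omega(0,H)Φ)$ on either side. It therefore suffices to check that $\omega(0,H)$ kills $Φ_{1,1}$ and $ψ_{1,1}$.

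For $Φ_{1,1}$, this holds because $\iota(Φ_{1,1})$ is a multiple of $ζ$. This variable has biweight $(-1,0)$, so its second weight is $0$; this is the footnote to Lemma \ref{lem:restriction_H_2}. Alternatively, one applies \eqref{eq:omega_2_natural} directly: $4π(v\cdot w)\,e^{-2π(|v|^2+|w|^2)}$ cancels against $\frac{1}{4π}\sum \partial_{v_i}\partial_{w_i}$ of the Gaussian.

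For $ψ_{1,1}$, it corresponds to $\mathbf 1\in\mcG$. By \eqref{eq:Weil_rep_T11}, the operator $\omega(0,H)$ has no constant term, so $\omega(0,H)\mathbf 1 = 0$; this matches the biweight $(-1/2,0)$ recorded in Corollary \ref{cor:cyclic coinv n=2}.

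\textbf{Property (c).} Here I need $\partial_β^k$ of both orbital integrals to decay rapidly in $γ$ for fixed $α\neq0$ and fixed $β$. I do not want to differentiate under the explicit integrals, since the sections depend on $β$ in a complicated way, for instance through $\sqrt{Δ}$. Instead I use Lemma \ref{lem:restriction_surjective}. Away from $Δ = 0$, the vector field $\partial_β$ on $\mcQ_\rs$ is the restriction of an invariant operator $\mcD\in \mbC\{\mfs_2\}[Δ^{-1}]^{\GL_2}$. The proof of Proposition \ref{prop:diff_op_orb_int_generic} works equally well for such operators on $\mcQ_\rs$. Hence $\partial_β^k\Orb(-,Φ) = \Orb(-,\mcD^kΦ)$, where $\mcD^kΦ$ is a polynomial in the coordinates and in $Δ^{-1}$ times the Gaussian, and similarly on the unitary side.

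For fixed $(α,β)$ and $|γ|$ large, $Δ \sim γ^2$, so the factors $Δ^{-m}$ are bounded. The claim then reduces to showing that orbital integrals of polynomial-times-Gaussian functions decay rapidly in $γ$. To see this, I plug in the explicit parametrizations \eqref{eqn:Phi11 explicit general} and \eqref{eqn: unitary int}; the polynomial prefactors are harmless.

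In \eqref{eqn:Phi11 explicit general}, the exponent contains $t_2^{-2}γ^2 + t_1^2 + \big(\tfrac{t_1}{t_2}b^2α+\tfrac{t_2}{t_1}\big)^2 + \dots$. By AM--GM in $t_2$, I combine $t_2^{-2}γ^2$ with the $t_2$-growing terms. This gives a bound of the form $e^{-c|γ|^{1/2}}$ times an integrable function.

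The unitary side works the same way from \eqref{eqn:unitary orb int again} and \eqref{eqn:U11 orb int exponent}. The term $t^2u(γ+\sqrt{Δ})|α|^{-1/2}$ together with $|α|(t^2u\mp t^{-2})^2$ forces decay like $e^{-c\sqrt{γ}}$.

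\textbf{Main obstacle.} The hard step is the uniformity in (c). The bounds must hold with the polynomial weights produced by $\mcD^k$, and on the $\GL_2$ side one has to check carefully that no region of $(t_1,t_2,b)$ escapes the $γ$-dependence. The region where $t_2$ is large but $t_2/t_1$ is bounded is the one to examine first. There I would use the $t_1^2$ and $t_2^2/t_1^2$ terms to control $t_2$, so that $t_2^{-2}γ^2$ remains large.
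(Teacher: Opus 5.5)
Your part (b) is the paper's argument. Your reduction of $\partial_\beta^k$ in (c) to undifferentiated orbital integrals via Lemma \ref{lem:restriction_surjective} is also the paper's. The paper multiplies by a power of $\Delta$ so that Proposition \ref{prop:diff_op_orb_int_generic} applies to an invariant operator with polynomial coefficients; your $\Delta^{-1}$-coefficient version is equivalent because $\Delta$ is constant on orbits. You diverge from the paper at the base input, namely rapid decay in $\gamma$ of $\Orb(-,P\varphi)$. The paper takes it from \cite[Lemma 5.5]{Xue}, applied to $\gamma^k\Phi$. You instead propose to estimate the explicit $ANK$ integrals by hand. That estimate is exactly the step you leave open, and on the $\GL_2$ side your sketch does not go through as written.

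A pointwise lower bound of order $|\gamma|^{2/3}$ on the exponent is easy. You also need an integrable majorant uniform in $|\gamma|\geq\gamma_0$. Boundedness would already suffice, since $\gamma=wyv$ is invariant and $\gamma^N\Orb(-,\Phi)=\Orb(-,\gamma^N\Phi)$; and $|P|e^{-2\pi\ov{Q}}\leq Ce^{-\pi\ov{Q}}$ for the majorant $\ov{Q}$ disposes of the polynomial prefactors. For $\alpha<0$ the exponent in \eqref{eqn:Phi11 explicit general} equals
\[
\Big(\tfrac{t_1}{t_2}|\alpha|(1+b^2)-\tfrac{t_2}{t_1}\Big)^2+2|\alpha|+t_1^2(1+b^2)+\big(t_1^{-1}\beta-t_2^{-1}b\gamma\big)^2+t_2^{-2}\gamma^2 .
\]
Consider the curve $|b|\approx t_2/(t_1\sqrt{|\alpha|})$. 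Along it the first square vanishes and $t_1^2(1+b^2)\approx t_2^2/|\alpha|$ stays bounded. The $b$-window there has width $\asymp|\alpha|^{-1/2}$, independently of $t_1$. So if you drop the cross term, the $dt_1/t_1$-integral diverges at $t_1=0$.

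Convergence there comes only from $(t_1^{-1}\beta-t_2^{-1}b\gamma)^2\approx t_1^{-2}(\beta\mp\gamma/\sqrt{|\alpha|})^2$, i.e.\ from $\Delta\neq0$. This term is not monotone in $\gamma$, so you cannot get a majorant by discarding the $\gamma$-dependent terms. The critical region is therefore $t_1\to0$ along this curve. It is not the region you single out ($t_2$ large with $t_2/t_1$ bounded), where $t_1^2$ already dominates. Moreover, the ``$t_2$-growing term'' you want to pair with $t_2^{-2}\gamma^2$ is absent on this curve; you must use $t_1^2b^2$ instead.

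You can close the gap in either of two ways:
\begin{enumerate}
\item Do a case split for $|\gamma|\geq C\sqrt{|\alpha|}\,|\beta|$ that exploits the cross term.
\item Cite Xue's lemma, as the paper does.
\end{enumerate}
On the unitary side no such problem occurs. Dropping the $\beta^2$-term of \eqref{eqn:U11 orb int exponent} and using $\gamma+\sqrt{\Delta}\geq\gamma_0$ already gives a $\gamma$-free integrable majorant.
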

\begin{proof}
In general, any Siegel Gaussian on a real quadratic space of signature $(p,q)$ has weight $(p-q)/2$ for the Weil representation. On the Fock model side, this can be seen from the formula for $ω(H)$ in \eqref{eq:LRH_Fock}, for which $1$ is an eigenvector of weight $(p-q)/2$. On could also express $ω(H)$ in terms of \eqref{eq:Weil_Lie_algebra} and apply it directly to the Siegel Gaussian in \eqref{eq:Siegel Gaussian general}. In the setting of the lemma, the $V_{1,1}$-component of $ψ_{1,1}$ and the $\mbR^2\times \mbR_2$-component of $Φ_{1,1}$ are given by Siegel Gaussians. The signatures of $V_{1,1}$ (as real quadratic space) and $\mbR^2\times \mbR_2$ are $(2,2)$, so these components have weight $0$ and are annihilated by $ω(0,H)$. We then obtain (b) from Proposition \ref{prop:diff_op_orb_int_generic}.

Property (c) is completely general and follows from \cite[Lemma 5.5]{Xue}. The argument is the same on the $\GL_2$- and unitary side, so we only present the general linear case. Let $Φ$ be any Schwartz function on $\mfsl_2(\mbR)\times \mbR^2 \times \mbR_2$ and fix $α \neq 0$. In the terminology of \cite[Lemma 5.5]{Xue}, $Y(α, β, γ)$ is strongly regular semi-simple because $D(X) = \det\left(\begin{smallmatrix} & 1 \\ -α & \end{smallmatrix}\right) = α$ is non-zero. The term $\{1, |D(X)|^{-1/2}\}$ is then just a constant. Applying the cited lemma to the products $γ^kΦ$ for $k\geq 1$ and noting that $Δ\sim γ^2$ as $|γ|\to \infty$, we obtain the rapid decay in $γ$ of $\Orb(Φ)$.

Condition (c) also requires rapid decay in $γ$ for all iterated partial derivatives $\partial_β^k\Orb(Φ)$. Given $k$, by Lemma \ref{lem:restriction_surjective} there exist a power $m$ and an invariant differential operator $\mcD\in \mbC\{\mfsl_2\times \mbA^2\times (\mbA^2)^t\}^{\GL_2}$ such that $\mcD\vert_\mcQ = Δ^m\partial_β^k$. Then $Δ^m\partial_β^k \Orb(Φ) = \Orb(\mcD(Φ))$ has rapid decay in $γ$ by the previous argument, showing that also $\partial_β^k\Orb(Φ)$ has rapid decay in $γ$.
\end{proof}

\begin{proposition}\label{prop:analytic_U}
Fix $\alpha , \gamma \neq 0$. The function $\beta \mapsto \Orb(ψ_{1,1})(\alpha,\beta,\gamma)$ is real analytic in $\beta$, i.e.\ it satisfies part (d) of \Cref{prop:unique_extension}.
\begin{proof}
As in the proof of \Cref{prop:beta = 0 match}, we may assume without loss of generality that $\gamma > 0$.
Recall from \eqref{eqn:unitary orb int again}, we had
\[
\Orb(\psi_{1,1})(\alpha,\beta,\gamma) =     \int_0^{\infty} \int_{1}^{\infty} 		 \exp \left(-2\pi h(t,u) \right)  \,\frac{du}{\sqrt{u-1}}\, \frac{dt}t
\]
with
\begin{equation*}
h(t,u) = h_{\alpha,\beta,\gamma}(t,u) :=  |\alpha|( t^2  u - \mathrm{sgn}(\alpha)t^{-2} )^2 + \frac{t^2u(\gamma+ \sqrt{\Delta})}{|\alpha|^\frac12}  + \frac{t^{-2} \beta^2 |\alpha|^{\frac12}}{\gamma+ \sqrt{\Delta}}   + 2 \alpha.
\end{equation*}
Using the relation $ \beta^2 = \alpha^{-1} (\Delta - \gamma^2)$, a little rearranging gives
\[
h =  |\alpha| ( t^2  u - \mathrm{sgn}(\alpha)t^{-2} )^2  +  \frac{\sqrt{\Delta}}{|\alpha|^{\frac12}} (t^2 u + \mathrm{sgn}(\alpha) t^{-2}) + \frac{\gamma}{|\alpha|^{\frac12}} (t^2 u - \mathrm{sgn}(\alpha)t^{-2}) + 2 \alpha.
\]
Now given $\alpha, \gamma$ with $\alpha\gamma \neq 0$ and $\gamma>0$, fix a point $\beta^* \in \mathbb R$ such that $(\alpha, \beta^*, \gamma) \in \mathcal Q_{\rs, (1,1)}$.  It will suffice to show that there exists a complex ball $B_{r}(\beta^*)\subset \bbC$ of radius $r>0$ around $\beta^*$ such that $	\Orb(\psi_{1,1})(\alpha, \beta^*, \gamma) $ extends to a holomorphic function $\Orb(\psi_{1,1})(\alpha, \beta_{\mathbb C}, \gamma)$ for $\beta_{\bbC} \in B_{r}(\beta^*)$.

Noting that $\exp(- \pi h_{\alpha, \beta_{\mathbb C}, \gamma}(t,u))$ defines an entire function of $\beta_{\bbC}$ via the same formula \eqref{eqn:U11 orb int exponent}, it suffices to show that there exists $r>0$ such that $\exp(- \pi h_{\alpha, \beta_{\mathbb C}, \gamma})$ is absolutely bounded by an integrable function of $b$ and $u$, uniformly on $B_{r}(\beta^*)$.

More precisely, we consider $\Delta = \alpha \beta^2 + \gamma^2$ as a complex function of $\beta$. Since $\Delta|_{\beta = \beta^*} > 0$, the square root $\sqrt{\Delta}$ defines a holomorphic function on $B_r(\beta^*)$ for sufficiently small $r$. Moreover, we may choose $r$ such that  $B_r(\beta^*) \cap \bbR^3 \subset \calQ_{\rs, (1,1)} $ and
\[
Re(\sqrt{\Delta}) > C
\]
for some constant $C>0 $ uniformly on $B_r(\beta^*)$.

In particular, if $\alpha >0$, we write
\begin{align*}
h &=  |\alpha| ( t^2  u - \mathrm{sgn}(\alpha)t^{-2} )^2  +  \frac{\sqrt{\Delta}}{|\alpha|^{\frac12}} (t^2 u + \mathrm{sgn}(\alpha) t^{-2}) + \frac{\gamma}{|\alpha|^{\frac12}} (t^2 u - \mathrm{sgn}(\alpha)t^{-2}) + 2 \alpha \\
&= \left( \alpha^{\frac12} (t^2u - t^{-2}) + \frac{\gamma}{2\alpha } \right)^2 + 						 \frac{\sqrt{\Delta}}{\alpha^{\frac12}} (t^2 u +   t^{-2}) + C'
\end{align*}
where $C' := - \frac{\gamma^2}{4\alpha^{2}} + 2\alpha$ is an overall constant and hence
\[
Re(h) > \frac{C}{\alpha^{\frac12}} (t^2 u + t^{-2})  + C'.
\]
We then have
\[
(u-1)^{-\frac12} t^{-1} |e^{- \pi h}| = 	(u-1)^{-\frac12} t^{-1} e^{- \pi Re(h)} < (u-1)^{-\frac12} t^{-1} e^{- \pi C'} e^{- \frac{\pi C}{\sqrt \alpha} \left( t^2 u + t^{-2} \right) }
\]
uniformly on $B_r(\beta^*)$	and it is straightforward to verify the latter function is integrable on the region $\{ u > 1, t>0\} \subset \bbR^2$.

Similarly, if $\alpha<0$, then there are constants $C' ,C''\in\bbR$ such that
\[
Re(h) > |\alpha|(t^4 + t^{-4} ) + 2 |\alpha| u + C' t^{-2} + C''
\]
and we may conclude the argument as before.

\end{proof}

\end{proposition}

\subsection{Analyticity of $\Orb(\Phi_{1,1})$}
 Our proof that $\Orb(\Phi_{1,1})$ is analytic is less direct than the unitary case. We begin by recalling a form of the regularity theorem for solutions of partial differential equations. Let $U\subseteq \mbR^n$ be an open subset and let $\mcD = \sum_{I} f_I \partial^I$ be a differential operator on $U$ with smooth coefficients $f_I\in \mcC^\infty(U)$. The \emph{order} of $\mcD$ is defined as $\max\{|I| \text{ s.t. } f_I\neq 0\}$. Its \emph{principal symbol} is the polynomial
$$\mr{PS}_\mcD(ξ_1,\ldots,ξ_n) = \sum_{|I| = \mr{ord}(\mcD)} f_I ξ^I$$
in variables $ξ_i$ with coefficients in $\mcC^\infty(U)$.
\begin{theorem}[\protect{\cite[Theorem 8.6.1]{Hoermander}}]\label{thm:reg_thm_diff_eqn}
Let $\mcD_1,\ldots,\mcD_r$ be a set of differential operators on the open subset $U\subseteq \mbR^n$, and let $f \in \mcC^\infty(U)$ be a solution to $\mcD_i(f) = 0$ for all $i\in 1,\ldots,r$. Assume:
\begin{itemize}
\item The coefficients of the $\mcD_i$ are analytic functions.
\item The system of homogeneous equations defined by the $\mr{PS}_{\mcD_i}$ has no non-trivial solution. That is, for every $x\in U$, $ξ_1 = \ldots = ξ_n = 0$ is the only solution in $\mbR^n$ to
$$\mr{PS}_{\mcD_i}(x, ξ_1,\ldots,ξ_n) = 0\quad \text{for all }i = 1,\ldots.r$$
\end{itemize}
Then $f$ is an analytic function.
\end{theorem}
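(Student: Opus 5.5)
The plan is to reduce the system to a single elliptic operator with analytic coefficients, and then invoke (or reprove) analytic elliptic regularity for one equation in the sense of Petrowsky and Morrey--Nirenberg. Analyticity is a local property, so we fix $x_0\in U$ and work on a small closed ball $\ov{B}\subset U$ around $x_0$. On $\ov{B}$ all coefficients are real analytic and extend holomorphically to a complex neighbourhood. Let $m_i=\mr{ord}(\mcD_i)$ and choose an even integer $2N\geq \max_i m_i$ with $2N-m_i$ even for all $i$; if some $m_i$ has the wrong parity, first replace $\mcD_i$ by $\partial_{x_1}\mcD_i$ together with its companions, or simply compose with the Laplacian at the end. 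Put $\Delta=\sum_j\partial_j^2$ and
$$\mcE_i:=\Delta^{(2N-m_i)/2}\circ \mcD_i .$$
Then $\mcE_i(f)=0$, every $\mcE_i$ has order exactly $2N$ with analytic coefficients, and
$$\mr{PS}_{\mcE_i}(x,ξ)=(-1)^{(2N-m_i)/2}|ξ|^{2N-m_i}\,\mr{PS}_{\mcD_i}(x,ξ).$$

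Next form the single operator
$$\mcP:=\sum_i \mcE_i^{*}\mcE_i ,$$
where $\mcE_i^*$ is the formal adjoint: transpose, with complex conjugated coefficients. Its coefficients are again analytic, since the conjugate of a real-analytic function is real analytic. Clearly $\mcP(f)=0$. Its order is $4N$ and its principal symbol is
$$\mr{PS}_{\mcP}(x,ξ)=\sum_i |ξ|^{2(2N-m_i)}\,|\mr{PS}_{\mcD_i}(x,ξ)|^2 ,$$
up to a global sign $(-1)^{2N}=1$. By the hypothesis on the principal symbols this quantity is strictly positive for real $ξ\neq 0$. By continuity and compactness of $\ov{B}\times S^{n-1}$ it is in fact bounded below by $c|ξ|^{4N}$ with $c>0$. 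Thus $\mcP$ is uniformly elliptic on $\ov{B}$, and it suffices to show that smooth solutions of a single elliptic equation with analytic coefficients are analytic.

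For that last step I would follow the classical Morrey--Nirenberg nested-domain argument. One first establishes an interior $L^2$ a priori estimate (G\aa rding/Friedrichs) of the form
$$\|u\|_{H^{4N}(B_{\rho})}\leq C\big(\|\mcP u\|_{L^2(B_{\rho+δ})}+δ^{-4N}\|u\|_{L^2(B_{\rho+δ})}\big).$$
It is crucial that $C$ is independent of $\rho$ and $δ$, with the explicit power of $δ$ coming from cutoff functions whose $j$-th derivatives are $O(δ^{-j})$. One then applies this estimate to $u=\partial^\alpha f$. The commutator $[\mcP,\partial^\alpha]$ is controlled using Cauchy bounds $|\partial^\beta a_I|\leq A^{|\beta|+1}\beta!$ on the coefficients. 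Shrinking the ball by $δ=\rho/k$ at each of $k$ steps, an induction on $|\alpha|=k$ yields
$$\|\partial^\alpha f\|_{L^2(B_{\rho/2})}\leq M^{k+1}k! .$$
Sobolev embedding converts this into the same kind of bound in sup norm, which gives convergence of the Taylor series near $x_0$. The main obstacle is exactly this bookkeeping: making the combinatorics of the commutator terms and of the shrinking radii close up into a factorial bound rather than a worse one such as $(k!)^{1+ε}$. I would organise it via weighted seminorms $\sup_{0<δ<\rho}δ^{k}\|\partial^k f\|_{B_{\rho-δ}}$, as in Hörmander's treatment, and then simply cite \cite[Theorem 8.6.1]{Hoermander} for the details.
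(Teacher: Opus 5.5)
Your argument is correct, but it takes a different route from the paper.

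\textbf{The paper's route.} The paper does not reduce to a single operator. It applies Hörmander's microlocal inclusion $\mr{WF}_a(f)\subseteq \mr{Char}(\mcD_i)\cup \mr{WF}_a(\mcD_i f)=\mr{Char}(\mcD_i)$ to each $\mcD_i$ separately. The hypothesis on principal symbols says precisely that $\bigcap_i \mr{Char}(\mcD_i)=\emptyset$, so $\mr{WF}_a(f)=\emptyset$.

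\textbf{Your route.} You replace the overdetermined system by the single operator $\mcP=\sum_i \mathcal{E}_i^*\mathcal{E}_i$. Its principal symbol $\sum_i |\xi|^{2(2N-m_i)}|\mr{PS}_{\mcD_i}|^2$ is positive for real $\xi\neq 0$ by the same hypothesis. After that you only need the classical, non-microlocal statement: analytic regularity for one elliptic equation with analytic coefficients, as in Petrowsky or Morrey--Nirenberg.

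\textbf{Comparison.} The microlocal route is shorter and needs no adjoints or order-padding. It also gives the finer information $\mr{WF}_a(f)\subseteq\mr{Char}(\mcD_i)$ for each $i$. Your route avoids analytic wave front sets, at the cost of an elementary algebraic reduction. Note, though, that if you cite \cite[Theorem 8.6.1]{Hoermander} for the elliptic step, the reduction becomes superfluous. That theorem applies to the system directly, which is exactly what the paper does.

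\textbf{A small repair in the reduction.} Composing with the Laplacian does not change the parity of the order, so that alternative does not handle odd $m_i$. Replacing $\mcD_i$ by the whole family $\partial_{x_1}\mcD_i,\ldots,\partial_{x_n}\mcD_i$ does work. For real $\xi\neq 0$, the symbols $\xi_j\mr{PS}_{\mcD_i}(x,\xi)$ vanish simultaneously exactly when $\mr{PS}_{\mcD_i}(x,\xi)=0$.

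It is simpler to pad after forming $\mcD_i^*\mcD_i$, which always has even order $2m_i$. With the convention $\partial_j\mapsto i\xi_j$ one has $\mr{PS}(\mcD^*\mcD)=|\mr{PS}_{\mcD}|^2$ and $\mr{PS}(-\Delta)=|\xi|^2$. Hence $\sum_i(-\Delta)^{M-m_i}\mcD_i^*\mcD_i$ with $M=\max_i m_i$ is already elliptic, and no parity bookkeeping is needed.
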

\begin{proof}
Considering the special case of the analytic wave front set in the cited theorem and using $\mcD_i(f) = 0$, we get (in the notation of \cite[\S8.6]{Hoermander})
$$\mr{WF}_a(f) \subseteq \mr{Char}(\mcD_i)\cup \mr{WF}_a(0)$$
for all $i = 1,\ldots,r$. We have $\mr{WF}_a(0) = \emptyset$. The condition on principal symbols precisely says $\bigcap_{i = 1}^r \mr{Char}(\mcD_i) = \emptyset$. Hence $\mr{WF}_a(f) = \emptyset$ which means that $f$ is analytic.
\end{proof}

\begin{proposition}\label{prop:analytic_GL2}
For fixed $\alpha,\gamma \neq 0$, the orbital integral $\Orb(-,Φ_{1,1})(\alpha,\beta,\gamma)$ is real analytic as a function of $\beta$, i.e.\ it satisfies part (d) of Proposition \ref{prop:unique_extension}.
\end{proposition}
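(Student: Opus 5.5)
The plan is to apply the regularity theorem, Theorem \ref{thm:reg_thm_diff_eqn}, to $f = \Orb(-,Φ_{1,1})$ on the open set $U = \mcQ_{\rs,(1,1)} \cap \{αγ\neq 0\}$. This would give joint real analyticity in $(α,β,γ)$, and in particular analyticity in $β$. We already have one analytic-coefficient equation from Lemma \ref{lem:restriction_H_2}, namely $ω(0,H)\vert_\mcQ(f)=0$. Its principal symbol is
$$-\tfrac{1}{4π}\big(βξ_β^2 - αβξ_γ^2 + 2γξ_βξ_γ\big),$$
a quadratic form in $(ξ_β,ξ_γ)$ with discriminant $4γ^2+4αβ^2 = 4Δ>0$ on $U$. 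It has nontrivial real zeros, and it does not involve $ξ_α$ at all. We therefore need further equations $\mcD_i(f)=0$, with analytic (e.g. polynomial or $Δ^{-1}$-rational) coefficients, whose principal symbols jointly have only the trivial real common zero.

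To produce these equations, use that $Φ_{1,1}$ corresponds to $ζ$ in the Fock model (\eqref{eq:zeta_explicit}). It is an eigenvector of $ω(H,0)$ and of $ω(0,H)$, with biweight $(-1/2,0)$. Further annihilators come from the Fock description. Since $ζ$ is annihilated by $\partial_{z_\pm}$, $\partial_{z'_\pm}$ and $\partial_{ζ'_\pm}$, the operator $\omega(L,0)$ acts as $-\tfrac{1}{8π}ζ^2$ on it. This gives the relation $(ω(L,0)+\tfrac{1}{8π}ζ^2)Φ_{1,1}=0$, which is not directly in $\mfA$. A better source is the finite-dimensionality of the $\mfsl_2\times\mfsl_2$-span. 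In the first factor, $ζ$ is a lowest or highest weight vector for the oscillator of signature $(1,2)$. Concretely, $ω(R,0)ζ = 2π\partial_ζ^2ζ - \tfrac{1}{8π}z_+z_-ζ$ is a combination of cubic terms. Then $(ω(H,0)+1/2)Φ_{1,1}=0$ gives the equation $\mcD_2 = ω(H,0)\vert_\mcQ + \tfrac12$. We compute $\mcD_2$ exactly as in Lemma \ref{lem:restriction_H_2}, by checking on monomials of degree $\le 2$ in $α,β,γ$ (by computer if needed). It is a second order operator whose symbol should involve $ξ_α$ and $ξ_γ$, coming from $\det y$ and $wyv$. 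The Casimirs of both $\mfsl_2$ factors also act on $Φ_{1,1}$ by scalars. Their restrictions to $\mcQ$ are fourth order operators in $\mfA$, giving $\mcD_3,\mcD_4$ with analytic coefficients.

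With these in hand, the key step, and the expected main obstacle, is verifying that the principal symbols of $\mcD_1=ω(0,H)\vert_\mcQ$, $\mcD_2$, $\mcD_3,\mcD_4$ have no common nonzero real root at any point of $U$. I would first try the pair $\mcD_1,\mcD_2$. On $U$, the zero set of $\mathrm{PS}_{\mcD_1}$ consists of two real lines in the $(ξ_β,ξ_γ)$-plane, times arbitrary $ξ_α$. It then suffices to check that $\mathrm{PS}_{\mcD_2}$ is nonvanishing on these planes away from the origin, e.g. by showing that $\mathrm{PS}_{\mcD_2}$ is definite in $ξ_α$ with suitable cross terms. If this fails at some points, I would add the Casimir symbols and eliminate variables (a resultant computation, plausibly done by computer given the number of terms). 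Once ellipticity of the system is established, Theorem \ref{thm:reg_thm_diff_eqn} yields analyticity of $f$ on $U$. Restricting to the open interval \eqref{eq:interval_11} in the $β$-direction gives (d).
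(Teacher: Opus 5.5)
Your overall plan is the same as the paper's: apply Theorem \ref{thm:reg_thm_diff_eqn} on $\{\Delta>0,\ \alpha\gamma\neq 0\}$ to a system of restricted invariant operators that annihilate $\Orb(-,\Phi_{1,1})$. The whole difficulty is producing enough such operators, and that is where your proposal has a genuine gap.

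\textbf{The two weight operators are not enough.} The symbol of $\omega(H,0)\vert_\mcQ$ is proportional to $\beta^2\xi_\gamma^2-4\gamma\xi_\alpha\xi_\gamma-4\alpha\xi_\alpha^2$ (this is $\sigma_1$ in \eqref{eq:princ_symbols}). It contains no $\xi_\beta$, while the symbol of $\omega(0,H)\vert_\mcQ$ contains no $\xi_\alpha$. With $\xi_\gamma=1$, each has real roots, since the discriminants are $16\Delta>0$ and $4\Delta>0$. Choosing $\xi_\alpha$ and $\xi_\beta$ independently therefore gives a nonzero real common characteristic covector at \emph{every} point. There is no definiteness in $\xi_\alpha$ to exploit.

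\textbf{The Casimirs do not act by scalars.} Your fallback assumes they act on $\Phi_{1,1}$ by scalars; this holds neither on $\Phi_{1,1}$ nor on its orbital integral. First, $\zeta$ is not an extremal weight vector: your own computation gives $\omega(R,0)\zeta=-\tfrac{1}{8\pi}z_+z_-\zeta\neq 0$, and $\omega(L,0)\zeta=-\tfrac{1}{8\pi}\zeta^3\neq0$. (Also, $(\mfsl_2,\tr(y^2))$ has signature $(2,1)$.) With $C=H^2-2H+4RL$ taken in the second factor, one finds $C\zeta=\zeta\,(-4+\tfrac{1}{16\pi^2}z'_+z'_-\zeta'_+\zeta'_-)$.

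Passing to $\mcQ$ does not help. Use $\omega(0,h)=E_{vw}+2$ (with $E_{vw}$ the Euler operator in $v,w$), $\omega(0,e)=4\pi i\beta$ (for the standard triple $e,f,h$), and the restriction of $\partial_{v_1}\partial_{w_1}+\partial_{v_2}\partial_{w_2}$ read off from Lemma \ref{lem:restriction_H_2}. Then this Casimir restricts to $4\partial_\gamma\circ\Delta\circ\partial_\gamma=4\Delta\partial_\gamma^2+8\gamma\partial_\gamma$. This is second order, not fourth. The first Casimir restricts to one quarter of this operator minus $3/4$, so it gives nothing new.

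Suppose this operator acted on $F=\Orb(-,\Phi_{1,1})$ by a scalar $c$. Then $F(\alpha,0,\cdot)$ would solve the Euler equation $4\gamma^2F''+8\gamma F'=cF$, whose solutions are of power type in $\gamma$. That contradicts the rapid decay in $\gamma$ and the nonvanishing of \eqref{eqn:GL2 orb b=0 final}. Even granting the claim, the symbol $\Delta\xi_\gamma^2$ together with $\sigma_1,\sigma_2$ still vanishes at $(\xi_\alpha,\xi_\beta,\xi_\gamma)=(0,1,0)$ on $\{\beta=0\}$. That is exactly the hyperplane carrying the boundary data.

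\textbf{What is missing.} You need annihilators beyond the weight operators and the centre of $U(\mfsl_2\times\mfsl_2)$. The paper finds them by computer search: two third-order $\GL_2$-invariant operators $\mcD_3,\mcD_4$ with cubic coefficients. Here $\mcD_3(\Phi'_{1,1})=0$, and $\mcD_4(\Phi'_{1,1})$ lies in the image of $\mfgl_2(\mbC)$, so both kill the orbital integral. Their symbols $\sigma_3,\sigma_4$ are what make the system elliptic, including at $\beta=0$.
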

\begin{proof}
\mn It suffices to show analyticity of the orbital integral of
$$Φ'_{1,1}(y, v, w) := 2\sqrt{π} Φ_{1,1}\big(y/2\sqrt{π},\, v/2\sqrt{π},\, w/2\sqrt{π}\big).$$
We prefer to work with this scaled version because this removes all powers of $π$ in the following. The function $Φ_{1,1}'$ is concretely given by
$$Φ'_{1,1}(y, v, w) = (y_{12} - y_{21})e^{-q(y, v, w)/2}$$
with
$$q(y, v, w) = 2y_{11}^2 + y_{12}^2 + y_{21}^2 + v_1^2 + v_2^2 + w_1^2 + w_2^2.$$

Our goal is to prove analyticity of $\Orb(-,Φ_{1,1}')$ by using Theorem \ref{thm:reg_thm_diff_eqn}, so we need to exhibit suitable  differential operators satisfying the hypotheses therein. We found a system of four such operators $\mcD_1,\ldots,\mcD_4 \in \mbC\{\mfsl_2\times \mbA^2\times (\mbA^2)^t\}^{\GL_2}$ via a systematic computer search in the space of operators  annihilating $\Orb(\Phi'_{1,1})$. Having found these operators, it is relatively straightforward to independently verify that they are $\GL_2$-invariant, and that they annihilate $\Orb(\Phi'_{1,1})$. This verification is certified by the Sage code provided with the paper \cite{Sage}.

The first two operators are explicitly given as follows:
\begin{equation}
\label{eq:quad_annihilating_ops}
\begin{aligned}
\mcD_1 & = 4 \partial_{y_{12}}\partial_{y_{21}} + \partial_{y_{11}}^2 - 4 y_{12}y_{21} - 4 y_{11}^2 - 2,\\[1mm]
\mcD_2 & = - \partial_{v_1}\partial_{w_1} - \partial_{v_2}\partial_{w_2} + v_1 w_1 + v_2 w_2.
\end{aligned}
\end{equation}
They agree with the weight operators $-4(ω(H,0) + 1/2)$ and $ω(0,H)$ up to our scaling of variables, and it is immediately checked that they satisfy $\mcD_1(Φ_{1,1}') = \mcD_2(Φ_{1,1}') = 0$. This vanishing reflects that $Φ_{1,1}$ has biweight $(-1/2, 0)$ for the Weil representation.

The operators $\mcD_3$ and $\mcD_4$ are of order three with cubic coefficients and have quite a few terms; the precise formulas will not be needed for the following discussion but the curious reader may find them in the provided Sage code \cite{Sage}. We have $\mcD_3(Φ_{1,1}') = 0$ while $\mcD_4$ satisfies a relation of the form
\begin{equation}\label{eq:relation_D_4}
\mcD_4(Φ_{1,1}') + E_{11}(f_{11}) + E_{12}(f_{12}) + E_{21}(f_{21}) + E_{22}(f_{22}) = 0
\end{equation}
with $f_{k\ell}\in \mbC[\mfsl_2\times \mbA^2\times (\mbA^2)^t] e^{-q(y,v,w)/2}$ and where the $E_{k\ell}$ denote the standard basis vectors of $\mfgl_2(\mbC)$. Again, we refer the reader to the provided code \cite{Sage} for precise formulas for the $f_{k\ell}$, and for a verification of all claims regarding these operators so far.

In particular, each $\mcD_i$ annihilates the image of $Φ_{1,1}'$ in the space of $\mfgl_2(\mbC)$-coinvariants, and hence annihilates $\Orb(-,Φ_{1,1}')$. Proposition \ref{prop:diff_op_orb_int_generic} applies and shows that
$$(\mcD_i\vert_\mcQ)\Orb(-,Φ_{1,1}') = 0$$
for all $i = 1,\ldots,4$. The restrictions $\mcD_i\vert_\mcQ$ as elements of $\mathbb C\{ \calQ\} = \mbC\{α, β, γ\}$ are calculated by the provided Sage code as well. For example,
\begin{equation}
\label{eq:restriction_abc_D_12}
\begin{aligned}
\mcD_1\vert_\mcQ & = β^2 \partial_γ^2 - 4 γ \partial_α\partial_γ - 4 α \partial_α^2 - 6\partial_α + 4α - 2,\\[1mm]
\mcD_2\vert_\mcQ & = αβ\partial_γ^2 - 2γ\partial_β\partial_γ - β\partial_β^2 - 2\partial_β + β.
\end{aligned}
\end{equation}
Only the principal symbols of the $\mcD_i\vert_\mcQ$ matter for the desired application of Theorem \ref{thm:reg_thm_diff_eqn}. We write $A$, $B$ and $C$ for the polynomial variables corresponding to $\partial_α$, $\partial_β$ and $\partial_γ$ and recall $Δ = αβ^2 + γ^2$ denotes the discriminant. Then the principal symbols are given as follows:
\begin{equation}
\label{eq:princ_symbols}
\begin{aligned}
σ_1 & = β^2C^2 - 4γAC - 4αA^2\\[1mm]
σ_2 & = αβC^2 - 2γBC - βB^2\\[1mm]
σ_3 & = 2αγ^2C^3 + 6αβγBC^2 + 4αβ^2B^2C - 2γ^2B^2C - 2βγB^3\\[1mm]
σ_4 & = -2αΔBC^2 - 2ΔB^3 + βΔAC^2 + 2ΔA^2B.
\end{aligned}
\end{equation}
It remains to check that for all $(α,β,γ)\in \mbR^3$ with $Δ >0$ and $αγ \neq 0$, $A = B = C = 0$ is the only solution to $σ_1 = \ldots = σ_4 = 0$.
We first assume that $β = 0$. Then \eqref{eq:princ_symbols} specializes to
\begin{equation}
\label{eq:princ_symbols_b_zero}
\begin{aligned}
σ'_1 & = - 4γAC - 4αA^2\\[1mm]
σ'_2 & = - 2γBC\\[1mm]
σ'_3 & = 2αγ^2C^3 - 2γ^2B^2C\\[1mm]
σ'_4 & = -2αΔBC^2 - 2ΔB^3 + 2ΔA^2B.
\end{aligned}
\end{equation}
If $C = 0$, then $σ_1' = 0$ implies $A = 0$ and then $σ_4' = 0$ implies $B = 0$. So any non-trivial solution will have $C \neq 0$. But then $σ_2' = 0$ implies $B = 0$ which leads to $σ'_3 \neq 0$. This shows our claim in points of the form $(α, 0, γ)$, so we can henceforth assume $β \neq 0$.

Then $σ_2 = 0$ gives rise to the relation $βB = C( \pm \sqrt{Δ}-γ)$. Substituting this into $β^2σ_3$ gives,  after a short calculation, the equation
\begin{equation}
β^2σ_3 = 4Δ^{3/2}(Δ^{1/2}  \mp γ)C^3.
\end{equation}
The coefficient is always nonzero because $αβγ \neq 0$ implies $Δ^{1/2} \neq |γ|$. So $σ_3 = 0$ implies $C = 0$. Then $σ_1 = 0$ implies $A = 0$, and finally $σ_4 = 0$ gives $B = 0$. That is, $A = B = C = 0$ is the only solution to $σ_1 = \ldots = σ_4 = 0$ for all parameters $(α, β, γ)$ in question, as was to be shown.

\end{proof}

\subsection{Proof of \Cref{thm:main_signature_11}} \label{ss:proof main thm 11}
To conclude the proof of \Cref{thm:main_signature_11}, we define $f \in C^{\infty}(\calQ_\rs(\bbR))$ by
\[
f = \Orb(-,\Phi_{1,1}) +  \frac1{\sqrt 2} \, \Orb(- , \psi_{1,1}).
\]
By \Cref{prop:analytic_GL2,prop:analytic_U,lem:analytic_properties_simple}, the function $f$ satisfies properties (b), (c), and (d) of \Cref{prop:unique_extension}. Moreover, \Cref{prop:beta = 0 match} implies that $f$ is identically zero on the locus $\{ \alpha \gamma \neq 0, \beta = 0\}$. By continuity, this vanishing extends to the locus $\{\beta = 0\} \subset \mathcal Q_{\rs,(1,1)}$.

Thus, by the uniqueness statement of  \Cref{prop:unique_extension} we find that
\[  f \equiv 0 \text{ on } \mathcal Q_{\rs,(1,1)}.\]

It remains to show that $f$ vanishes on $\mathcal Q_{\rs,(2,0)}$ and $\mathcal Q_{\rs,(0,2)}$ as well.  By definition, the unitary orbital integral $\Orb(-, \psi_{1,1})$ vanishes in this region. On the other hand, if $\tau \colon \lie s_2 \to \lie s_2$ is the transposition map $(y,v,w) \mapsto (y^t, w^t, v^t)$, we have
\[
\tau^* \Phi_{1,1} = - \Phi_{1,1}.
\]
The vanishing of $\Orb(-,\Phi_{1,1})$ on $\calQ_{\rs, (2,0)} \cup \calQ_{\rs, (0,2)}$ then follows from \Cref{lem:transposition}.  This concludes the proof.

\bibliographystyle{plain}

\end{document}